\newtheorem{theorem}{Theorem}[section]
\newtheorem{theo}{Theorem}
\newtheorem{lm}[theorem]{Lemma}
\newtheorem{tr}[theorem]{Theorem}
\newtheorem{cor}[theorem]{Corollary}
\newtheorem{rem}[theorem]{Remark}
\newtheorem{pr}[theorem]{Proposition}
\begin{document}
\title{Blocks for general linear supergroup $GL(m|n)$}
\thanks{This publication was made possible by a NPRF award NPRP 6 - 1059 - 1 - 208 from the Qatar National Research Fund 
(a member of The Qatar Foundation). The statements made herein are solely the responsibility of the authors.}
\author{Franti\v sek Marko}
\address{Pennsylvania State University \\ 76 University Drive, \\ Hazleton, PA 18202, \\ USA}
\email{fxm13@psu.edu}
\author{Alexandr N. Zubkov}
\address{Omsk State Polytechnic University, Mira 11, 644050, Russia}
\email{a.zubkov@yahoo.com}
\begin{abstract}
We prove the linkage principle and describe blocks of the general linear supergroups $GL(m|n)$ over the ground field $K$ of characteristic $p\neq 2$.
\end{abstract}
\maketitle

\section*{Introduction}

One of the important problems in representation theory is to describe simple objects with nontrivial extensions.
For algebraic groups, a necessary condition for the existence of nontrivial extensions of simple modules is given by the linkage principle.
It was conjectured by Verma in \cite{verma} that the highest weights $\mu$ of all simple composition factors of the induced module with highest weight $\lambda$ belong 
to the orbit of $\lambda$ under the dot action of the affine Weyl group. In special cases this conjecture was proved by Jantzen in \cite{jan1} and \cite{jan2}. 
The Verma conjecture and the linkage principle in general were proved by Andersen in \cite{and}.  
Linkage principle is essential for the description of blocks. The complete description of blocks for semisimple algebraic groups was obtained by Donkin in \cite{don}. More details about the linkage principle and blocks of algebraic groups can be found in section II.6. of the book \cite{jan}. 

The goal of our paper is to generalize the linkage principle and to describe blocks of general linear supergroups $G=GL(m|n)$ over perfect fields $K$ of positive characteristic $p\neq 2$. 

The notions of even and odd linkages of weights of $G$ were first introduced in \cite{marko}. Two dominant weights $\lambda$ and $\mu$ of $G$ will be called {\it even-linked} if they belong to the same block of the category of rational $G_{res}=GL(m)\times GL(n)$-modules. Further, $\lambda$ and $\mu$ will be called {\it simply-odd-linked} if there is an odd positive root 
$\alpha$ such that $p|(\lambda+\rho, \alpha)$ and $\mu=\lambda\pm\alpha$, where $\rho$ and the bilinear form $(,)$ are defined in Section \ref{3}.
The following main result of this article was stated as Conjecture 5.10 in \cite{marko}. 
\begin{theo}\label{main}
Dominant weights $\lambda$ and $\mu$ of $G$ belong to the same block of $G$ if and only if there is a chain
of dominant weights $\lambda=\lambda_0, \lambda_1, \ldots, \lambda_t=\mu$ such that for each $i$ the weights $\lambda_i, \lambda_{i+1}$ are either even-linked or simply-odd-linked. 
\end{theo}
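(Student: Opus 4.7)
The statement is a bi-conditional, so the plan is to split the proof into sufficiency (chain-linked weights share a block) and necessity (block-equivalent weights are chain-linked). I would introduce the equivalence relation $\sim$ on dominant weights generated by the two types of linkage and establish that its classes coincide with the blocks of $G$.

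For sufficiency, the two cases are handled separately. If $\lambda$ and $\mu$ are even-linked, I would lift a nontrivial $G_{res}$-extension between the simple $G_{res}$-modules $L_0(\lambda)$ and $L_0(\mu)$ to a nontrivial $G$-extension between $L(\lambda)$ and $L(\mu)$ by applying the induction functor from the parabolic subsupergroup with Levi part $G_{res}$ and odd unipotent radical (the \emph{Kac functor}) and tracking its composition factors. If $\lambda$ and $\mu$ are simply-odd-linked via an odd positive root $\alpha$ with $p \mid (\lambda+\rho,\alpha)$, I would realize $L(\mu)$ as a composition factor of a carefully chosen quotient of the Kac module $K(\lambda)$; the divisibility condition is precisely the numerical condition that prevents $L(\mu)$ from splitting off as a direct summand, thereby forcing a block relation with $L(\lambda)$.

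For necessity, I would show that for every dominant $\lambda$ all composition factors $L(\mu)$ of the induced module $H^0_G(\lambda)$, and hence all simples in the block of $\lambda$, satisfy $\mu \sim \lambda$. Using the character formula $\operatorname{ch} K(\lambda) = \operatorname{ch} L_0(\lambda) \cdot \prod_{\alpha \in \Phi_1^+} (1 + e^{-\alpha})$ for the Kac module, any such $\mu$ is even-linked, via the $G_{res}$-linkage principle, to a weight of the form $\lambda - S$, where $S$ is a sum of distinct odd positive roots. The remaining step is to peel the odd roots off $\lambda$ one at a time, verifying at each intermediate weight that the pair is either simply-odd-linked or even-linked, thereby connecting $\lambda$ to $\lambda - S$ inside one $\sim$-class.

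The principal obstacle is verifying that this peeling procedure respects the divisibility condition $p \mid (\lambda'+\rho, \alpha)$ at every intermediate weight $\lambda'$. An arbitrary ordering of the odd roots to be removed need not preserve this condition, so a delicate reorganization is required, presumably by induction on the cardinality of $S$ combined with a careful use of odd reflections and the isotropy of odd roots with respect to the bilinear form $(,)$. A secondary difficulty is bookkeeping the super-parity of extensions: one must ensure that the $G_{res}$-extensions that are lifted in the sufficiency step actually produce nonsplit $G$-extensions rather than being annihilated by the action of the odd part of the distribution algebra.
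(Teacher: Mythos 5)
Your overall architecture matches the paper's: split the biconditional, handle sufficiency separately for even-linked and simply-odd-linked pairs, and derive necessity from composition factors of induced modules. Your sufficiency sketch for the simply-odd-linked case is in the right spirit: the paper realizes $L(\mu)$ as a composition factor of $H^0(\lambda)$ by exhibiting a $G_{ev}$-primitive vector $v_\alpha$ of weight $\mu=\lambda-\alpha$ on the first floor $F_1$ and using the identity $\phi_1(v_\alpha)=(\lambda+\rho,\alpha)v_\alpha=0$; the divisibility condition forces $v_\alpha\in\ker\phi_1$, hence $L_{ev}(\mu)$ appears in $\operatorname{coker}\phi_1$, and Lemma \ref{ql3} then places $L(\mu)$ inside $\langle F_0,F_1\rangle_G\subseteq H^0(\lambda)$. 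Since $H^0(\lambda)$ has simple socle, being a composition factor already puts $\mu$ in $B(\lambda)$; no ``prevented direct summand'' mechanism is needed. For the even-linked case the paper simply cites Lemma 5.3 of the earlier work of Marko, so your Kac-functor lifting of extensions is extra (and unverified) work, though not wrong in intent.

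The genuine gap is in your necessity argument, and you have located it yourself: the ``peeling'' step. Knowing from the character of $H^0(\lambda)$ that $\mu$ is $G_{res}$-linked to some $\lambda-S$ with $S$ a sum of distinct odd positive roots gives no control over the conditions $p\mid(\lambda'+\rho,\alpha)$ at the intermediate weights; without them the chain is not made of simple-odd-links and the argument proves nothing. No reordering of the summands of $S$ can repair this, because the conditions can simply fail for weights that occur only at the character level (in a typical block $H^0(\lambda)$ is irreducible even though its character contains many weights $\lambda-S$). The paper's solution is structural, not combinatorial: it proves a strong linkage principle for $G_rT$ by running through the chain of adjacent Borel supersubgroups $B^-_{y_0},\dots,B^-_{y_{mn+N}}$, one simple root changed at a time, producing homomorphisms $f_i:\mathcal{Z}'_{r,y_i}(\lambda\langle y_i\rangle)\to\mathcal{Z}'_{r,y_{i-1}}(\lambda\langle y_{i-1}\rangle)$ whose kernels at the odd steps are nonzero exactly when $p\mid(\lambda+\rho,\alpha_i)$ and are then built from $\mathcal{Z}'_r(\lambda-\alpha_i)$ (Lemma \ref{f_i}, Proposition \ref{auxiliaryprop}); the divisibility hypothesis is thus generated one root at a time rather than verified after the fact, and the result is transported to $G$ by a limit over Frobenius kernels (Lemma \ref{Verma}, Proposition \ref{stronglinkageforGL}). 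A second omission: the intermediate weights so produced need not be dominant (Remark \ref{notdominant}), whereas the theorem requires a chain of dominant weights; the paper repairs this with the companion construction (Lemmas \ref{companion}--\ref{oddlinked}), which your proposal does not address.
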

Blocks of $G$, in the case when the characteristic of the ground field is zero, are described as follows. First of all, the category of rational $G$-supermodules can be identified with the category of {\it integrable} supermodules over its distribution superalgebra $Dist(G)$ (cf. \cite{brunkuj}, Corollary 3.5). Since, over a field of characteristic zero, $Dist(G)$ is isomorphic to the universal enveloping superalgebra of Lie superalgebra $Lie(G)=gl(m|n)$ of $G$ (cf. \cite{zub3}, Lemma 3.1), one can work in the category of finite-dimensional $Lie(G)$-supermodules.

According to an unpublished work of Vera Serganova (see \cite{verserg}, Theorem 3.7) all central blocks of classical Lie superalgebras are indecomposable. 
Therefore, all central blocks of classical Lie superalgebras coincide with regular blocks defined using extensions of simple modules. Proposition 3.1 from \cite{verserg} completes a description of blocks (see also \cite{chengwang}, Theorem 2.27). This description can be derived also from the description of multiplicities of irreducible supermodules in Kac's supermodules, which was given in \cite{grusserg}. A special case, when the degree of atypicality of a block over special linear Lie superalgebra is $1$, was considered by Jerome Germoni (see \cite{germ}).

The article is organized as follows. In the first two sections we prove auxiliary results that will be used later to superize some classical results. 
%from sections II.3 and II.9 of \cite{jan} to obtain statements about representations of supersubgroups $G_r, G_r T$ and $G_r B_w^{\pm}$ of $G$.
For example, Proposition \ref{alifttoGH'} and Proposition \ref{thesameforGH'} superize Proposition I.8.20 of \cite{jan} under an additional assumption that the finite supergroup $G$ is infinitesimal. If $G$, as a right $H$-superscheme, is split as $G=W\times H$ and $G$ is infinitesimal, then using Remark \ref{asuffcond} we can prove a stronger statement that
the $G$-supermodule structure of $coind^G_H M|_H$ can be lifted to the $GH'$-supermodule structure of $coind^{GH'}_{H'} M=Dist(GH')\otimes_{Dist(H')} M$ for any 
$H'$-supermodule $M$.

In the third section we discuss positive and negative parts of the root system of $GL(m|n)$ with respect to different choices of Borel supersubgroups. 
In the fourth section, we superize results from sections II.3 and II.9 of \cite{jan} to obtain statements about representations of supersubgroups 
$G_r, G_r T$ and $G_r B_w^{\pm}$ of $G$. The most useful results are Proposition \ref{duality} that superizes Lemma II.9.2 of \cite{jan}, and 
Lemmas \ref{characters}, \ref{hom-s},  \ref{atopof} that superize Lemmas 2, 3, 4 of \cite{sdoty}.

The fifth section is devoted to representations of minimal parabolic supersubgroups. The specific structure of induced supermodules over such supergroups was used in \cite{pen} to prove a superanalog of Borel-Bott-Weil theorem over a field of characteristic zero (see also  \cite{dem, penskor, zub1}). 

In the sixth section we use an analogous idea to modify the celebrated proof of the strong linkage principle for $G_r T$, given in \cite{sdoty}, in a way that also involves odd linkage.
In the seventh section we describe all blocks over $GL(m|n)$. The necessary conditions of Theorem \ref{main} can be derived from the strong linkage principle proved in the previous section.
The sufficient condition can be proved for even-linked and for simply-odd-linked weights separately. The first case has been already proved in \cite{marko}. We prove the second case using an interesting trick involving certain $GL(m)\times GL(n)$-module morphism $\phi_1$ on the first floor $F_1$ of the induced supermodule $H^0(\lambda)$. 

\section{Algebraic supergroups and their representations}

In this section we introduce algebraic supergroups, their basic properties and the concept of blocks. Then we formulate suitable results about actions of supergroups and their
distribution algebras on various objects.

An algebraic supergroup $G$ is a representable functor from the category of supercommutative superalgebras $\mathsf{SAlg}_K$ to the category of groups $\mathsf{Gr}$, i.e.
for every $A\in\mathsf{SAlg}_K$ we have $G(A)=Hom_{\mathsf{SAlg}_K}(K[G], A)$, where $K[G]$ is a finitely generated {\it coordinate superalgebra} of $G$ (cf. \cite{zub4}).

Let $G$ be an algebraic supergroup defined over a perfect field $K$ of characteristic $p\neq 2$. A Hopf superalgebra structure on $K[G]$ is defined by the comultiplication $\Delta_{K[G]}$, the antipode $s_{K[G]}$ and the counit $\epsilon_{K[G]}$. A closed supersubgroup $H$ of $G$ is uniquely defined by a Hopf superideal $I_H$ such that an element $g\in G(A)$ belongs to $H(A)$ if and only if $g(I_H)=0$ for $A\in\mathsf{SAlg}_K$. Therefore, $K[H]\simeq K[G]/I_H$.  For example, the largest even supersubgroup $G_{ev}$ is defined by the Hopf superideal $K[G]K[G]_1$, where $K[G]_1$ is the odd component of $K[G]$, and its coordinate superalgebra $K[G_{ev}]$ is $K[G]/K[G]K[G]_1$. Since $K[G]/K[G]K[G]_1$ is an even Hopf superalgebra, it also represents an algebraic group, that is denoted by $G_{res}$. 

A group of characters of $G$, which will be denoted by $X(G)$, consists of all group-like elements of $K[G]$.

Let $G-\mathsf{SMod}$ (and $\mathsf{SMod}-G$, respectively) denote the category of all (not necessarily finite-dimensional) rational left (right, respectively) supermodules over $G$ considered together with the ungraded homomorphisms between them.
The category $G-\mathsf{SMod}$ coincides with the category of right $K[G]$-supercomodules over the Hopf superalgebra $K[G]$, whose morphisms are $K[G]$-comodule morphisms. If $V\in G-\mathsf{SMod}$, then the corresponding (super)comodule map is denoted by $\tau_V : V\to V\otimes K[G]$.  An analogous setup is valid for $\mathsf{SMod}-G$ 
(see \cite{zub4} for more details). Both categories $G-\mathsf{SMod}$ and $\mathsf{SMod}-G$ are not abelian but their underlying {\it even} categories, consisting of the same objects and even morphisms, are. Denote these abelian categories by $G-\mathsf{Smod}$ and $\mathsf{Smod}-G$, respectively.  

For a given $G$-supermodule $V$ let $\Pi V$ denote the $G$-supermodule that coincides with $V$ as a $K[G]$-comodule but its superspace structure is defined by $(\Pi V)_0=V_1, (\Pi V)_1=V_0$. The functor $V\to\Pi V$ is a self-equivalence of categories $G-\mathsf{SMod}$ and $\mathsf{SMod}-G$ and a self-equivalence of categories $G-\mathsf{Smod}$ and $\mathsf{Smod}-G$ as well. In what follows we shall adhere to Sweedler's notations, i.e. $\tau_V(v)=\sum v_1\otimes f_2$ for $v\in V$. 

Let $K[G]_r$ and $\rho_r$ denote the left $G$-supermodule structure on $K[G]$ induced by the right multiplication, i.e. $\rho_r=\Delta_{K[G]}$. Symmetrically, let $K[G]_l$ and $\rho_l$ denote the left $G$-supermodule structure on $K[G]$ induced by the left multiplication, i.e. $\rho_l(f)=\sum (-1)^{|f_1||f_2|} f_2\otimes s_{K[G]}(f_1)$, where $\Delta_{K[G]}(f)=\sum f_1\otimes f_2, f\in K[G]$. Then $s_{K[G]}$ maps isomorphically $K[G]_r$ onto $K[G]_l$. 

Let $X$ be a supergroup on which $G$ acts by supergroup automorphisms on the right (cf. \cite{zubgrish}, \S 6).
This is equivalent to $K[X]$ being a right $K[G]$-supercomodule such that its (super)comodule map $\tau_{[X]} : K[X]\to K[X]\otimes K[G]$, given by 
$f\mapsto\sum f_{(1)}\otimes f_{(2)}$, is a superalgebra morphism that satisfies the following two identities:
\begin{equation}\label{1}
\sum (-1)^{|(f_1)_{(2)}||(f_2)_{(1)}|}(f_1)_{(1)}\otimes (f_2)_{(1)}\otimes (f_1)_{(2)}(f_2)_{(2)}=\sum (f_{(1)})_1\otimes (f_{(1)})_2\otimes f_{(2)}
\end{equation}
and
\[\sum\epsilon_{K[X]}(f_{(1)})f_{(2)}=\epsilon_{K[X]}(f).\]

Since this action preserves the unit element, one can define a $G$-supermodule structure on the superspace $Dist(X)$ (see \cite{zubfr}, Lemma 5.5). 
Moreover, if we define a pairing
\[(Dist(X)\otimes A)\times (K[X]\otimes A)\to A\] by the rule 
\[(\phi\otimes a)(f\otimes b)=(-1)^{|a||f|}\phi(f)ab\]
for $\phi\in Dist(X), f\in K[X]$ and $a, b\in A$, then 
\[g\cdot(\phi\otimes a)(f\otimes b)=(\phi\otimes a)(g^{-1}(f\otimes b))\]
for $g\in G(A)$. 
\begin{lm}\label{inducedactiononDist}
For every $A\in\mathsf{SAlg}_K$ the group $G(A)$ acts on $Dist(X)\otimes A$ by superalgebra automorphisms. 
\end{lm}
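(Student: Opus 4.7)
The plan is to reduce the statement to its dual on $K[X]\otimes A$, exploiting the assumption that $G$ acts on $X$ by supergroup automorphisms. The argument is essentially formal dualization through the given pairing.

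First, I would unpack what the hypothesis says about a fixed $g\in G(A)$ acting on $K[X]\otimes A$. Because $\tau_{[X]}$ is a superalgebra morphism, $g$ already acts as an $A$-superalgebra automorphism of $K[X]\otimes A$. The identity (\ref{1}) says precisely that this automorphism commutes with $\Delta_{K[X]}$ extended $A$-linearly, i.e., is a coalgebra morphism over $A$; the counit identity says it fixes $\epsilon_{K[X]}\otimes\mathrm{id}_A$. In sum, the hypothesis amounts to $g$ acting as an automorphism of Hopf $A$-superalgebras on $K[X]\otimes A$.

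Second, I would recall that the superalgebra structure on $Dist(X)$ is dual to the coalgebra structure of $K[X]$ via evaluation: the product is characterized by $(\phi\psi)(f)=\sum(-1)^{|\psi||f_1|}\phi(f_1)\psi(f_2)$ with $\Delta_{K[X]}(f)=\sum f_1\otimes f_2$, and the unit is $\epsilon_{K[X]}$. Extending $A$-linearly via the pairing, the product and unit on $Dist(X)\otimes A$ are likewise dual to the coalgebra structure on $K[X]\otimes A$. To verify that $g\cdot$ is a superalgebra morphism, it then suffices to pair both sides of the two required identities against arbitrary $f\otimes c\in K[X]\otimes A$. For multiplicativity, the defining relation of the action yields $(g\cdot(\Phi\Psi))(f\otimes c)=(\Phi\Psi)(g^{-1}(f\otimes c))$; applying the dual description of the product and the fact that $g^{-1}$ commutes with $\Delta_{K[X]}\otimes\mathrm{id}_A$ rewrites this as $\sum\pm\Phi(g^{-1}(f_1\otimes 1))\Psi(g^{-1}(f_2\otimes c))=((g\cdot\Phi)(g\cdot\Psi))(f\otimes c)$. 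Non-degeneracy of the pairing on the $Dist(X)\otimes A$ side, which is standard from the characterization of $Dist(X)$ as the continuous dual of $K[X]$ with respect to powers of the augmentation ideal, then gives the desired equality. Preservation of the unit reduces, by the same dualization, to the counit identity in the hypothesis.

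The principal obstacle will be the careful bookkeeping of Koszul signs when invoking (\ref{1}) to rewrite $(\Delta_{K[X]}\otimes\mathrm{id}_A)(g^{-1}(f\otimes c))$ and aligning the result with the sign convention in the definition of the product on $Dist(X)\otimes A$ coming from the pairing $(\phi\otimes a)(f\otimes b)=(-1)^{|a||f|}\phi(f)ab$. Once the signs are pinned down, the identity is automatic, and the lemma follows at once; the $A$-linearity of the action is immediate from its defining formula.
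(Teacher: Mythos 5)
Your proposal is correct and follows essentially the same route as the paper: the paper's proof likewise equips $K[X]\otimes A$ with the $A$-supercoalgebra structure, characterizes the product on $Dist(X)\otimes A$ by the convolution formula $\phi\psi(f)=\sum(-1)^{|\psi||f_1|}\phi(f_1)\psi(f_2)$, and then deduces multiplicativity of the $G(A)$-action from identity (\ref{1}) by a direct computation. Your additional remarks on non-degeneracy of the pairing and on the counit just make explicit what the paper leaves as "straightforward computations."
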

\begin{proof}
One can define an $A$-supercoalgebra structure on $K[X]\otimes A$ by 
\[\Delta_{K[X]\otimes A}(f\otimes b)=\sum f_1\otimes 1\otimes f_2\otimes b \text{ and } \epsilon_{K[X]\otimes A}(f\otimes b)=\epsilon_{K[X]}(f)b.\]
Then, for arbitrary elements $\phi, \psi\in Dist(X)\otimes A$ and $f\in K[X]\otimes A$ there is the following formula
\[\phi\psi(f)=\sum(-1)^{|\psi||f_1|}\phi(f_1)\psi(f_2),\]
where as above, $\Delta_{K[X]\otimes A}(f)=\sum f_1\otimes f_2$. 
Using this formula and the identity $(\ref{1})$ for $\tau_{K[X]}$ one can prove the lemma by straightforward computations. 
\end{proof}
Let $Ad$ denote the left adjoint action of $G$ on $Dist(G)$, induced by the right conjugation action of $G$ on itself (cf. \cite{zubfr}, Lemma 5.5).
\begin{lm}\label{apropertyofadj}
For every $G$-supermodule $N$ we have \[x(\phi n)=(Ad(x)\phi)(xn),\] where $x\in G(A), \phi\in Dist(G), n\in N$ and $A\in\mathsf{SAlg}_K$.
\end{lm}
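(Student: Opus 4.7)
The asserted equality is the super-analog of the classical compatibility $x(\phi\cdot n)=(Ad(x)\phi)(x\cdot n)$ between the $G$-action and the $Dist(G)$-action on $N$, which informally expresses $Ad(x)\phi=x\phi x^{-1}$. My plan is to unwind the definitions in Sweedler notation and reduce the statement to coassociativity of the comodule map $\tau_N$ together with the antipode axiom of $K[G]$.

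Concretely, using the paper's convention $\tau_N(n)=\sum n_1\otimes f_2$, the action of $\phi$ on $n$ takes the form $\phi n=\sum(-1)^{|\phi||n_1|}\phi(f_2)n_1$, while the action of $x\in G(A)$ on $n$, extended $A$-linearly to $N\otimes A$, is $xn=\sum(-1)^{|x||n_1|}n_1\otimes x(f_2)$. The adjoint action $Ad$ is dual to the conjugation coaction $\kappa:K[G]\to K[G]\otimes K[G]$ induced by right conjugation $G\to G$, $g\mapsto x^{-1}gx$; using the iterated comultiplication $\Delta^{(2)}(h)=\sum h_1\otimes h_2\otimes h_3$ for $h\in K[G]$, this coaction has the Sweedler form $\kappa(h)=\sum\pm\,h_2\otimes s_{K[G]}(h_1)h_3$, so that $Ad(x)\phi\in Dist(G)\otimes A$ satisfies $(Ad(x)\phi)(h)=\sum\pm\,\phi(h_2)\,x(s_{K[G]}(h_1))\,x(h_3)$. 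Substituting these formulas into both sides of the claimed identity and using coassociativity of $\tau_N$ to align the Sweedler expansions, the antipode identity $\sum s_{K[G]}(h_1)h_2=\epsilon_{K[G]}(h)\cdot 1$ collapses the extra $s_{K[G]}$- and $x$-factors that appear on the right-hand side, yielding the same element of $N\otimes A$ as on the left.

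A more conceptual alternative is to apply Lemma \ref{inducedactiononDist} with $X=G$: it supplies the pairing formula $g\cdot(\phi\otimes a)(f\otimes b)=(\phi\otimes a)(g^{-1}(f\otimes b))$ governing the $G(A)$-action on $Dist(G)\otimes A$ relative to $K[G]\otimes A$. Combined with the comodule description of the $G(A)$-action on $N$ through $\tau_N$, the desired identity becomes a short computation. The main obstacle in either route is careful bookkeeping of the Koszul signs that arise when odd tensorands are swapped past one another—the parities of $\phi$, $x$, and of each Sweedler component must all be tracked—but no new conceptual ingredient beyond standard super-Hopf-algebraic manipulation should be required.
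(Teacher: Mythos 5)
Your strategy is exactly the paper's: write both sides in Sweedler notation, use coassociativity of $\tau_N$ to align the iterated expansions, and let the antipode axiom collapse the extra factors. However, your displayed formula for the adjoint action puts the antipode on the wrong Sweedler leg, and this is precisely the load-bearing ingredient. The correct unwinding (the one the paper uses) is
\[(Ad(x)\phi)(f)=\sum \pm\,\phi(f_2)\,x\bigl(f_1\, s_{K[G]}(f_3)\bigr),\]
i.e.\ in convolution terms $Ad(x)\phi = x*\phi*x^{-1}$, whereas your $\sum\pm\,\phi(h_2)\,x(s_{K[G]}(h_1))\,x(h_3)$ is $x^{-1}*\phi*x$. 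The source of the discrepancy is that the $G(A)$-action on the dual space $Dist(G)\otimes A$ is contragredient — by the pairing formula stated before Lemma \ref{inducedactiononDist}, $(x\cdot\phi)(f)=\phi(x^{-1}\cdot f)$ — so even though the conjugation coaction on $K[G]$ is indeed $f\mapsto\sum\pm f_2\otimes s_{K[G]}(f_1)f_3$ as you wrote, passing to $Dist(G)$ replaces $x$ by $x^{-1}$ on the coacting leg and (using $s^2=\mathrm{id}$) flips the antipode to the last factor.

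This matters for closing the computation. Writing $x(\phi n)=\sum\pm\, n_1\otimes x(f_2)\phi(f_3)$ and expanding $(Ad(x)\phi)(xn)$ with the correct formula gives $\sum\pm\, n_1\otimes\phi(f_3)\,x\bigl(f_2\,s_{K[G]}(f_4)\,f_5\bigr)$, and the antipode identity applied to the legs $f_4,f_5$ collapses $s_{K[G]}(f_4)f_5$ to $\epsilon_{K[G]}(f_4)$, recovering the left-hand side. With your version one instead obtains $x\bigl(s_{K[G]}(f_2)\,f_4\,f_5\bigr)$, where the antipode sits on a leg that is separated from its partner by the factor $f_3$ consumed by $\phi$; no cancellation is available and the two sides genuinely differ (your $Ad(x)\phi$ satisfies $x^{-1}(\phi n)=(Ad(x)\phi)(x^{-1}n)$ rather than the asserted identity). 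Once you correct the formula for $Ad$, your argument is the paper's proof.
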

\begin{proof}
Using the following formulas 
\[x(\phi n)=\sum(-1)^{|\phi|(|n_1|+|f_2|)}n_1\otimes x(f_2)\phi(f_3)\]
and
\[Ad(x)\phi(r)=\sum (-1)^{|f_1||f_2|}\phi(f_2)x(f_1 s_G(f_3)).
\]
we derive
\[(Ad(x)\phi)(xn)=\sum (-1)^{|\phi||n_1| +|f_2||f_3|} n_1\otimes\phi(f_3)x(f_2 s_R(f_4) f_5)=x(\phi n).\]
\end{proof}
Let $R$ and $L$ be supersubgroups of $G$ such that $R$ normalizes $L$. By Lemma \ref{inducedactiononDist}, $R$ acts on $Dist(L)$ by superalgebra automorphisms via $Ad|_R$.
\begin{lm}\label{alifting}
Let $V$ be a $R$-supermodule and a $L$-supermodule, and these two structures be related in such a way that 
$x(\phi v)=(Ad(x)\phi)(xv)$, where $x\in R(A), \phi\in Dist(L), v\in V$ and $A\in\mathsf{SAlg}_K$.
If $L$ is connected, then for every $y\in L(A)$ we have $x(yv)=(xyx^{-1})(xv)$, where $xyx^{-1}$ is computed in $G(A)$.
\end{lm}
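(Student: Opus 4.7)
The plan is to rephrase the identity as the equality of two morphisms of $A$-superscheme functors from $L_A$ to $\underline{\mathrm{End}}(V\otimes A)$, and then to exploit the connectedness of $L$. Fix $x\in R(A)$. For every $A$-superalgebra $B$ and every $y\in L(B)$, define two $B$-linear endomorphisms of $V\otimes B$ by
\[
\sigma_1(y)=\rho_R(x_B)\,\rho_L(y)\,\rho_R(x_B)^{-1},\qquad \sigma_2(y)=\rho_L\bigl(x_B y x_B^{-1}\bigr),
\]
where $x_B$ denotes the image of $x$ under $R(A)\to R(B)$ and $\rho_R,\rho_L$ are the given $R$- and $L$-actions on $V$. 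Both are natural in $B$, hence define morphisms $\sigma_1,\sigma_2\colon L_A\to \underline{\mathrm{End}}(V\otimes A)$; unwinding, the target identity $x(yv)=(xyx^{-1})(xv)$ is exactly $\sigma_1(y)(xv)=\sigma_2(y)(xv)$ for $y\in L(A)$ and $v\in V$, which will follow from $\sigma_1=\sigma_2$.

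Next I would show that $\sigma_1$ and $\sigma_2$ induce the same $Dist(L)$-action on $V\otimes A$. Conjugation $y\mapsto xyx^{-1}$ on $L$ differentiates at the identity to $Ad(x)$ on $Dist(L)$, so the infinitesimal value of $\sigma_2$ at $\phi\in Dist(L)$ is $\rho_L(Ad(x)\phi)$, while the infinitesimal value of $\sigma_1$ at $\phi$ is $\rho_R(x)\rho_L(\phi)\rho_R(x)^{-1}$. The hypothesis of the lemma, rewritten as
\[
\rho_R(x)\,\rho_L(\phi)=\rho_L(Ad(x)\phi)\,\rho_R(x),
\]
is exactly the equality of these two. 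Equivalently, $\sigma_1$ and $\sigma_2$ coincide after restriction to each Frobenius kernel $(L_r)_A$.

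Finally, since $L$ is connected, morphisms of $A$-superscheme functors out of $L_A$ are determined by their restrictions to the $(L_r)_A$ — equivalently, the restriction functor from $L$-supermodules to $Dist(L)$-supermodules is faithful. This is the super-analog of \cite{jan}, I.7.17, which fits the framework of Section 1 of the present paper and of \cite{zubfr}. Applying it to $\sigma_1,\sigma_2$ forces $\sigma_1=\sigma_2$, and then specializing to $B=A$ and evaluating at $xv$ yields the conclusion. The main obstacle is precisely this last step: one needs that in the super setting, for connected $L$, two morphisms of superscheme functors out of $L_A$ with identical infinitesimal data necessarily coincide globally. In the classical setting this is routine, and for supergroups it should come down to $K[L]$ being detected by its quotients defining the Frobenius kernels — a point that must be verified carefully with sign conventions and with the odd part of $K[L]$ in mind before the infinitesimal computation above can be turned into a genuine equality on $L(A)$.
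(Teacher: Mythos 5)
Your proposal is correct and follows essentially the same route as the paper: both sides of the desired identity are packaged as elements of $V\otimes A\otimes K[L]$ (your $\sigma_1,\sigma_2$ correspond to the paper's explicit expressions $C$ and $D$), the hypothesis says every $\phi\in Dist(L)$ agrees on them, and connectedness of $L$ gives $\bigcap_{n\geq 1}(K[L]^+)^n=0$, so the two elements coincide and one concludes by evaluating at $y\in L(A)$. The ``main obstacle'' you flag at the end is precisely this last fact, which the paper invokes from \cite{zubgrish} rather than re-proving.
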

\begin{proof}
The structure of the $R$-supermodule on $V$ (and the $L$-supermodule on $V$, respectively) is given by the supercomodule map
$v\mapsto\sum v_1\otimes v_2$ (and $v\mapsto\sum v_{(1)}\otimes v_{(2)}$, respectively).
The first condition of the lemma is equivalent to the identity
\[\sum (-1)^{|v_{(1)}||\phi|}(v_{(1)})_1\otimes x((v_{(1)})_2)\phi(v_{(2)})=\]
\[\sum (-1)^{|\phi||(v_1)_{(1)}|+|(v_1)_{(2)}||(v_1)_{(3)}|} (v_1)_{(1)}\otimes \phi((v_1)_{(3)})
h((v_1)_{(2)})h^{-1}((v_1)_{(4)})h(v_2),
\]
valid for every element $\phi\in Dist(L)$. 
Denote by $C$ the expression
\[\sum (v_{(1)})_1\otimes x((v_{(1)})_2)\otimes v_{(2)}\]
and by $D$ the expression
\[\sum (-1)^{|\phi|(|(v_1)_{(4)}|+|v_2|)} (v_1)_{(1)}\otimes h((v_1)_{(2)})h^{-1}((v_1)_{(4)})h(v_2)\otimes (v_1)_{(3)}.
\]
If $L$ is connected, then $\cap_{n\geq 1}(K[L]^+)^n =0$ (cf. \cite{zubgrish}). 
Using $(id_V\otimes id_A\otimes \phi)(v\otimes a \otimes f)=(-1)^{|\phi|(|a|+|f|)}v\otimes a\otimes\phi(f)$
we derive
\[C-D\in\cap_{\phi\in Dist(L)}\ker (id_V\otimes id_A\otimes \phi)\subseteq V\otimes A\otimes\cap_{n\geq 1}(K[L]^+)^n =0,\]
and therefore $C=D$. 
After we apply $id_V\otimes id_A\otimes y$, where $y\in L(A)$, we obtain the second identity. 
\end{proof}
There is an equivalence of categories $G-\mathsf{SMod}\simeq \mathsf{SMod}-G$ given by $V\mapsto V^{\circ}$, where $V^{\circ}$ coincides with $V$ as a superspace and the structure of a left $K[G]$-supercomodule on $V^{\circ}$ is defined by \[\tau_{V^{\circ}}(v)=\sum (-1)^{|v_1||f_2|} s_{K[G]}(f_2)\otimes v_1 ,\] provided $\tau_V(v)=\sum v_1\otimes f_2$ for $v\in V$. In other words, an element $g\in G(A)$ for $A\in\mathsf{SAlg}_K$ acts on $V^{\circ}$ by $(1\otimes v)g=\sum g^{-1}(f_2)\otimes v_1$ for $v\in V$. 

As a special case, we have \[\rho_r^{\circ}(f)=\sum (-1)^{|f_1||f_2|} s_{K[G]}(f_2)\otimes f_1\] for $f\in K[G]$ and $\rho_l^{\circ}=\Delta_{K[G]}$. As above, $s_{K[G]}$ maps isomorphically $K[G]^{\circ}_r$ onto $K[G]_l^{\circ}$.

Every right $G$-supermodule $V$ is also a right $Dist(G)$-supermodule via $v\phi=\sum (-1)^{|\phi||v_1|}\phi(f_2)v_1$, provided $\tau_V (v)=\sum f_2\otimes v_1$ for $v\in V$.

For every $V\in \mathsf{SMod}-G$ and $A\in\mathsf{SAlg}_K$ one can define the pairing
\[(A\otimes V)\times (A\otimes V^*)\to A\]
by
\[(a\otimes v)(b\otimes\phi)\mapsto (-1)^{|v||b|}ab(v)\phi,\] 
where $a, b\in A, v\in V$ and $\phi\in V^*$.
Then
\[(v')(\phi' g)=(v'g^{-1})\phi',\]
for $v'\in A\otimes V, \phi'\in A\otimes V^*$ and $g\in G(A)$.

If $W$ is a right $Dist(G)$-supermodule, then $A\otimes W$ is a right $A\otimes Dist(G)$-supermodule via 
\[(a\otimes w)(b\otimes\psi)=(-1)^{|w||b|}ab\otimes w\psi.\]
Note that an analogous statement is valid if we replace $Dist(G)$ by any associative superalgebra.
Then as above, we have
\[(v')(\phi'\psi')=(-1)^{|\phi'||\psi'|}(v' s_{Dist(G)}(\psi'))\phi',\]
where $\psi'\in A\otimes Dist(G)$ and $s_{Dist(G)}(b\otimes\psi)=b\otimes s_{Dist(G)}(\psi)$. 
The left-hand-side counterparts of these statements have been proved in \cite{zubfr}. 
\begin{rem}
If $V$ is a finite-dimensional left/right $G$-supermodule, then the natural isomophism $V\to (V^*)^*$ is given by $v\mapsto\overline{v}$, where $\overline{v}(\phi)=(-1)^{|v|}\phi(v)$, 
$v\in V$ and $\phi\in V^*$.\end{rem}
Let $V$ be a $G$-supermodule and $L$ be an irreducible $G$-supermodule. Denote by $[V : L]$ the sum of multiplicities of $L$ and $\Pi L$ in the composition series of $V$.

Two irreducible $G$-supermodules $L$ and $L'$ are linked if and only if there is a chain of irreducible $G$-supermodules $L=L_0, L_1, \ldots , L_t=L'$ such that for each $0\leq i\leq t-1$ either $Ext^1_G(L_i, L_{i+1})\neq 0$ or $Ext^1_G(L_{i+1}, L_i)\neq 0$. A block $B$ of $G$ consists of all irreducible $G$-supermodules that are linked. 
For all $G$-supermodules $V$ and $W$ and every $i\geq 0$ and $a, b\in\mathbb{Z}_2$ there is an isomorphism (cf. \cite{zub4})
\[Ext^i_G(\Pi^a V, \Pi^b W)\simeq \Pi^{a+b} Ext^i_G(V, W).\]
Therefore every block is invariant under the parity shift functor.

We say that a supermodule $M$ belongs to the block $B$ if all composition factors of $M$ belong to $B$. 
Using this definition, we will show that if $G$-supermodules $M$ and $N$ belong to different blocks, then $Hom_G(M, N)=0$. 
If we assume that $Hom_G(M, N)\neq 0$, then either $Hom_{G-\mathsf{Smod}}(M, N)=Hom_G(M, N)_0\neq 0$ or 
$Hom_{G-\mathsf{Smod}}(\Pi M, N)=\Pi Hom_G(M, N)_1\neq 0$ showing that $M$ and $N$ belong to the same block.
The analogous statement is clearly valid for right $G$-supermodules. 

Standard arguments (cf. \cite{jan}, II.7.1) show that every $G$-supermodule $N$ can be decomposed as 
$N=\oplus N_{B}$, where the summation is over all blocks $B$ and each $N_{B}$ is the largest $G$-supersubmodule of $N$ that belongs to $B$. 
Since there are no morphisms or higher extensions between supermodules belonging to different blocks,  the category $G-\mathsf{SMod}$ is a direct sum of its full subcategories $G-\mathsf{SMod}_{B}$ for different blocks $B$. For example, if two irreducible supermodules $L$ and $L'$ are 
composition factors of an indecomposable supermodule $V$, then they belong to the same block.

\section{Finite supergroups}

A partial case of the MacKay imprimitivity theorem from \cite{zub1} states the following. 
Let $G'$ be an algebraic supergroup, $G$ be a normal supersubgroup of $G'$, $H'$ be a supersubgroup of $G'$ and $H=G\cap H'$. Then 
$(ind^{GH'}_{H'} M)|_G \simeq ind^G_H M|_H$ as $G$-supermodules.
Using this isomorphism, we can lift $G$-supermodule structure on $ind^G_H M|_H$ to a $GH'$-supermodule structure.

In this section, for infinitesimal supergroup $G$, we lift the $G$-supermodule structure on $coind^G_H M|_H$ to a $GH'$-supermodule structure in such a way that 
\[(coind^{GH'}_{H'} M)|_G \simeq coind^G_H M|_H\] as $G$-supermodules. Also, we prove results about duals of induced and coinduced supermodules which will be utilized later. 

Let $H$ be an infinitesimal supergroup. It has been shown in \cite{zubfr} that the supersubspace $\int_{{\bf r}, Dist(H)}$ of $Dist(H)=K[H]^*$,
consisting of all right integrals on $Dist(H)$, is one-dimensional. 
It follows from Lemma 5.5 of \cite{zubfr} that if $H$ is a normal supersubgroup of an algebraic supergroup $R$, then
$\int_{{\bf r}, Dist(H)}$ is a $R$-supersubmodule of $Dist(R)$ with respect to the adjoint action of $R$ on $Dist(R)$. 
Therefore, $R$ acts on $\int_{{\bf r}, Dist(H)}$ via a character $\chi_r\in X(R)$.
Analogously, $R$ acts on the superspace of left integrals $\int_{{\bf l}, Dist(H)}$ via a character $\chi_l$.

\begin{rem}\label{coincidenceofcharacters}
If $H$ is infinitesimal, then $\chi_r=\chi_l^{-1}$ and the parities of $\int_{{\bf l}, Dist(H)}$ and $\int_{{\bf r}, Dist(H)}$ are the same. Since $Ad(h)(s_{Dist(H)}(\mu))=s_{Dist(H)}(Ad(h^{-1})\mu)$ for every $\mu\in Dist(G)$, this follows from Remark 2.4 of \cite{zubfr}.
\end{rem}
\begin{lm}\label{anisomforright}
If $H$ is infinitesimal, then the right $H$-supermodule $(K[H]_r^{\circ})^*$ is isomorphic to $\Pi^a K[H]_r^{\circ}\otimes\chi_l=\Pi^a K[H]_r^{\circ}\otimes\chi_r^{-1}$, 
where $a$ is the parity of $\int_{{\bf l}, Dist(H)}$.
Analogously, $(K[H]_l^{\circ})^*\simeq\Pi^a K[H]_l^{\circ}\otimes\chi_r= \Pi^a K[H]_l^{\circ}\otimes\chi_l^{-1}$.
\end{lm}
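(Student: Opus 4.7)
The idea is to exploit that $K[H]$, being the coordinate superalgebra of an infinitesimal supergroup, is a finite-dimensional Hopf superalgebra and hence a Frobenius superalgebra. A Frobenius form is supplied by any nonzero left integral $t\in\int_{\mathbf{l},Dist(H)}$; write $a$ for its parity (the parity appearing in the statement). Both sides of the desired isomorphism are right $H$-supermodules of superdimension equal to that of $K[H]$, so it suffices to construct an explicit right $H$-equivariant map between them and check that it is bijective on underlying superspaces.

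The natural candidate is $\Phi\colon \Pi^a K[H]_r^\circ\otimes\chi_l \to (K[H]_r^\circ)^*$ defined by $\Phi(f)(g)=t(gf)$ (with the appropriate Koszul signs). That $\Phi$ is a linear isomorphism is precisely the Frobenius property of the integral on a finite-dimensional Hopf superalgebra, already recorded in \cite{zubfr}: the pairing $(f,g)\mapsto t(gf)$ is associative and nondegenerate. The shift $\Pi^a$ is forced because $|t|=a$, so $\Phi$ itself has parity $a$.

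The heart of the argument is the equivariance check. For $h\in H(A)$, I would expand $(\Phi(f)\cdot h)(g)$ using the dual right action on $(K[H]_r^\circ)^*$ and compare with $\Phi(f\cdot h)(g)$, where the right action on $K[H]_r^\circ$ is read off from the $\circ$-functor formula $\tau_{V^\circ}(v)=\sum(-1)^{|v_1||f_2|}s_{K[H]}(f_2)\otimes v_1$. After rewriting with the comultiplication and the antipode identities, the discrepancy between the two expressions reduces to the adjoint action of $h$ on $t$ inside $Dist(H)$. By the very definition of $\chi_l$ stated immediately before the lemma, $Ad(h)t=\chi_l(h)t$. This is exactly the scalar needed to make $\Phi$ right $H$-equivariant once the source has been twisted by $\chi_l$, yielding the first form of the isomorphism. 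The alternative form with $\chi_r^{-1}$ is then immediate from Remark \ref{coincidenceofcharacters}.

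For the analogous isomorphism involving $K[H]_l^\circ$, one can either repeat the argument with a nonzero right integral in place of $t$, which interchanges $\chi_l$ and $\chi_r$ in the computation, or transport the first isomorphism along the antipode $s_{K[H]}\colon K[H]_r^\circ\xrightarrow{\sim}K[H]_l^\circ$ noted in the excerpt and use that $s_{Dist(H)}$ exchanges left and right integrals (again Remark \ref{coincidenceofcharacters}). The main technical obstacle I anticipate is sign bookkeeping: the $\circ$-functor, the multiplication on $Dist(H)$, and the pairing $t(gf)$ each introduce Koszul signs, and pinning down that the character appearing is precisely $\chi_l$ (rather than $\chi_r$ or an inverse) requires threading all these signs carefully through the equivariance calculation.
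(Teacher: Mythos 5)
Your proposal follows essentially the same route as the paper's proof: the paper also takes a nonzero left integral $\nu\in\int_{{\bf l},Dist(H)}$, uses the Frobenius-type bijection $f\mapsto\nu f$, $(\nu f)(f')=\nu(ff')$, from $\Pi^a K[H]$ onto $K[H]^*=Dist(H)$ (Remark 2.4 of \cite{zubfr}), and then performs exactly the equivariance computation you describe, with the twist by $\chi_l$ emerging from $Ad(h^{-1})\nu=\chi_l^{-1}(h)\nu$. The second isomorphism is likewise handled as ``analogous,'' so your plan is a faithful match up to the sign and ordering conventions you already flag.
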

\begin{proof}
Denote by $Int$ the left action of $R$ on $K[R]$, induced by the right conjugation action of $R$ on itself.
By Remark 2.4 of \cite{zubfr}, there is an isomorphism $\Pi^{|\nu|}K[H]\to K[H]^*=Dist(H)$ of left $H$-supermodules, given by 
$f\mapsto \nu f$, where $\nu\in \int_{{\bf l}, Dist(H)}\setminus 0$. 
Recall that $\nu\in (K[H]_l^*)^H$ and $(\nu f)(f')=\nu(f f')$. 
For the sake of simplicity, for $h\in H$ and $f\in K[H]$, denote $\rho_l(h)f$ and $f\rho_r^{\circ}(h)$ by $hf$ and $fh$, respectively. 
Then
\[((\nu f)h)(f')=\nu(f(f' h^{-1}))=\nu(Int(h)(Int(h^{-1})(f) h^{-1}f'))=\]
\[(Ad(h^{-1})\nu)(Int(h^{-1})(f) h^{-1}f')=\nu(Int(h^{-1})(f) h^{-1}f'))\chi_l^{-1}(h)=
\]
\[\nu(h^{-1}((fh)f'))\chi_l^{-1}(h)=(h\nu)((fh)f')\chi_l^{-1}(h)=\nu(fh) (f')\chi_l^{-1}(h),\]
which proves $(K[H]_r^{\circ})^*\simeq \Pi^a K[H]_r^{\circ}\otimes\chi_l$.
The proof of the second statement is analogous.
\end{proof}
\begin{rem}\label{interpretation}
Let $G'$ be an algebraic supergroup, $G$ be its normal supersubgroup, $H'$ be a supersubgroup of $G'$ and $H=H'\cap G$. 
According to Theorem 10.1 of \cite{zub1}, for every $H'$-supermodule $M$ there is a superspace isomorphism $ind^{GH'}_{H'} M\to ind^G_H M|_H$ 
that naturally induces a $GH'$-supermodule structure on $ind^G_H M|_H$ such that its restriction to $G$ induces an isomorphism 
$(ind^{GH'}_{H'} M)|_G\to ind^G_H M|_H$ of $G$-supermodules. 
More precisely, one can  interpret the $G$-supermodule $ind^G_H M|_H$ as a $G$-subfunctor of $\mathfrak{Mor}(G, M_a)$ (cf. \cite{zub1}, \S 8), consisting of all affine superscheme morphisms $\mathfrak{f} : G\to N_a$ such that
$\mathfrak{f}(gh)=h^{-1}\mathfrak{f}(g)$ for $g\in G, h\in H$. A $G$-action on $\mathfrak{Mor}(G, N_a)$ is given by $g\mathfrak{f}(g')=\mathfrak{f}(g^{-1}g')$. Then the above $H'$-action on $ind^G_H M|_H$ is defined by
$(h'*\mathfrak{f})(g)=h'\mathfrak{f}(h'^{-1}gh')$.
\end{rem}
\begin{lm}\label{adecompofdist}
Let $G'$ be an algebraic supergroup, $G$ be its normal supersubgroup, and $H'$ be a supersubgroup of $G'$. Then
$Dist(GH')=Dist(G)Dist(H')$.
\end{lm}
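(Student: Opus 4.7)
The plan is to realize the convolution product $Dist(G)\otimes Dist(H')\to Dist(GH')$ as the dual of the comorphism of the multiplication morphism $m:G\times H'\to GH'$, and then to exploit scheme-theoretic surjectivity of $m$.

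First I would observe that both $Dist(G)$ and $Dist(H')$ embed naturally as subsuperalgebras of $Dist(GH')$, so their product $Dist(G)\cdot Dist(H')$ taken under the convolution multiplication inherited from the Hopf superalgebra $K[GH']$ lies inside $Dist(GH')$. For $\phi\in Dist(G)$, $\psi\in Dist(H')$, and $f\in K[GH']$ with $\Delta_{K[GH']}(f)=\sum f_{(1)}\otimes f_{(2)}$, one computes
\[
(\phi\psi)(f)=\sum\phi(f_{(1)})\psi(f_{(2)})=\sum\phi(f_{(1)}|_G)\psi(f_{(2)}|_{H'}),
\]
the second equality holding because $\phi$ (resp.\ $\psi$) factors through the quotient $K[GH']\twoheadrightarrow K[G]$ (resp.\ $K[GH']\twoheadrightarrow K[H']$). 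This is exactly the pairing of $\phi\otimes\psi$ against the comorphism $m^{*}:K[GH']\to K[G]\otimes K[H']$.

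Next I would argue that $m$ is scheme-theoretically surjective, equivalently that $m^{*}$ is injective. Normality of $G$ in $G'$ is precisely what ensures the product subfunctor $A\mapsto G(A)H'(A)$ is represented by a closed subsupergroup of $G'$, which is by definition $GH'$; consequently $m$ maps $G\times H'$ onto $GH'$ as superschemes. Passing to augmentation-ideal-completed local superrings at the identity, $m^{*}$ induces an injection
\[
\widehat{K[GH']}_e\hookrightarrow\widehat{K[G]}_e\mathbin{\widehat{\otimes}}_K\widehat{K[H']}_e.
\]
The distribution algebra of a supergroup is the (super)dual of its completed local superring at the identity, and on finite-codimension pieces of the augmentation filtration this duality is a duality of finite-dimensional superspaces that converts completed tensor products into ordinary ones. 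Dualizing therefore turns the above injection into a surjection
\[
Dist(G)\otimes Dist(H')\twoheadrightarrow Dist(GH'),
\]
which by the first step coincides with convolution. This yields $Dist(GH')=Dist(G)Dist(H')$.

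The main obstacle is the second step: rigorously justifying scheme-theoretic surjectivity of $m$ from normality of $G$ alone. One must confirm that $G\cdot H'$ is represented by the closed supersubscheme $GH'$ and that the comorphism $m^{*}$ is injective rather than merely generically so. In the classical non-super setting this is standard via fppf descent, and the super analogue requires carrying this argument through while respecting the $\mathbb{Z}_2$-grading on coordinate superalgebras throughout.
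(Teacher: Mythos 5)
Your overall strategy is the right one and is close in spirit to the paper's: both arguments come down to showing that the multiplication morphism $G\times H'\to GH'$ dualizes to a surjection $Dist(G)\otimes Dist(H')\to Dist(GH')$. (A cosmetic point first: your convolution formula is missing the Koszul sign; it should read $(\phi\psi)(f)=\sum(-1)^{|\psi||f_{(1)}|}\phi(f_{(1)})\psi(f_{(2)})$.) The genuine gap is in the dualization step, not where you flagged it. To conclude surjectivity of $Dist(G)\otimes Dist(H')\to Dist(GH')$ you need, for each $n$, injectivity of the induced map on the finite-dimensional quotients
\[
K[GH']/\mathfrak{m}_{GH'}^{\,n+1}\longrightarrow \bigl(K[G]\otimes K[H']\bigr)/\mathfrak{m}_{G\times H'}^{\,n+1},
\]
i.e.\ strictness of $m^{*}$ with respect to the augmentation filtrations. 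This does \emph{not} follow from injectivity of $m^{*}$ (scheme-theoretic surjectivity of $m$), nor from injectivity of the map of completions: an injective local homomorphism such as $k[x]\hookrightarrow k[t]$, $x\mapsto t^{2}$, already fails to be injective modulo squares of the maximal ideals, and its "distribution dual" is not surjective degreewise. Even for genuine quotient maps of group schemes in positive characteristic (e.g.\ Frobenius on $\mathbb{G}_a$) the degreewise maps fail to be surjective, and surjectivity of the total map on distributions is recovered only by a nontrivial argument using the group structure. So the purely scheme-theoretic input you invoke is insufficient; you must use that $m$ is the quotient homomorphism of supergroups $G\ltimes H'\to (G\ltimes H')/H\simeq GH'$ and appeal to the fact that epimorphisms of algebraic supergroups induce epimorphisms of distribution superalgebras (this is exactly what the paper does, citing the proof of Proposition 9.1 of \cite{zub3}, combined with $Dist(G\ltimes H')=Dist(G)\otimes Dist(H')$). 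With that substitution your argument closes; without it, the passage from "$m^{*}$ injective" to "the convolution map is onto" is unjustified.
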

\begin{proof}
It has been observed in \S 7 of \cite{zub3} that $GH'$ is an epimorphic image of the supergroup
$G\ltimes H'$. By the proof of Proposition 9.1 of \cite{zub3}, there is  a superalgebra epimorphism
$Dist(G\ltimes H')\to Dist(GH')$. Since $Dist(G\ltimes H')=Dist(G)\otimes Dist(H')$, the statement follows from results in \S 4 of \cite{zub1}.
\end{proof}

From now on until the end of this section, we assume that $G$ is an infinitesimal supergroup. 

By Lemma 1.3 from \cite{zub2}, $G-\mathsf{SMod}$ can be naturally identified with the category of left $Dist(G)$-supermodules $_{Dist(G)}\mathsf{SMod}$. For example, $Dist(G)$-supermodule $coind^G_H M=Dist(G)\otimes_{Dist(H)} M$ has a unique structure of a $G$-supermodule such that its induced $Dist(G)$-supermodule structure coincides with the original one. 
Moreover, for every $M, N\in G-\mathsf{SMod}$ (or $\mathsf{SMod}-G$, respectively) we have $Hom_G(M, N)=Hom_{Dist(G)}(M, N)$.
\begin{rem}\label{explanation}
Let $A$ be an associative (not necessary supercommutative) superalgebra. 
For all (left) $A$-supermodules $M$ and $N$ the superspace $Hom_A(M, N)$ is generated by all linear homogeneous maps
$\phi : M\to N$ such that $\phi(am)=(-1)^{|a||\phi|}a\phi(m)$ for $a\in A$ and $m\in M$. 
We would like to take an opportunity to clarify Lemma 4.4 from \cite{zub2} and replace the 
incorrect statement $_{\mathsf{Dist}(H)}\mathsf{SMod}\simeq _{\mathsf{Dist}(H)\rtimes\mathbb{Z}_2}\mathsf{Mod}$ with 
 $_{\mathsf{Dist}(H)}\mathsf{Smod}\simeq _{\mathsf{Dist}(H)\rtimes\mathbb{Z}_2}\mathsf{Mod}$.
\end{rem}
The left $Dist(G)$-supermodule $(K[G]_l)^*$ is isomorphic to $Dist(G)$ as a left $Dist(G)$-supermodule and the right $Dist(G)$-supermodule $(K[G]_r^{\circ})^*$ is isomorphic to $Dist(G)$ as a right $Dist(G)$-supermodule. Since Theorem 0.1 of \cite{zub5} and Theorem 5.2(4) of  \cite{zub3} imply that both $K[G]_l$ and $K[G]_r^{\circ}$ are injective $H$-supermodules, 
$Dist(G)$ is a projective left and right $Dist(H)$-supermodule.  

Let $A$ and $B$ be (not necessarily finite-dimensional) associative superalgebras. 
\begin{lm}\label{anidentity}
For arbitrary left $B$-supermodule $M$ and right projective $B$-supermodule $M'$ there is a natural superspace isomorphism 
\[M'\otimes_B M\simeq Hom_{\mathsf{_{B}SMod}} (Hom_{\mathsf{SMod}_B} (M', B), M). \]
If $M'$ is also a left $A$-supermodule, that is $M'$ is an $A\times B$-superbimodule, then the above map is an isomorphism of $A$-supermodules. 
\end{lm}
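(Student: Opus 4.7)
The plan is to construct the natural evaluation-type map $\Psi : M'\otimes_B M \to Hom_{\mathsf{_{B}SMod}}(Hom_{\mathsf{SMod}_B}(M', B),\, M)$ explicitly, verify it in the base case $M'=B$, and then extend to projective $M'$ by additivity and passage to direct summands. Concretely, I would set
\[\Psi(m'\otimes m)(\phi)=(-1)^{|\phi|(|m'|+|m|)}\phi(m')\,m,\]
with the sign dictated by the requirement that $\Psi(m'\otimes m)$ be a left $B$-linear map of degree $|m'|+|m|$. The first task is to verify that $\Psi$ is well-defined modulo the $B$-tensor relation, that it lands in $Hom_{\mathsf{_{B}SMod}}$, and that it is natural in both $M'$ and $M$; these are routine sign-bookkeeping calculations using the super conventions recalled in Remark \ref{explanation}.

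Next I would verify the base case $M'=B$. Both sides are canonically isomorphic to $M$: on the left via $b\otimes m\mapsto bm$, and on the right via the chain $Hom_{\mathsf{SMod}_B}(B,B)\cong B$ (evaluation at $1$) followed by $Hom_{\mathsf{_{B}SMod}}(B, M)\cong M$ (evaluation at $1$ again). Under these identifications $\Psi$ becomes the identity on $M$. Since both functors $M'\mapsto M'\otimes_B M$ and $M'\mapsto Hom_{\mathsf{_{B}SMod}}(Hom_{\mathsf{SMod}_B}(M', B), M)$ send finite direct sums to finite direct sums, the isomorphism $\Psi$ extends to every $M'=B^n$, and then by naturality to every direct summand of such a free module. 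This is the only step where projectivity is genuinely used, and it is the reason one must work with finitely generated projective $M'$ rather than arbitrary projective $M'$.

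For the bimodule refinement, when $M'$ is an $A\times B$-superbimodule the left $A$-action on $M'$ induces a right $A$-action on $Hom_{\mathsf{SMod}_B}(M', B)$ via $(\phi\cdot a)(m')=(-1)^{|a||\phi|}\phi(am')$, which in turn induces a left $A$-action on $Hom_{\mathsf{_{B}SMod}}(Hom_{\mathsf{SMod}_B}(M', B), M)$; the left $A$-action on $M'\otimes_B M$ is through the first factor. Direct substitution in the formula for $\Psi$ shows it intertwines the two $A$-actions. The main obstacle throughout is purely sign bookkeeping: pinning down the signs so that $B$-balance, left $B$-linearity, and $A$-equivariance all hold simultaneously. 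Once these are in place, the categorical content is just the classical dual-basis identification for finitely generated projective modules, transplanted to the super setting.
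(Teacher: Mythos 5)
Your proposal is correct and follows essentially the same route as the paper: define the evaluation map $m'\otimes m\mapsto(\phi\mapsto \pm\,\phi(m')\,m)$, check the case $M'=B$, extend by compatibility with direct sums and direct summands, and transport the $A$-action through the double dual. Your observation that the functor $M'\mapsto Hom_{\mathsf{_{B}SMod}}(Hom_{\mathsf{SMod}_B}(M',B),M)$ only commutes with \emph{finite} direct sums, so the argument genuinely requires $M'$ finitely generated projective, is a caveat the paper elides; it is harmless in the intended application, where $M'=Dist(G)$ is finite-dimensional because $G$ is infinitesimal.
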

\begin{proof}
The required isomorphism sends $m'\otimes m$ to the map $\phi\mapsto (-1)^{|\phi||m|}((m')\phi) m$ for $\phi\in Hom_{\mathsf{SMod}_B} (M', B)$ (cf. I.8.16(5), \cite{jan}). 
This assignment is obviously a superspace morphism. Since it commutes with direct sums, to show that it is an isomorphism, all we need is to check the case $M'=B$, which is clear.
 
If $M'$ is an $A\times B$-superbimodule, then $Hom_{\mathsf{SMod}_B} (M', B)$ is a $B\times A$-superbimodule 
via \[(m')(b\phi a)=(-1)^{|b||m'|+|a|(|\phi|+|m'|)}b((am')\phi)\] for $a\in A, b\in B$. 
The right $A$-supermodule structure on $Hom_{\mathsf{SMod}_B} (M', B)$ induces a natural left $A$-supermodule structure on   
$Hom_{\mathsf{_{B}SMod}} (Hom_{\mathsf{SMod}_B} (M', B), M)$, which is obviously compatible with the $A$-supermodule structure on $M'\otimes_B M$ given by our isomorphism.
\end{proof}
\begin{cor}\label{forDist} If we set $M'=Dist(G)$, then we obtain an isomorphism of $G$-supermodules
\[coind^G_H M\simeq Hom_{H-\mathsf{SMod}} (Hom_{\mathsf{SMod}-H} (Dist(G), Dist(H)), M).\] 
\end{cor}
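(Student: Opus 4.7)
The plan is to apply Lemma \ref{anidentity} directly with $A=Dist(G)$, $B=Dist(H)$, and $M'=Dist(G)$, viewed as a $Dist(G)\times Dist(H)$-superbimodule via left multiplication by $Dist(G)$ and right multiplication by $Dist(H)\subseteq Dist(G)$. The hypotheses are all already on the table: the paragraph immediately preceding Lemma \ref{anidentity} records that $Dist(G)$ is projective as a right $Dist(H)$-supermodule (this is the key input, derived from the injectivity of $K[G]_r^{\circ}$ as an $H$-supermodule together with Lemma \ref{anisomforright}), and the bimodule structure is obvious. Plugging these into Lemma \ref{anidentity} yields an isomorphism of left $Dist(G)$-supermodules
\[
Dist(G)\otimes_{Dist(H)} M \;\simeq\; Hom_{H-\mathsf{SMod}}\bigl(Hom_{\mathsf{SMod}-H}(Dist(G), Dist(H)), M\bigr).
\]

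Next I would translate this from the language of $Dist(G)$-supermodules to the language of $G$-supermodules. Because $G$ is assumed to be infinitesimal, the excerpt invokes Lemma 1.3 of \cite{zub2} to identify $G-\mathsf{SMod}$ with $_{Dist(G)}\mathsf{SMod}$, and under this identification the left $Dist(G)$-supermodule $Dist(G)\otimes_{Dist(H)} M$ is by definition the coinduced $G$-supermodule $coind^G_H M$. Hence the displayed isomorphism becomes exactly the one asserted in Corollary \ref{forDist}.

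The only real thing to check is that the $A$-action coming out of Lemma \ref{anidentity} really corresponds to the canonical $G$-supermodule structure on $coind^G_H M$, rather than to some twist of it. This is routine: the lemma's construction sends $\phi\otimes m$ to $\Phi\mapsto(-1)^{|\Phi||m|}((\phi)\Phi)m$, and the induced left $A=Dist(G)$-action on the right-hand side is just post-composition with left multiplication by $Dist(G)$ on $M'$, which matches left multiplication by $Dist(G)$ on the first tensor factor of $Dist(G)\otimes_{Dist(H)} M$. I do not anticipate a genuine obstacle here; the main content of the corollary is really the projectivity of $Dist(G)$ over $Dist(H)$, which has already been established, together with a careful application of the general bimodule identity in Lemma \ref{anidentity}.
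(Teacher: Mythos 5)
Your proposal is correct and is essentially the paper's own argument: the corollary is stated precisely as the specialization of Lemma \ref{anidentity} to $A=M'=Dist(G)$, $B=Dist(H)$, with the projectivity of $Dist(G)$ as a right $Dist(H)$-supermodule and the identification $coind^G_H M=Dist(G)\otimes_{Dist(H)}M$ under $G-\mathsf{SMod}\simeq{}_{Dist(G)}\mathsf{SMod}$ already recorded in the preceding paragraphs. Your extra check that the induced $A$-action matches the canonical $G$-supermodule structure is exactly the compatibility built into the second half of Lemma \ref{anidentity}.
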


For the remainder of this section assume that $G'$ be an algebraic supergroup, $G$ be its normal supersubgroup, $H'$ be a supersubgroup of $G'$ and $H=H'\cap G$.
Let $G'$ act on $\int_{{\bf l}, Dist(G)}$ via the character $\alpha'$ and denote $\alpha=\alpha'|_G$. Also, let $H'$ act on $\int_{{\bf r}, Dist(H)}$ via a characer $\chi'$ and 
denote $\chi=\chi'|_H$. Further, denote by $a$ the parity of $\int_{{\bf l}, Dist(H)}$ and by $b$ the parity of $\int_{{\bf l}, Dist(G)}$.

The following proposition superizes Proposition I.8.17 of \cite{jan}. 
\begin{pr}\label{coind and ind}
For every $H$-supermodule $M$ there is an isomorphism 
\[coind^G_H M\simeq ind^G_H(\Pi^{a+b} M\otimes\alpha|_H\chi^{-1}).\]
\end{pr}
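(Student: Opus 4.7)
The plan is to combine Corollary \ref{forDist} with Lemma \ref{anisomforright} applied to both $G$ and $H$. By Corollary \ref{forDist},
\[
coind^G_H M \simeq Hom_{H-\mathsf{SMod}}\bigl(W, M\bigr), \qquad W = Hom_{\mathsf{SMod}-H}(Dist(G), Dist(H)),
\]
as left $G$-supermodules, so the proof reduces to identifying $W$, as a left $G$-supermodule and right $H$-supermodule, with $K[G]$ suitably shifted in parity and twisted by a character, and then using the tautological realization of $ind^G_H(-)$ as $Hom_{H-\mathsf{SMod}}(K[G],-)$ together with the standard adjunction that moves a character twist from the source of a $Hom$ to its target.

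To perform the identification of $W$, first apply Lemma \ref{anisomforright} to $G$ itself, which is legitimate since $G$ is infinitesimal, to get $Dist(G) = (K[G]_r^\circ)^* \simeq \Pi^b K[G]_r^\circ \otimes \alpha$ as right $G$-supermodules, hence by restriction as right $H$-supermodules. Next apply the same lemma to $H$, and use Remark \ref{coincidenceofcharacters} to re-express the left-integral character of $H$ as $\chi^{-1}$, obtaining $Dist(H) \simeq \Pi^a K[H]_r^\circ \otimes \chi^{-1}$ as right $H$-supermodules. Substitute these into the inner $Hom$ defining $W$; the two parity shifts combine to a single $\Pi^{a+b}$, and the two character twists propagate through the inner $Hom$ (and later the outer $Hom_{H-\mathsf{SMod}}(-, M)$) via the standard rule that tensoring the source by a character $\beta$ is equivalent to tensoring the target by $\beta^{-1}$. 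What remains is to identify $Hom_{\mathsf{SMod}-H}(K[G]_r^\circ, K[H]_r^\circ)$ with $K[G]$, which is obtained by post-composing with $\epsilon_{K[H]}$: any right $H$-supercomodule map into $K[H]_r^\circ$ is reconstructed from its composition with the counit through the comultiplication on $K[G]$, and finite-dimensionality of $K[G]$ allows one to read off the left $G$- and right $H$-equivariance.

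The main obstacle is the bookkeeping of parities and character exponents, which has to yield exactly $\Pi^{a+b}$ and $\alpha|_H\chi^{-1}$ rather than any of the nearby variants differing by a parity shift or an inverse. This is controlled entirely by Lemma \ref{anisomforright}, Remark \ref{coincidenceofcharacters}, and the conventions on the functor $V\mapsto V^{\circ}$ fixed in Section 1, so no new technical ingredient is required beyond those already developed; the argument is, in essence, a superization of the Frobenius-extension proof of Proposition I.8.17 of \cite{jan}, with the modular characters replaced by their super counterparts $\alpha$, $\chi$ and with the additional parity shifts $\Pi^a$, $\Pi^b$ coming from the possibly odd integrals on $Dist(H)$ and $Dist(G)$.
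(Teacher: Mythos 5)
Your overall strategy --- Corollary \ref{forDist} combined with Lemma \ref{anisomforright} applied to both $Dist(G)$ and $Dist(H)$ --- is the same as the paper's, but two of the concrete identifications you invoke are false as stated, and each fails by precisely a $\Pi^b\otimes\alpha^{\pm 1}$-type twist, i.e.\ by the very quantity the proposition is computing; they therefore cannot be waved through as routine bookkeeping "controlled by the conventions."

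First, $Hom_{\mathsf{SMod}-H}(K[G]_r^{\circ}, K[H]_r^{\circ})$ is not $K[G]$: post-composition with $\epsilon_{K[H]}$ identifies it, by cofreeness of the regular representation, with $Hom_K(K[G],K)=Dist(G)$, which differs from $K[G]$ by one more application of Lemma \ref{anisomforright}, that is, by $\Pi^b\otimes\alpha$. Second, for infinitesimal $G$ one has $ind^G_H N=(K[G]\otimes N)^H\simeq Hom_{H-\mathsf{SMod}}(Dist(G),N)$, not $Hom_{H-\mathsf{SMod}}(K[G],N)$; the latter is, up to the same twist, the coinduced module you started from, so treating that realization as "tautological" is circular. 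These two slips are inverse to one another and might cancel, but since you also introduce a $\Pi^b\otimes\alpha$ at the outset by rewriting the source $Dist(G)$ of the inner $Hom$, the net contribution of $b$ and $\alpha$ in your sketch is not under control, and the whole content of the statement is exactly this accounting. The paper's proof sidesteps the problem: it never dualizes the source, using instead the canonical isomorphism $Hom_K(Dist(G),Dist(H))\simeq K[G]\otimes Dist(H)$ and rewriting only the target via Lemma \ref{anisomforright}, which yields $V\simeq\Pi^a K[G]_r^{\circ}\otimes\chi^{-1}$; the factor $\Pi^b\otimes\alpha$ then enters exactly once, at the last step, when $\Pi^a(K[G]_r^*\otimes\chi^{-1}\otimes M)^H$ is converted into $\Pi^{a+b}\,ind^G_H(M\otimes\alpha|_H\chi^{-1})$ using Corollary 2.3 of \cite{zubfr}. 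To repair your argument you would need to replace both faulty identifications by their twisted versions and verify explicitly that the three occurrences of $\Pi^b\otimes\alpha$ reduce to a single one.
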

\begin{proof}
The superspace $V=Hom_{\mathsf{SMod}-H} (Dist(G), Dist(H))$ is isomorphic to
$(K[G]_r^{\circ}\otimes Dist(H))^H$, where an element $f\otimes\phi\in K[G]\otimes Dist(H)$ acts on $Dist(G)$ as
\[\psi(f\otimes\phi)=(-1)^{|f|}\psi(f)\phi\]
for $\psi\in Dist(G)$.  
Moreover, $Hom_K(Dist(G), Dist(H))$ and $K[G]_l^{\circ}\otimes Dist(H)_{triv}$ are isomorphic as right $G$-supermodules. Symmetrically, $Hom_K(Dist(G), Dist(H))$ and
$K[G]_{triv}\otimes (K[H]_l)^*\simeq K[G]_{triv}\otimes K[H]_l$ are isomorphic as left $H$-supermodules.
 
Lemma \ref{anisomforright} implies that $Dist(H)\simeq \Pi^{a}K[H]_r^{\circ}\otimes\chi^{-1}$ as right $H$-supermodules. Therefore $V$ is isomorphic to
\[^{H}_{H}ind (\Pi^a K[G]^{\circ}_r\otimes\chi^{-1})\simeq\Pi^a (K[G]_r^{\circ}\otimes ^{H}_{H}ind (\chi^{-1}))=\Pi^a K[G]^{\circ}_r\otimes\chi^{-1}, \]
where $^{H}_{H}ind$ is a right-hand-side counterpart of the functor $ind^H_H$. This isomorphism is defined as 
\[f\otimes\chi^{-1}\mapsto \sum f_1\otimes\chi^{-1}\otimes\overline{s_{K[G]}(f_2)}\chi^{-1},\]
where $\overline{s_{K[G]}(f_2)}$ is the image of $s_{K[G]}(f_2)$ in $K[H]$. In particular, $V$ is isomorphic to $\Pi^a K[G]_l^{\circ}$ as a right $G$-supermodule, 
and to $\Pi^a K[G]_r\otimes\chi$ as a left $H$-supermodule.
 
Consequently, $coind^G_H M$ is isomorphic to 
\[((\Pi^a(K[G]_r\otimes\chi)^*\otimes M)^H\simeq \Pi^a(K[G]_r^*\otimes\chi^{-1}\otimes M)^H.\] 
On the other hand, $coind^G_H M$ as a $G$-supermodule is isomorphic to $(K[G]^{\circ}_l)^*\otimes M_{triv}$, where $G$ acts on $(K[G]_l^{\circ})^*$ 
via anti-automorphism $g\mapsto g^{-1}$ for $g\in G$. This together with our Lemma \ref{anisomforright} and Corollary 2.3 of  \cite{zubfr} concludes the argument. 
\end{proof}
\begin{lm}\label{doubleduality}
If $M$ is a finite-dimensional $H$-supermodule, then $ind^G_H (M^*)$ is naturally isomorphic to $(coind^G_H M)^*$.
\end{lm}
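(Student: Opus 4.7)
The plan is to identify $ind^G_H(M^*)$ with $(coind^G_H M)^*$ via the natural tensor-hom duality, which is an isomorphism in this finite setting because $Dist(G)$ is finite-dimensional and projective as a right $Dist(H)$-supermodule.

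Working in the equivalent category $Dist(G)-\mathsf{SMod}$ (valid since $G$ is infinitesimal), I would write $coind^G_H M = Dist(G)\otimes_{Dist(H)} M$ and realize $ind^G_H(M^*)$ as $Hom_{Dist(H)}(Dist(G), M^*)$ with the appropriate left $Dist(G)$-action. The natural map
\[ind^G_H(M^*) \to (coind^G_H M)^*,\quad \phi \mapsto \bigl(x\otimes m \mapsto (-1)^{|\phi||x|}\phi(x)(m)\bigr),\]
is first checked to be well-defined over $Dist(H)$. This reduces to a direct computation using the $Dist(H)$-equivariance of $\phi$ together with the standard dual $Dist(H)$-supermodule structure on $M^*$, with the antipode $s_{Dist(H)}$ mediating between left and right module conventions as needed.

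Next I would verify $G$-equivariance, using that the $G$-actions on $ind^G_H(M^*)$ and on $(coind^G_H M)^*$ both derive from multiplication on $Dist(G)$ (right multiplication in one case, dual of left multiplication in the other), and that these two multiplications on $Dist(G)$ commute. For the isomorphism claim itself, I would invoke that $Dist(G)$ is projective as a right $Dist(H)$-supermodule (noted just after Corollary \ref{forDist}) and finitely generated (since $G$ is infinitesimal); standard tensor-hom duality then gives the isomorphism at the level of superspaces, and combined with the previous step, at the level of $G$-supermodules.

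The main obstacle will be the careful tracking of Koszul signs and of the antipode-mediated conversion between left and right $Dist(H)$-module structures on both $Dist(G)$ and $M^*$, which makes the verification more delicate than in the classical ungraded setting. An alternative that avoids most of this bookkeeping is to argue abstractly via Yoneda, combining the tensor identity $V\otimes coind^G_H M\simeq coind^G_H(V|_H\otimes M)$ with Frobenius reciprocity for both induction functors to obtain $Hom_G(V, (coind^G_H M)^*)\simeq Hom_G(V, ind^G_H(M^*))$ naturally in $V$; this trades the direct sign computation for an independent verification of the super tensor identity.
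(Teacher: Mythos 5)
Your proposal is correct and follows essentially the same route as the paper, whose entire proof is the instruction to modify Jantzen's Lemma I.8.15 (the classical $ind^G_H(M^*)\simeq(coind^G_H M)^*$ for finite group schemes, proved via the tensor--hom duality between $Dist(G)\otimes_{Dist(H)}M$ and $Hom_{Dist(H)}(Dist(G),M^*)$) using Proposition 2.1 of \cite{zub4} to handle the super duality conventions. Your plan is simply a fleshed-out version of that, and the sign/antipode bookkeeping you flag is exactly what the cited proposition is meant to supply.
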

\begin{proof}
Modify the proof of Lemma I.8.15 of \cite{jan} using Proposition 2.1 of \cite{zub4}.
\end{proof}
\begin{pr}\label{alifttoGH'}
If $M$ is a $H'$-supermodule, then the $Dist(GH')$-supermodule structure on $Dist(GH')\otimes_{Dist(H')} M$ can be lifted to the structure of a $GH'$-supermodule.
\end{pr}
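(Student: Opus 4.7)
The strategy is to transport a $GH'$-supermodule structure from an induced module through a chain of isomorphisms whose endpoints are $Dist(GH')\otimes_{Dist(H')}M$.

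First, combining Lemma \ref{adecompofdist} with the projectivity of $Dist(G)$ as a right $Dist(H)$-supermodule (noted just before Lemma \ref{anidentity}), one obtains a $(Dist(G),Dist(H'))$-superbimodule isomorphism $Dist(GH')\cong Dist(G)\otimes_{Dist(H)}Dist(H')$. Applying $-\otimes_{Dist(H')}M$ yields a left $Dist(G)$-supermodule isomorphism
\[\Phi\colon Dist(GH')\otimes_{Dist(H')}M\;\xrightarrow{\sim}\;Dist(G)\otimes_{Dist(H)}M|_H\;=\;coind^G_H(M|_H).\]
Since $G$ is infinitesimal, the equivalence $G-\mathsf{SMod}\simeq\,{}_{Dist(G)}\mathsf{SMod}$ makes the right-hand side canonically a $G$-supermodule.

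Next, Proposition \ref{coind and ind} furnishes a $G$-supermodule isomorphism
\[coind^G_H(M|_H)\simeq ind^G_H\bigl(\Pi^{a+b}M|_H\otimes\alpha|_H\chi^{-1}\bigr).\]
The key point is that the $H$-supermodule inside the induction is the restriction of the \emph{$H'$-supermodule} $\widetilde{M}:=\Pi^{a+b}M\otimes\alpha'|_{H'}(\chi')^{-1}$, since $\alpha'$ and $\chi'$ restrict to $\alpha$ and $\chi$ respectively. Remark \ref{interpretation}, a consequence of the MacKay imprimitivity theorem of \cite{zub1}, then equips $ind^{GH'}_{H'}\widetilde{M}$ with a natural $GH'$-supermodule structure and supplies a $G$-supermodule isomorphism $(ind^{GH'}_{H'}\widetilde{M})|_G\simeq ind^G_H(\widetilde{M}|_H)$. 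Composing everything along $\Phi$ transports a $GH'$-supermodule structure onto $Dist(GH')\otimes_{Dist(H')}M$.

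It remains to verify that this structure lifts the original $Dist(GH')$-action. Writing $Dist(GH')=Dist(G)Dist(H')$, compatibility on the $Dist(G)$ factor is automatic, because every intermediate identification is a $G$-supermodule isomorphism and $G$ is infinitesimal. The main obstacle is the $Dist(H')$-compatibility: one must show that differentiating the transported $H'$-action reproduces left multiplication by $Dist(H')\subset Dist(GH')$. The transported action is assembled from the conjugation formula $(h'*\mathfrak{f})(g)=h'\mathfrak{f}(h'^{-1}gh')$ of Remark \ref{interpretation}, combined with the parity shift $\Pi^{a+b}$ and character twist $\alpha'|_{H'}(\chi')^{-1}$ produced in the proof of Proposition \ref{coind and ind} via Lemma \ref{anisomforright} and Corollary \ref{forDist}. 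Evaluating on elementary tensors $1\otimes m$ and invoking Lemma \ref{apropertyofadj} together with Remark \ref{coincidenceofcharacters} to absorb the $Ad$-twists and the identity $\chi_r=\chi_l^{-1}$ should yield the required agreement; the careful bookkeeping of the parity shifts $\Pi^{a+b}$ is the most delicate aspect.
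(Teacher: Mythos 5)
Your proof has two genuine gaps, and the first one undercuts the whole strategy.

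First, the opening identification $Dist(GH')\cong Dist(G)\otimes_{Dist(H)}Dist(H')$ (equivalently, $Dist(GH')\otimes_{Dist(H')}M\cong coind^G_H(M|_H)$) is exactly what the paper is careful \emph{not} to assume. Immediately after this proposition the paper introduces the natural map $\tau\colon Dist(G)\otimes_{Dist(H)}M|_H\to Dist(GH')\otimes_{Dist(H')}M$ and calls it only an \emph{epimorphism}; Remark \ref{asuffcond} then gives a \emph{sufficient} condition (a splitting $G\simeq W\times H$ of superschemes) under which $\tau$ is an isomorphism, and the introduction explicitly presents that as a stronger statement available only under that extra hypothesis. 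Projectivity of $Dist(G)$ as a right $Dist(H)$-supermodule does not give injectivity of the multiplication map $Dist(G)\otimes_{Dist(H)}Dist(H')\to Dist(GH')$; that requires a PBW-type freeness decomposition as in Remark \ref{asuffcond}. So you cannot transport structure from $coind^G_H(M|_H)$ in the generality of the proposition.

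Second, even granting the isomorphism, the decisive point — that the transported $GH'$-action differentiates to the original left multiplication by $Dist(GH')$ — is exactly where you stop and write that the bookkeeping ``should yield the required agreement.'' That verification is the content of the proposition, not an afterthought, and your route makes it harder because the comparison of the $H'$-actions across the isomorphism of Proposition \ref{coind and ind} is essentially the content of the separate Proposition \ref{thesameforGH'}. The paper avoids both problems by working directly on $V=Dist(GH')\otimes_{Dist(H')}M$: it defines $x(\phi\otimes m)=Ad(x)\phi\otimes xm$ for $x\in H'(A)$, checks this is well defined modulo the relations of $\otimes_{Dist(H')}$, combines it with the $G$-action lifted from the $Dist(G)$-action (using infinitesimality of $G$, Lemma \ref{inducedactiononDist} and Lemma \ref{alifting}) to obtain a $G\ltimes H'$-action, computes via the adjoint formula that the induced $Dist(H')$-action is $\phi(\psi\otimes m)=\phi\psi\otimes m$, and finally shows the antidiagonal copy of $H$ acts trivially so that the action descends to $GH'\simeq(G\ltimes H')/H$. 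You would need to either add the splitting hypothesis of Remark \ref{asuffcond} or switch to this direct construction.
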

\begin{proof}
There is a structure of a $H'$-supermodule on $Dist(GH')\otimes M$ given by 
\[x(\phi\otimes m)=Ad(x)\phi\otimes xm\]
for $x\in H'(A)$ and $A\in\mathsf{SAlg}_K$.
Because
\[x(\phi\psi\otimes m-\phi\otimes\psi m)=Ad(x)\phi Ad(x)\psi\otimes xm-Ad(x)\phi\otimes x(\psi m)=\]
\[Ad(x)\phi Ad(x)\psi\otimes xm-Ad(x)\phi\otimes (Ad(x)\psi)(xm),\]
this provides a structure of an $H'$-supermodule on $V=Dist(GH')\otimes_{Dist(H')} M$.

We have already observed that a structure of a $Dist(G)$-supermodule lifts to a structure of a $G$-supermodule. 
Therefore, $V$ is a $G$-supermodule via lifting of its left $Dist(G)$-supermodule structure. 
Lemma \ref{inducedactiononDist} implies $x(\phi v)=Ad(x)(\phi) xv$ for $\phi\in Dist(G)$ and $v\in V$.  Using this and Lemma \ref{alifting} we infer that $V$ is a $G\ltimes H'$-supermodule. 

Accoding to Lemma 11.1 of \cite{zub2} and Lemma 5.5 of \cite{zubfr}, $Dist(H')$ acts on $V$ by the following rule 
\[\phi(\psi\otimes m)=\sum (-1)^{|\phi_2||\psi|} ad(\phi_1)\psi\otimes \phi_2 m,\]
where $\Delta_{Dist(H')}(\phi)=\sum \phi_1\otimes\phi_2, \phi\in Dist(H')$. Thus
\[\phi(\psi\otimes m)=\sum (-1)^{|\psi|(|\phi_2|+|\phi_3|)}\phi_1\psi s_{Dist(H')}(\phi_2)\phi_3\otimes m=\phi\psi\otimes m.
\]
Finally, $GH'$ is naturally isomorphic to $(G\ltimes H')/H$, where $H$ is embedded into $G\ltimes H'$ via  the map $x\mapsto (x, x^{-1})$ for $x\in H$. 
Consequently, $Dist(H)$ is embedded into $Dist(G\ltimes H')\simeq Dist(G)\otimes Dist(H')$ via the map $\phi\mapsto \sum\phi_1\otimes s_{Dist(H)}(\phi_2)$.
Using the above observations and Lemma 9.5 of \cite{zub3}, we see that $H\leq G\ltimes H'$ acts on $V$ trivially and our claim follows.
\end{proof}
There is a natural $Dist(G)$-supermodule epimorpism 
\[\tau: coind^G_H M|_{H}=Dist(G)\otimes_{Dist(H)} M|_H\to Dist(GH')\otimes_{Dist(H')} M.\]
If $\tau$ is an isomorphism, then the $G$-supermodule structure on $coind^G_H M|_H$ can be extended to a structure of a $GH'$-supermodule.
In this case, following \cite{jan}, we denote this $GH'$-supermodule $coind^G_H M|_H$ by $coind^{GH'}_{H'} M$.

Untill the end of this section we assume the supermodule $M$ is such that the $G$-supermodule structure on $coind^G_H M|_H$ can be lifted to 
the $GH'$-supermodule structure as above.

\begin{rem}\label{asuffcond}
There is a simple sufficient condition for the natural epimorphism $\tau$ to be an isomorphism.
Assume that $G$ contains a closed supersubscheme (not necessary supersubgroup) $W$ such that 
$W\cap H=1$ and the multiplication map $W\times H\to G$ is an isomorphism of superschemes. Then 
$\tau$ is an isomorphism because $GH'\simeq W\times H'$ and both $Dist(G)$ and $Dist(GH')$ are free right supermodules over $Dist(H)$ and $Dist(H')$, respectively. 
More precisely, $Dist(G)\simeq Dist(W, \mathfrak{m}_1)\otimes Dist(H)$ and $Dist(GH')\simeq Dist(W, \mathfrak{m}_1)\otimes Dist(H')$.
\end{rem}

\begin{pr}\label{thesameforGH'} Assume $M$ satisfies the above assumption. Then
\[coind^{GH'}_{H'} M\simeq ind^{GH'}_{H'} \Pi^{a+b} M\otimes\alpha'|_{H'}\chi'^{-1}.\]
Additionally, if $M$ is finite-dimensional, then 
\[(ind^{GH'}_{H'} M)^*\simeq ind^{GH'}_{H'} \Pi^{a+b} M^* \otimes\alpha'|_{H'}\chi'.\]
\end{pr}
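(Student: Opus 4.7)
The plan is to deduce both isomorphisms from their $G$-level analogs (Proposition \ref{coind and ind} and Lemma \ref{doubleduality}) by lifting these to $H'$-equivariant maps. By Lemma \ref{adecompofdist}, $Dist(GH')=Dist(G)Dist(H')$, so once a $G$-supermodule isomorphism is shown to be $Dist(H')$-equivariant, it becomes automatically $GH'$-equivariant via Lemma \ref{alifting} (together with the fact that the $GH'$-structure on the modules in question is the unique lift of its $Dist$-structure, which is how $coind^{GH'}_{H'}M$ was defined in Proposition \ref{alifttoGH'}).

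For Part (1), the hypothesis provides a $GH'$-structure on $coind^G_HM|_H$ which we denote $coind^{GH'}_{H'}M$. Since $\alpha'$ and $\chi'$ extend $\alpha$ and $\chi$, the twisted module $N:=\Pi^{a+b}M\otimes\alpha'|_{H'}\chi'^{-1}$ is a well-defined $H'$-supermodule whose restriction to $H$ equals $\Pi^{a+b}M|_H\otimes\alpha|_H\chi^{-1}$; by the MacKay-type isomorphism recalled at the start of Section 2, $(ind^{GH'}_{H'}N)|_G\simeq ind^G_H N|_H$. Proposition \ref{coind and ind} then supplies a $G$-isomorphism between the two sides of the claim, and the remaining task is to verify $H'$-equivariance. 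I would do this by re-running the proof of Proposition \ref{coind and ind} with $H'$-actions inserted at every step: the identification of $V=Hom_{\mathsf{SMod}-H}(Dist(G),Dist(H))$ used there is built from Lemma \ref{anisomforright} and the $Ad$-action of $H$ on $Dist(H)$. Because $G\trianglelefteq G'$ forces $H=G\cap H'$ to be normal in $H'$, the $Ad$-action extends to $H'$, and the analog of Lemma \ref{anisomforright} holds with $\chi$ replaced by $\chi'$; the same applies to the $\alpha$-factor via $\alpha'$. Each ingredient of the proof of Proposition \ref{coind and ind} then has a direct $H'$-equivariant counterpart, and the composite isomorphism is $H'$-equivariant as required.

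For Part (2), I would first lift Lemma \ref{doubleduality} to $GH'$ by the same equivariance principle, obtaining $ind^{GH'}_{H'}(M^*)\simeq(coind^{GH'}_{H'}M)^*$ for finite-dimensional $M$. Taking duals and replacing $M$ by $M^*$ (using that $(M^*)^*\simeq M$ in the finite-dimensional case) yields $(ind^{GH'}_{H'}M)^*\simeq coind^{GH'}_{H'}M^*$. Applying Part (1) to $M^*$ produces an isomorphism with $ind^{GH'}_{H'}$ of a twisted $H'$-supermodule; a careful accounting of all characters introduced by the dual pairing $M\otimes M^*\to K$ and by the integral characters appearing in Lemma \ref{anisomforright} gives the stated twist $\alpha'|_{H'}\chi'$.

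The main obstacle is the $H'$-equivariance check in Part (1). Proposition \ref{coind and ind} proceeds through several Hopf-algebraic identifications, each of which must be verified to intertwine the extended $Ad$-action of $H'$ with the characters $\alpha'$, $\chi'$ on the relevant integrals. Once this verification is in place, Part (2) follows formally from Part (1), the lifted double-duality, and bookkeeping of parities and characters.
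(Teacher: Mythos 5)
Your proposal takes essentially the same route as the paper: Part (1) is proved by re-running the proof of Proposition \ref{coind and ind} while tracking the $H'$-action through each Hopf-algebraic identification (via $Ad$ on $Dist(H)$ and the conjugation action on $K[G]$, legitimate because $H=G\cap H'$ is normal in $H'$), and Part (2) is deduced from Part (1) together with a $GH'$-generalization of Lemma \ref{doubleduality}. One remark: carried out honestly, your Part (2) derivation --- dualize, substitute $M^*$, apply Part (1) --- produces the twist $\alpha'|_{H'}\chi'^{-1}$ rather than the displayed $\alpha'|_{H'}\chi'$; the paper's own proof has the same feature, so this points to a discrepancy in the stated exponent rather than to a defect of your argument.
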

\begin{proof}
For $V\in H'-\mathsf{SMod}$, the action of $H'$ on $coind^G_H V|_H$ is given by the rule 
\[x(\phi\otimes m)=Ad(x)\phi\otimes xm\]
for $x\in H'(A), \phi\in Dist(G)$ and $m\in V$. 
We need to check that this action commutes with the isomorphism
\[coind^G_H V|_H\simeq ind^G_H \Pi^{a+b} V\otimes\alpha|_H\chi^{-1}\simeq ind^{GH'}_{H'} \Pi^{a+b} V\otimes\alpha'|_{H'}\chi'^{-1}.\]

We can use the isomorphism from Corollary \ref{forDist} to extend the action of $H'$ on $coind^G_H M$ to $H'$-action on 
$Hom_{H-\mathsf{SMod}} (Hom_{\mathsf{SMod}-H} (Dist(G), Dist(H)), M)$ by 
\[(x\psi)(\pi)=x(\psi(\pi x)), (\phi)(\pi x)=Ad(x^{-1})((Ad(x)\phi)(\pi)),\] where 
\[\psi\in Hom_{H-\mathsf{SMod}} (Hom_{\mathsf{SMod}-H} (Dist(G), Dist(H)), M),\]
\[\pi\in Hom_{\mathsf{SMod}-H} (Dist(G), Dist(H)), \text{ and } \phi\in Dist(G).\]
Therefore $H'$ acts on $Hom_K (Dist(G), Dist(H))\simeq K[G]\otimes Dist(H)$ as
$(f\otimes\mu)x=Int(x^{-1})f\otimes Ad(x^{-1})\mu$ for $f\in K[G]$ and $\mu\in Dist(H)$. 
This action obviously commutes with the right $H$-action and preserves the $H\times G$-superbimodule structure of $Hom_K (Dist(G), Dist(H))$. 
Thus 
\[Hom_{\mathsf{SMod}-H} (Dist(G), Dist(H))\simeq\Pi^a K[G]\otimes\chi^{-1}, \]
where $H'$ acts on $K[G]$ by $fx=Int(x^{-1})f$. Arguing as in Proposition \ref{coind and ind} we infer that the $H'$-action on
\[coind^G_H M\simeq (K[G]_r\otimes M\otimes \alpha|_H\chi^{-1})^H\]
is induced by $Int$ on $K[G]$, and on the remaining tensor factors it is induced by the action on $M\otimes\alpha'|_{H'}\chi'^{-1}$. 
Remark \ref{interpretation} concludes the proof of the first statement of the Lemma.

If $M$ is a finite-dimensional $H'$-supermodule, then a generalization of Lemma \ref{doubleduality} gives a natural isomorphism 
\[coind^{GH'}_{H'} M^*\simeq (ind^{GH'}_{H'} M)^*,\] showing that the second statement follows from the first one.
\end{proof}

\section{Borel supersubgroups and root systems}\label{3}

Let $G=GL(m|n)$. The coordinate superalgebra $K[G]$ is generated by the matrix coefficients $c_{ij}$ for $1\leq i, j\leq m+n$ such that its parity $|c_{ij}|=0$ if and only if 
$1\leq i, j\leq m$ or $m+1\leq i, j\leq m+n,$ and by the element $D^{-1}$, where $D=\det(C_{00})\det(C_{11})$, $C_{00}=(c_{ij})_{1\leq i, j\leq m}$ and $C_{11}=(c_{ij})_{m+1\leq i, j\leq m+n}$. 
Recall that $\Delta_{K[G]}(c_{ij})=\sum_{1\leq k\leq m+n} c_{ik}\otimes c_{kj}$ and $\epsilon_{K[G]}(c_{ij})=\delta_{ij}$. 
Denote the supersubalgebra of $K[G]$ generated by all elements $c_{ij}$ by $A(m|n)$. Then $K[G]=A(m|n)_D$.

Let $Ber$ denote the group-like element $\det(C_{00}-C_{01}C_{11}^{-1}C_{10})\det(C_{11})^{-1}$, where $C_{01}=(c_{ij})_{1\leq i\leq m, m+1\leq j\leq m+n}$ and 
$C_{10}=(c_{ij})_{m+1\leq i\leq m+n, 1\leq j\leq m}$. The element $Ber$ is a character of $GL(m|n)$ which is called {\it Berezinian}. If $H$ is an algebraic supergroup and $V$ is a $H$-supermodule, then any supermodule structure on $V$ is uniquely defined by a supergroup morphism $f : H\to GL(V)=GL(m|n)$ where $m=\dim V_0, n=\dim V_1$. 
The character $Ber\circ f$ of $H$ will be denoted by $Ber(f)$.

We fix the standard maximal torus $T$ such that $T(A)$ consists of all diagonal 
matrices from $G(A)$ for $A\in\mathsf{SAlg}_K$ and identify $X(T)$ with the additive group $\mathbb{Z}^{m+n}$. 
In particular, every $\lambda\in X(T)$ has the form
$\sum_{1\leq i\leq m+n}\lambda_i\epsilon_i$, where 
\[\epsilon_i(t)=t_i, t=\left(\begin{array}{cccc}
t_1 & 0 & \ldots & 0 \\
0   & t_2 & \ldots & 0 \\
\vdots & \vdots & \ldots & 0 \\
0 & 0 & \dots & t_n
\end{array}\right)\in T(A),\] 
$A\in\mathsf{SAlg}_K$ and every $\lambda_i$ is an integer. 
For a character $\lambda\in X(T)$ as above we denote $\sum_{1\leq i\leq m+n}\lambda_i$ by $|\lambda|$. 

The bilinear form on $X(T)\otimes_{\mathbb{Z}}\mathbb{Q}$ is defined by $(\epsilon_i, \epsilon_j)=(-1)^{|\epsilon_i|}\delta_{ij}$, where $|\epsilon_i|=0$ if $1\leq i\leq m$
and $|\epsilon_i|=1$ if $m+1\leq i \leq m+n$. If we denote $\epsilon'_i=(-1)^{|\epsilon_i|}\epsilon_i$, 
then $(\epsilon_i, \epsilon'_j)=\delta_{ij}$.

The root system of $G$ is defined as $\Phi =\{\epsilon_i-\epsilon_j| 1\leq i\neq j\leq m+n\}$. Let $S_k$ denote the symmetric group on the elements $\{1, \ldots, k\}$. 
For a given $w\in S_{m+n}$ we have the decomposition $\Phi=\Phi^+_w\bigcup \Phi^-_w$, where 
$\Phi^+_w =\{\epsilon_{wi}-\epsilon_{wj}| 1\leq i < j\leq m+n\}$ is the $w$-{\it positive} part
of $\Phi$ and $\Phi^-_w =-\Phi_w^+=\{\epsilon_{wi}-\epsilon_{wj}| 1\leq i > j\leq m+n\}$ is the $w$-{\it negative} part of $\Phi$.  

The simple roots of $\Phi^+_w$ form a subset $$\Pi_w=\{\alpha_i=\epsilon_{wi}-\epsilon_{w(i+1)}| 1\leq i < m+n\}.$$
The set $\Phi^+_w$ defines a partial order $<_w$ on the weight lattice $X(T)$ by $\mu <_w\lambda$ if $\lambda-\mu\in\sum_{\alpha\in\Phi_w^+} \mathbb{N}_+\alpha=\sum_{\alpha\in\Pi_w} \mathbb{N}_+\alpha$.

Denote the parity $|\epsilon_i|+|\epsilon_j|=|c_{ij}|$ of the root $\alpha=\epsilon_i-\epsilon_j$ by $p(\alpha)$. The coroot $\alpha^{\vee}=(\epsilon_i-\epsilon_j)^{\vee}$ equals $\epsilon'_i-\epsilon'_j$. 
For any $\alpha\in \Phi$ one can define a reflection $s_{\alpha}$ such that
$s_{\alpha}(\lambda)=\lambda-(\lambda, \alpha^{\vee})\alpha$.
It is easy to see that if $\alpha=\epsilon_i-\epsilon_j$, then $s_{\alpha}$ corresponds to the transposition of the $i$-th and $j$-th entries and therefore $s_{\alpha}$ will be identified with $(ij)$ of $S_{m+n}$. 
If $p(\alpha)=0$ we call $s_{\alpha}$ an {\it even} reflection, otherwise it is called an {\it odd} reflection. All reflections generate the group $S_{m+n}$ and the 
even reflections generate the {\it Weyl subgroup} $S_m\times S_n\subseteq S_{m+n}$. 

Denote by $D_{m, n}$ the set of representatives of $S_m\times S_n{/}S_{m+n}$-cosets that have the minimal length. 
According to \cite{brunkuj} or \cite{jan}, II.1.5(4), $D_{m, n}$ consists of all $w\in S_{m+n}$ such that 
\[w^{-1}(1)<\ldots < w^{-1}(m) \text{ and } w^{-1}(m+1)<\ldots < w^{-1}(m+n).\]
Then every $w\in S_{m+n}$ has a unique decomposition $w=w_0 w_1$, where $w_0\in S_m\times S_n$ and $w_1\in D_{m, n}$. We call this the {\it regular} decomposition of $w$.

Let $\rho_0(w)$ denote $\frac{1}{2}\sum_{\alpha\in\Phi^+_w, p(\alpha)=0}\alpha$, $\rho_1(w)$ denote $\frac{1}{2}\sum_{\alpha\in\Phi^+_w , p(\alpha)=1}\alpha$
and $\rho$ denote $\rho_0(w)-\rho_1(w)$. The elements $\rho_0(1)$, $\rho_1(1)$ and $\rho(1)$, respectively will be denoted just by $\rho_0$, $\rho_1$ and $\rho$, respectively.

The Borel supersubgroup $B^+_w$ corresponding to $\Phi_w^+$ is the stabilizer of the full flag
\[V_1\subseteq V_2\subseteq\ldots\subseteq V_i\subseteq\ldots\subseteq V_{m+n}=V,\]
where $V_i=\sum_{1\leq s\leq i} Kv_{ws}$ for $1\leq i\leq m+n$. 
Symmetrically, the opposite Borel supersubgroup $B^-_w$ corresponding to $\Phi_w^-$ is the stabilizer of the full flag
\[W_1\subseteq W_2\subseteq\ldots\subseteq W_i\subseteq\ldots\subseteq W_{m+n}=V,\]
where $W_i=\sum_{m+n-i+1\leq s\leq m+n} Kv_{ws}$ for $1\leq i\leq m+n$. 

The unipotent radicals of $B^+_w$ and $B^-_w$, respectively are denoted by $U^+_w$ and $U^-_w$, respectively. 
We denote by $G_r$ the $r$-th Frobenius kernel of $G$. The $r$-th Frobenius kernels of the Borel subsupergroups and their unipotent radicals are denoted by 
$B^+_{r, w}$, $U^+_{r, w}$ and $B^-_{r, w}$, $U^-_{r, w}$, respectively. 

From now on, we will omit $w=1$ from all of the subscripts; for example, we will write $B^+$ and $<$ instead of $B^+_1$ and $<_1$.
Denote by $V^+$ (and $V^-$, respectively) the supersubgroup of $U^+$ (and $U^-$, respectively) consisting of all matrices defined by equations $c_{ij}=\delta_{ij}$ for $1\leq i,j\leq m$ 
and $m+1\leq i,j \leq m+n$. 

\section{Representations of $G_r B^{\pm}_w$ and $G_r T$}

In this section we discuss representation theory of Frobenius thickenings of certain supersubgroups of $G$. We derive results for compositions factors and formal characters of induced and coinduced supermodules.

Fix a positive integer $r$. For arbitrary $\lambda\in X(T)$ and $a\in\mathbb{Z}_2$ we define
$$Z'_{r, w}(\lambda^a)=ind^{G_r}_{B^-_{r, w}} K_{\lambda}^a, \ Z_{r, w}(\lambda^a)=coind^{G_r}_{B^+_{r, w}} K_{\lambda}^a=Dist(G_r)\otimes_{Dist(B_{r, w}^+)} K^a_{\lambda},$$
$$
\mathcal{Z}'_{r, w}(\lambda^a)=ind^{G_r T}_{B^-_{r, w} T} K_{\lambda}^a, \ \mathcal{Z}_{r, w}(\lambda^a)=coind^{G_r T}_{B^+_{r, w} T} K_{\lambda}^a=Dist(G_r T)\otimes_{Dist(B^+_{r, w} T)} K_{\lambda}^a,
$$
$$\Hat{Z}_{r, w}'(\lambda^a)=ind^{G_r B_w^-}_{B_w^-} K_{\lambda}^a , \ \Hat{Z}_{r, w}(\lambda^a)=coind^{G_r B_w^+}_{B_w^+} K^a_{\lambda} =Dist(G_r B_w^+)\otimes_{Dist(B_w^+)} K^a_{\lambda}.$$

\begin{lm}\label{decompositions}
There are superscheme isomorphisms
$$U^+_{r, w}\times B_w^- \simeq G_r B_w^- , \ U^-_{r, w}\times B_w^+ \simeq G_r B_w^+ ,$$
$$U^+_{r, w}\times B_{r, w}^- T\simeq G_r T , \ U^-_{r, w}\times B_{r, w}^+ T\simeq G_r T$$
and
$$U^+_{r, w}\times B_{r, w}^- \simeq G_r  , \ U^-_{r, w}\times B_{r, w}^+ \simeq G_r 
$$
induced by the multiplication map.
\end{lm}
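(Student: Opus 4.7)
The plan is to superize the classical big cell decomposition of $G=GL(m|n)$ and then restrict to the various Frobenius thickenings. By conjugating with the permutation element corresponding to $w$ I would reduce to the case $w=1$, so the core statement to establish is $U^+_r \times B^-_r \simeq G_r$. The key preliminary is the super Gauss decomposition: the multiplication map $\mu: U^+ \times B^- \to G$ is an open immersion onto the principal open subscheme $\Omega \subseteq G$ where the leading principal minors of the even diagonal blocks $C_{00}$ and $C_{11}$ are invertible. Because these blocks are even, this reduces to the classical $LU$ decomposition for $GL(m)\times GL(n)$, with the odd entries of $u^+$ and $b^-$ determined by explicit super-polynomial formulas in the coefficients $c_{ij}$, yielding an isomorphism of superschemes $U^+ \times B^- \xrightarrow{\sim} \Omega$.

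The infinitesimal case $U^+_r \times B^-_r \simeq G_r$ then follows because $G_r$ is infinitesimal with support at the identity, so its closed immersion into $G$ factors through $\Omega$; composing with $\mu^{-1}$ and inspecting how the Frobenius kernel equations cut out each factor gives a closed immersion $U^+_r \times B^-_r \hookrightarrow G_r$ from multiplication. I would verify this is an isomorphism by checking that the induced map on distribution superalgebras $Dist(U^+_r)\otimes Dist(B^-_r) \to Dist(G_r)$ is bijective; this is super PBW applied to the triangular decomposition $Lie(G) = \mathfrak{u}^+ \oplus \mathfrak{b}^-$, and the dimensions on both sides agree.

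For the $G_r B_w^-$ case, one has $G_r B_w^- = U^+_{r,w}\cdot B^-_{r,w}\cdot B_w^- = U^+_{r,w}\cdot B_w^-$ (the second equality because $B^-_{r,w}\subseteq B_w^-$), so the multiplication map $U^+_{r,w}\times B_w^- \to G_r B_w^-$ is surjective on points and injective because $U^+_{r,w}\cap B_w^- = 1$. I would upgrade this to a superscheme isomorphism by base-changing the previously established isomorphism $U^+_{r,w}\times B^-_{r,w} \simeq G_r$ along the closed immersion $B^-_{r,w}\hookrightarrow B_w^-$, exploiting the stability of $U^+_{r,w}$ under the natural action of $B_w^-$ inside $\Omega$. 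The case $U^+_{r,w}\times B^-_{r,w} T \simeq G_r T$ is the same argument applied to the decomposition $G_r T = U^+_{r,w}\cdot (B^-_{r,w} T)$, with trivial intersection $U^+_{r,w}\cap B^-_{r,w} T = 1$, and the three remaining ``opposite'' decompositions follow by symmetry from the opposite big cell $U^-\times B^+ \to G$.

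The main obstacle will be upgrading the infinitesimal product decomposition rigorously to a superscheme isomorphism in the non-infinitesimal cases $G_r B_w^\pm$ and $G_r T$ at the level of super coordinate algebras, rather than merely checking bijectivity on $A$-points for all $A\in\mathsf{SAlg}_K$. In practice this amounts to verifying that the comultiplication of $K[G]$ restricted appropriately induces the expected super tensor factorization $K[G_r B_w^\pm] \simeq K[U^\mp_{r,w}]\otimes K[B_w^\pm]$, which requires careful bookkeeping of signs and the interaction of the odd generators of $U^\mp_{r,w}$ with the full Borel factor.
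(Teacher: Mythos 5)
Your treatment of the crucial infinitesimal case $U^{\pm}_{r,w}\times B^{\mp}_{r,w}\simeq G_r$ is essentially the paper's: the paper also verifies that superalgebra multiplication gives a superspace isomorphism $Dist(U^{\pm}_{r,w})\otimes Dist(B^{\mp}_{r,w})\to Dist(G_r)$ via the PBW-type basis $\prod f_i^{(t_i)}\prod f_j^{s_j}$ with $0\leq t_i\leq p^r-1$, $0\leq s_j\leq 1$, and then dualizes to $K[G_r]\simeq K[U^{\pm}_{r,w}]\otimes K[B^{\mp}_{r,w}]$; since all three superschemes are finite, this dual map is exactly the comorphism of multiplication, so that step is sound (your detour through the big cell is correct but not needed for it).

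The genuine gap is in the passage to $G_rB_w^{\pm}$ and $G_rT$, and you have in fact flagged it yourself without closing it. Surjectivity of $U^+_{r,w}(A)\times B_w^-(A)\to (G_rB_w^-)(A)$ for every $A\in\mathsf{SAlg}_K$ is not automatic, because $G_rB_w^-$ is defined as the image of $G_r\ltimes B_w^-$ (a quotient), so a priori its $A$-points are only products $u\cdot b$ after a faithfully flat extension of $A$; and ``base-changing the isomorphism $U^+_{r,w}\times B^-_{r,w}\simeq G_r$ along the closed immersion $B^-_{r,w}\hookrightarrow B_w^-$'' goes in the wrong direction: base change restricts an isomorphism to a subscheme, it does not extend one from $G_r$ to the larger superscheme $G_rB_w^-$. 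What actually closes this is the paper's Remark \ref{asuffcond}: once $G=W\times H$ with $W=U^{\pm}_{r,w}$, $H=B^{\mp}_{r,w}$ is established, one gets $GH'\simeq W\times H'$ for $H'=B_w^{\mp}$ or $B^{\mp}_{r,w}T$ because $Dist(G)\simeq Dist(W,\mathfrak{m}_1)\otimes Dist(H)$ and $Dist(GH')\simeq Dist(W,\mathfrak{m}_1)\otimes Dist(H')$ are free over $Dist(H)$ and $Dist(H')$ with the same ``$W$-part'' (equivalently, the section of $G\to G/H\simeq W$ furnished by the splitting trivializes the $H'$-torsor $GH'\to GH'/H'\simeq G/H$). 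You need this freeness/torsor-triviality argument, or an explicit identification $K[G_rB_w^{\pm}]\simeq K[U^{\mp}_{r,w}]\otimes K[B_w^{\pm}]$, to legitimately upgrade the pointwise product decomposition to a superscheme isomorphism; as written, that step of your proposal is an announced intention rather than a proof.
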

\begin{proof}
By Remark \ref{asuffcond} it is enough to show that the last pair of morphisms are isomorphisms.
Using arguments of Remark 5.10 of \cite{zubfr} and the proof of Lemma 3.1 of \cite{zub3} we establish that 
for any supergroup from the list $\{U^{\pm}_{r, w}, B^{\pm}_{r, w}, G_r\}$ its superalgebra of distributions has a basis $\prod_{|f_i|=0}f_i^{(t_i)}\prod_{|f_i|=1}f_i^{s_i}$, where elements $f_i$ form a basis of its Lie superalgebra and their exponents satisfy $0\leq t_i\leq p^r -1$ and $0\leq s_i\leq 1$. The superalgebra multiplication induces a superspace isomorphism
\[Dist(U^{\pm}_{r, w})\otimes Dist(B^{\mp}_{r, w})\to Dist(G_r),\]
and the corresponding morphism $K[G_r]\to K[U^{\pm}_{r, w}]\otimes K[B^{\mp}_{r, w}]$ is also an isomorphism.
Since it is dual to the multiplication map $U^{\pm}_{r, w}\times B^{\mp}_{r, w}\to G_r$, the last map is an isomorphism (compare with Lemma 14.2 from \cite{zub1}). 
\end{proof}
\begin{lm}\label{soclesandtops}
For every $\lambda\in X(T)$ and $a\in\mathbb{Z}_2$, the socle of $\Hat{Z}'_{r, w}(\lambda^a)$ and the top of $\Hat{Z}_{r, w}(\lambda^a)$ are irreducible supermodules in 
$G_r B_w^-$-$\mathsf{SMod}$ or $G_r B_w^+$-$\mathsf{SMod}$, correspondingly. 

Analogous statements hold for $G_r T$-supermodules
$\mathcal{Z}'_{r, w}(\lambda^a), \mathcal{Z}_{r, w}(\lambda^a)$ and for $G_r$-supermodules
$Z'_{r, w}(\lambda^a), Z_{r, w}(\lambda^a)$. Moreover, in these cases the socle of the first supermodule and top of the second supermodule are isomorphic to each other.
\end{lm}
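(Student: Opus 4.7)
I would adapt the standard classical arguments (Jantzen, II.9.2--II.9.3) to the super setting, taking advantage of the superscheme decompositions in Lemma \ref{decompositions} to control weights and $U^+_{r,w}$-fixed points in each of the three parallel cases.

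The first step is to use Lemma \ref{decompositions} to identify, as $T$-supermodules,
\[
\mathcal{Z}_{r,w}(\lambda^a) \cong Dist(U^-_{r,w}) \otimes K_\lambda^a, \qquad \mathcal{Z}'_{r,w}(\lambda^a) \cong K[U^+_{r,w}] \otimes K_\lambda^a,
\]
together with entirely analogous identifications for the pairs $(\Hat{Z}_{r,w},\Hat{Z}'_{r,w})$ and $(Z_{r,w},Z'_{r,w})$. In every case all $T$-weights lie in $\lambda - \mathbb{N}\Phi^+_w$, the $\lambda$-weight space is exactly one-dimensional, and the space of $U^+_{r,w}$-fixed points in the induced supermodule equals the one-dimensional line $K_\lambda^a$.

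For the tops of the three coinduced supermodules the argument is the standard cyclic one: each coinduced supermodule is generated over the ambient supergroup by $v = 1\otimes v_\lambda$ of weight $\lambda$. Since the $\lambda$-weight space is one-dimensional and spanned by $v$, no proper supersubmodule can contain $v$, so every proper supersubmodule has trivial $\lambda$-weight component. Hence the sum of all proper supersubmodules remains proper, yielding a unique maximal supersubmodule and therefore an irreducible top. For the socles of the three induced supermodules I would use the supergroup analogue of the Lie--Kolchin lemma: because $U^+_{r,w}$ is infinitesimal unipotent, the augmentation ideal of $Dist(U^+_{r,w})$ is nilpotent, so every nonzero finite-dimensional simple subsupermodule has nonzero $U^+_{r,w}$-fixed points. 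Since the full space of $U^+_{r,w}$-invariants in the induced supermodule equals $K_\lambda^a$, the socle can contain at most one simple summand, and that summand has a one-dimensional $U^+_{r,w}$-fixed line of weight $\lambda$ and parity $a$, giving irreducibility.

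For the final ``moreover'' claim in the $G_rT$ and $G_r$ cases, observe that the irreducible top of $\mathcal{Z}_{r,w}(\lambda^a)$ (respectively $Z_{r,w}(\lambda^a)$) carries a one-dimensional $U^+_{r,w}$-fixed line of weight $\lambda$ and parity $a$, namely the image of the cyclic generator $v$; the irreducible socle of $\mathcal{Z}'_{r,w}(\lambda^a)$ (respectively $Z'_{r,w}(\lambda^a)$) has exactly the same invariant. A finite-dimensional simple $G_rT$- or $G_r$-supermodule is determined up to isomorphism by its one-dimensional highest-weight line together with its parity, so the two simples must coincide. The main obstacle is making the Lie--Kolchin step fully rigorous in the super setting without circular appeal to the very classification of simples being developed alongside; this is handled by exploiting only the nilpotency of the augmentation ideal of $Dist(U^+_{r,w})$ on a given finite-dimensional supermodule. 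A secondary issue is bookkeeping the parity $a$ throughout so that the top and the socle are genuinely isomorphic (rather than differing by the parity shift $\Pi$): this follows because the generating highest-weight line $K_\lambda^a$ has the same parity $a$ on both sides. The $\Hat{Z}_{r,w}$ / $\Hat{Z}'_{r,w}$ case is treated identically for irreducibility, and no ``moreover'' statement is claimed for it because $B_w^\pm$ is not infinitesimal.
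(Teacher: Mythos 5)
Your overall strategy is the same as the paper's: use Lemma \ref{decompositions} to identify $\Hat{Z}'_{r,w}(\lambda^a)|_{B^+_{r,w}T}\simeq K_\lambda^a\otimes K[U^+_{r,w}]$ and $\Hat{Z}_{r,w}(\lambda^a)|_{B^-_{r,w}T}\simeq Dist(U^-_{r,w})\otimes K_\lambda^a$, and then exploit the one-dimensionality of the $U^+_{r,w}$-invariants (for socles) and of the $U^-_{r,w}$-coinvariants (for tops). Your socle argument via nilpotency of the augmentation ideal of $Dist(U^+_{r,w})$ is exactly the paper's. There are, however, two places where your write-up does not quite close.

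First, for the irreducibility of the top your cyclic/weight-space argument is fine for $G_rB_w^+$ and $G_rT$, but it does not work as stated for the pure $G_r$ case: a $G_r$-subsupermodule of $Z_{r,w}(\lambda^a)$ is only $T_r$-stable, so it decomposes into weight spaces for $X(T)/p^rX(T)$, and the $\lambda\bmod p^r$ weight space of $Dist(U^-_{r,w})\otimes K_\lambda^a$ need not be one-dimensional (e.g. $(p^r-1)(\epsilon_2-\epsilon_1)+(\epsilon_3-\epsilon_1)+\cdots$ can lie in $p^rX(T)$). Hence ``no proper submodule meets the $\lambda$-line'' is not justified for $G_r$. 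The paper avoids this by using the right-exactness of the coinvariants functor $M\mapsto M_{U^-_{r,w}}$: since $Z_{r,w}(\lambda^a)_{U^-_{r,w}}\simeq K_\lambda^a$ is one-dimensional and every nonzero quotient has nonzero $U^-_{r,w}$-coinvariants, the top cannot have two simple summands. This is precisely the dual of the invariants argument you already use for the socle, so the repair is immediate, but it should replace the weight argument in the $G_r$ case. Second, your proof of the ``moreover'' claim invokes the fact that a simple $G_rT$- (or $G_r$-) supermodule is determined by its highest-weight line and parity; at this point of the paper that classification (Lemma \ref{simples}) has not yet been established and is itself deduced from the present lemma, so as written this is circular. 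The paper instead uses (co)Frobenius reciprocity: $Hom_{G_rT}(\mathcal{Z}_{r,w}(\lambda^a),L')\simeq Hom_{B^+_{r,w}T}(K_\lambda^a,L')\simeq K$, and since any nonzero map from $\mathcal{Z}_{r,w}(\lambda^a)$ to the simple $L'$ must factor through its simple top $L$, one gets $L\simeq L'$ directly, with the parity matching because the reciprocity isomorphism is one-dimensional over $K$ in the super sense. You should substitute this self-contained step for the appeal to the classification of simples.
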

\begin{proof}
Combining our Lemma \ref{decompositions} with Lemma 8.2 and Remark 8.3 of \cite{zub1}, we infer that
$\Hat{Z}'_{r, w}(\lambda^a)|_{B^+_{r, w} T}\simeq K_{\lambda}^a\otimes K[U^+_{r, w}]$, where
$U_{r, w}^+$ acts trivially on $K_{\lambda}^a$ and $T$ acts on $K[U^+_{r, w}]$ by conjugations. 
Thus $\Hat{Z}'_{r, w}(\lambda^a)^{U^+_{r, w}}\simeq K_{\lambda}^a$ as a $T$-supermodule, hence the socle of $\Hat{Z}'_{r, w}(\lambda^a)$ is simple. 
Symmetrically, $\Hat{Z}_{r, w}(\lambda^a)\simeq Dist(U^-_{r, w})\otimes K_{\lambda}^a$ as a $B_{r, w}^- T$-supermodule, 
where $U^-_{r, w}$ acts trivially on $K_{\lambda}^a$ and $T$ acts on $Dist(U^-_{r, w})$ by conjugations. 
Thus $\Hat{Z}_{r, w}(\lambda^a)_{U^-_{r, w}}=\Hat{Z}_{r, w}(\lambda^a)/Dist(U^-_{r, w})^+\Hat{Z}_{r, w}(\lambda^a)\simeq K_{\lambda}^a$. 
According to \cite{zub2}, \S 4 the functor $M\to M_{U^-_{r, w}}$ is right exact, and therefore the top of $\Hat{Z}_{r, w}(\lambda^a)$ is simple.

Analogous arguments work for the induced/coinduced $G_r T$-supermodules and induced/coinduced $G_r$-supermodules as well.
Let $L'$ denote the socle of $\mathcal{Z}'(\lambda^a)$ and let $L$ denote the top of $\mathcal{Z}_{r, w}(\lambda^a)$. 
Then $L'^{U^+_{r, w}}\simeq K_{\lambda}^a$ as a $T$-supermodule. The (co)Frobenius reciprocity law implies
\[Hom_{G_r T}(\mathcal{Z}_{r, w}(\lambda^a), L')\simeq Hom_{B^+_{r, w} T}(K_{\lambda}^a , L')\simeq K,\] 
hence $L\simeq L'$. Similar arguments work in the case of $G_r$-supermodules.
\end{proof}
Denote by $\Hat{L}'_{r, w}(\lambda^a)$ the socle of $\Hat{Z}'_{r, w}(\lambda^a)$ and by $\Hat{L}_{r, w}(\lambda^a)$ the top of $\Hat{Z}_{r, w}(\lambda^a)$. 
Similarly, denote by $\mathcal{L}_{r, w}(\lambda^a)$ the socle of $\mathcal{Z}'_{r, w}(\lambda^a)$ (or the top of $\mathcal{Z}_{r, w}(\lambda^a)$) and 
by $L_{r, w}(\lambda^a)$ the socle of $Z'_{r, w}(\lambda^a)$ (or the top of $Z_{r, w}(\lambda^a)$). 
\begin{rem}\label{weights}
Let $M$ be a supermodule from the list 
\[\{\Hat{Z}'_{r, w}(\lambda^a), \Hat{Z}_{r, w}(\lambda^a), \mathcal{Z}'_{r, w}(\lambda^a), \mathcal{Z}_{r, w}(\lambda^a), Z'_{r, w}(\lambda^a), Z_{r, w}(\lambda^a)\}.\] 
The proof of Lemma \ref{soclesandtops} shows that if $M_{\mu}\neq 0$, then $\mu\leq_w\lambda$.
The same statement is valid for the simple supermodules $\Hat{L}'_{r, w}(\lambda^a), \Hat{L}_{r, w}(\lambda^a), \mathcal{L}_{r, w}(\lambda^a), L_{r, w}(\lambda^a)$ that are composition factors of such supermodules $M$.
\end{rem}
Let $H$ be an algebraic supergroup and $M$ be an $H$-supermodule. We call $M$ {\it cogenerated} by an element $m\in M$ if the socle of $M$ is irreducible and generated by $m$. Without loss of generality one can always assume that $m$ is homogeneous.

If $H$ is connected, then by Lemma 9.4 of \cite{zub3} and Proposition 3.4 of \cite{zubgrish}, for any $m\in M$ a supersubmodule generated by $m$ coincides with $Dist(H)m$. 
\begin{lm}\label{(co)universal}
Let $G_r B^-_w$-supermodule $M$ be cogenerated by a $B_{r, w}^+ T$-primitive element $u$ of weight $\lambda$ such that $M$ satisfies the condition
$M_{\mu}\neq 0$ implies $\mu\leq_w\lambda$.
Then it can be embedded into $\Hat{Z}'_{r, w}(\lambda^{|u|})$. 

Symmetrically, if $G_r B_w^+$-supermodule $N$ is generated by a $B^-_{r, w} T$-primitive element $v$ of weight $\lambda$, then it is an epimorphic image of
$\Hat{Z}_{r, w}(\lambda^{|v|})$. 
\end{lm}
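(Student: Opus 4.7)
The plan is to apply Frobenius reciprocity to reduce each half of the lemma to a question about $B_w^{\pm}$-module maps into or out of the one-dimensional supermodule $K_\lambda^{|v|}$ or $K_\lambda^{|u|}$.

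For the second (epimorphism) statement, coinduction adjunction gives
\[
\mathrm{Hom}_{G_r B_w^+}\bigl(\Hat Z_{r,w}(\lambda^{|v|}),\,N\bigr) \;=\; \mathrm{Hom}_{B_w^+}\bigl(K_\lambda^{|v|},\,N|_{B_w^+}\bigr).
\]
I would produce the required $G_r B_w^+$-epimorphism from a $B_w^+$-morphism $K_\lambda^{|v|}\to N$, $1\mapsto v$. For this to be $B_w^+$-equivariant the action of $Dist(B_w^+)$ on $v$ must agree with the character $\lambda$. Using Lemma \ref{decompositions} to write $Dist(G_r B_w^+) \cong Dist(U^-_{r,w})\otimes Dist(B_w^+)$, together with the $U^-_{r,w}$-invariance of $v$ coming from its $B^-_{r,w}T$-primitivity, one verifies the needed $B_w^+$-equivariance by PBW commutation inside $Dist(G_r B_w^+)$. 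Surjectivity is immediate because the image contains $v$ and $N=Dist(G_r B_w^+)v$.

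For the first (embedding) statement, the induction adjunction
\[
\mathrm{Hom}_{G_r B_w^-}\bigl(M,\,\Hat Z'_{r,w}(\lambda^{|u|})\bigr) \;=\; \mathrm{Hom}_{B_w^-}\bigl(M|_{B_w^-},\,K_\lambda^{|u|}\bigr)
\]
reduces the problem to producing a $B_w^-$-equivariant functional on $M$ of weight $\lambda$ and parity $|u|$ that does not vanish on $u$. Cogeneration of $M$ by the $B^+_{r,w}T$-primitive element $u$ together with the weight bound $M_\mu\neq 0 \Rightarrow \mu \leq_w \lambda$ forces $\mathrm{soc}\,M$ to coincide with the simple $G_r B_w^-$-supermodule $\Hat L'_{r,w}(\lambda^{|u|})$ identified in Lemma \ref{soclesandtops}, and this socle embeds canonically into $\Hat Z'_{r,w}(\lambda^{|u|})$. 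Once a nonzero map $\Psi\colon M\to\Hat Z'_{r,w}(\lambda^{|u|})$ is in hand, injectivity is automatic: any nonzero kernel would contain the irreducible $\mathrm{soc}\,M$, contradicting that $\Psi$ is nonzero on the socle.

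The main obstacle is the compatibility check: the primitivity hypothesis is formulated with respect to the Frobenius-kernel Borel $B^{\pm}_{r,w}T$, whereas the output of Frobenius reciprocity wants $B_w^{\pm}$-equivariance. Bridging these requires the scheme decomposition $G_r B_w^{\pm} \simeq U^{\mp}_{r,w}\times B_w^{\pm}$ from Lemma \ref{decompositions} together with careful tracking of the $U^{\mp}_{r,w}$- and $T$-actions via the PBW decomposition of the relevant distribution superalgebras.
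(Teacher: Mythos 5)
Your overall strategy -- reduce both halves to Frobenius reciprocity for $ind^{G_rB_w^-}_{B_w^-}$ and for $coind^{G_rB_w^+}_{B_w^+}=Dist(G_rB_w^+)\otimes_{Dist(B_w^+)}(-)$, and then check equivariance of a single map determined by $u$ or $v$ -- is the right one (the paper itself only says "modify Lemma 9.1 of [Zub2]", whose argument is of exactly this type). But both equivariance checks, which are the entire content of the lemma, are left undone, and the mechanism you propose for one of them is wrong. For the embedding statement, after correctly reducing to "produce a $B_w^-$-equivariant functional $M\to K_\lambda^{|u|}$ not vanishing on $u$", you never produce one: identifying $\mathsf{soc}\,M$ with $\Hat{L}'_{r,w}(\lambda^{|u|})$ and embedding that socle into $\Hat{Z}'_{r,w}(\lambda^{|u|})$ only yields a map defined on $\mathsf{soc}\,M$, and it does not extend to $M$ for free, since $\Hat{Z}'_{r,w}(\lambda^{|u|})$ is not injective (quoting the classification of simples here is also circular, as Lemma \ref{simples} is deduced from this lemma). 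The missing step is the actual proof: take any homogeneous functional $f$ of parity $|u|$ vanishing on all $M_\mu$ with $\mu\neq\lambda$ and with $f(u)\neq 0$; the hypothesis $M_\mu\neq 0\Rightarrow\mu\leq_w\lambda$ forces $Dist(U_w^-)^+M\subseteq\sum_{\mu<_w\lambda}M_\mu\subseteq\ker f$, so $f$ is automatically $B_w^-$-equivariant, its adjoint $\Psi$ satisfies $\mathrm{ev}\circ\Psi=f$, hence $\Psi(u)\neq 0$, and your concluding socle argument for injectivity then works.

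For the surjection statement the gap is more serious: the claim that $U^-_{r,w}$-invariance of $v$ plus "PBW commutation" gives $\phi v=\lambda(\phi)v$ for all $\phi\in Dist(B_w^+)$ is false. What is needed is $Dist(U_w^+)^+v=0$, and annihilation by the lowering operators of $U^-_{r,w}$ gives no control over raising operators. Concretely, for $G=GL(1|1)$, $r=1$, $w=1$, let $f\in Dist(U^-_1)$ be the odd lowering operator and $v=f\otimes 1\in\Hat{Z}_1(\lambda)$; then $v$ is $B^-_1T$-primitive and generates $\Hat{Z}_1(\lambda)$ whenever $p\nmid(\lambda,\alpha)$, yet the raising operator sends $v$ to $(\lambda,\alpha)(1\otimes 1)\neq 0$, and $\Hat{Z}_1(\lambda)$ is certainly not a quotient of $\Hat{Z}_1((\lambda-\alpha)^{|v|})$. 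This also shows that the weight condition $N_\mu\neq 0\Rightarrow\mu\leq_w\lambda$ must be read into the "symmetric" half of the statement; with it, $Dist(U_w^+)^+v$ lies in weight spaces $>_w\lambda$ and is therefore zero, so $1\mapsto v$ is a legitimate $B_w^+$-map and surjectivity follows from $N=Dist(G_rB_w^+)v$ as you say. In short: the one hypothesis that actually drives both halves -- the weight bound -- is the one your argument never uses for the equivariance checks.
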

\begin{proof}
The proof can be modified from Lemma 9.1 of \cite{zub2}.
\end{proof}
As a result of Lemma \ref{(co)universal}, we will say that $\Hat{Z}'_{r, w}(\lambda^a)$ is a {\it couniversal} object in $G_r B^-_w$-$\mathsf{SMod}$ 
and $\Hat{Z}_{r, w}(\lambda^a)$ is a {\it universal} object in $G_r B^+_w$-$\mathsf{SMod}$.
The same arguments as in Lemma \ref{(co)universal} show that $\mathcal{Z}'_{r, w}(\lambda^a)$ and $\mathcal{Z}_{r, w}(\lambda^a)$
are couniversal and universal objects in $G_r T$-$\mathsf{SMod}$, respectively and $Z'_{r, w}(\lambda^a)$ and $Z_{r, w}(\lambda^a)$ are couniversal and universal objects in $G_r$-$\mathsf{SMod}$.
\begin{lm}\label{simples}
Every irreducible $G_r B^-_w$-supermodule is isomorphic to a unique supermodule $\Hat{L}'_{r, w}(\lambda^a)$, and every irreducible $G_r B^+_w$-supermodule is isomorphic 
to a unique supermodule $\Hat{L}_{r, w}(\lambda^a)$. 
Additionally, every irreducible $G_r T$-supermodule is isomorphic to a unique supermodule $\mathcal{L}_{r, w}(\lambda^a)$, and every irreducible $G_r$-supermodule 
is isomorphic to a unique supermodule $L_{r, w}(\lambda^a)$. 
\end{lm}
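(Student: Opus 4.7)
My plan is to combine the (co)universal properties stated after Lemma \ref{(co)universal} with the existence of a primitive vector in any simple object, following the classical template of II.3.10 and II.9.6 in \cite{jan}. I would first treat the $G_rT$-$\mathsf{SMod}$ case, as it is the cleanest. Given an irreducible supermodule $L$, since the torus $T$ is a subgroup and acts semisimply, I would decompose $L=\bigoplus_{\mu\in X(T)}L_\mu$. By Lemma \ref{decompositions}, the multiplication $U^+_{r,w}\times B^-_{r,w}T\to G_rT$ is an isomorphism of superschemes, so in particular $Dist(U^+_{r,w})$ is finite-dimensional. Consequently, for any nonzero $v\in L_\mu$ the space $Dist(U^+_{r,w})v$ is finite-dimensional with weights in $\mu+\sum_{\alpha\in\Pi_w}\mathbb{N}\alpha$, forcing the set of weights of $L$ to be bounded above in the order $<_w$. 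I would then pick a $<_w$-maximal weight $\lambda$ together with a nonzero homogeneous $v\in L_\lambda$ of parity $a$; by maximality $v$ is $B^+_{r,w}T$-primitive. The universal property of $\mathcal{Z}_{r,w}(\lambda^a)$ yields a surjection $\mathcal{Z}_{r,w}(\lambda^a)\twoheadrightarrow L$, and simplicity of $L$ gives $L\cong \mathcal{L}_{r,w}(\lambda^a)$.

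For the $G_rB_w^{\pm}$-$\mathsf{SMod}$ categories, the same argument goes through since $T\subseteq B_w^{\pm}\subseteq G_rB_w^{\pm}$ and Lemma \ref{(co)universal} applies directly, producing $L\cong \Hat{L}_{r,w}(\lambda^a)$ or $\Hat{L}'_{r,w}(\lambda^a)$. The $G_r$-case is more delicate because $T\not\subseteq G_r$, so no $T$-weight decomposition is available a priori. My resolution would exploit that $T$ normalizes $G_r$: every irreducible $G_r$-supermodule $L$ appears as a composition factor of the restriction to $G_r$ of a suitable irreducible $G_rT$-supermodule $\widetilde{L}$, and the $G_rT$ case already handled delivers a parameterization that restricts correctly. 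Alternatively, one can repeat the weight-bound argument directly inside $Dist(G_r)$, with weights now valued in $X(T_r)=X(T)/p^rX(T)$, then lift to $X(T)$ in order to invoke the $G_r$-analogue of Lemma \ref{(co)universal}.

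Uniqueness in all four cases follows from Remark \ref{weights}: the simple supermodule has $\lambda$ as its unique $<_w$-maximal weight, its $\lambda$-weight space is one-dimensional and spanned by a vector of parity $a$, so both $\lambda$ and $a$ are invariants of the isomorphism class. The main obstacle I anticipate is the $G_r$ case: because the weight of the $B^+_{r,w}$-primitive vector is intrinsically defined only modulo $p^rX(T)$, the uniqueness claim has to be interpreted in $X(T_r)\times\mathbb{Z}_2$, and the reduction from $G_r$ to $G_rT$ (or the direct Clifford-type argument) must be set up carefully so that the parities line up. The remaining three cases are essentially a direct translation of the classical argument, using the (co)universal objects already introduced.
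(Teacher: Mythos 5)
Your argument is essentially the paper's: locate a $B^+_{r,w}T$-primitive (highest-weight) vector in the irreducible supermodule and invoke the (co)universal property of Lemma \ref{(co)universal} — the paper merely produces that vector as a weight vector inside $L^{U^+_{r,w}}\neq 0$ (using nilpotency of $Dist(U^+_{r,w})^+$) rather than via a $<_w$-maximal weight, and dismisses the remaining three cases as ``similar''. Your extra care with the $G_r$ case, where the highest weight is intrinsically defined only in $X(T)/p^rX(T)$ and uniqueness must be read modulo $p^rX(T)$, is a legitimate refinement that the paper's one-line treatment suppresses.
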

\begin{proof}
Let $L$ be an irreducible $G_r B^-_w$-supermodule. Then $L$ is (co)generated by arbitrary element $v\in L^{U^+_{r, w}}\setminus 0$, 
and therefore $L=Dist(G_r B^-_w)v=Dist(B_w^-)v$. If $v$ has a weight $\lambda$, then $L_{\mu}\neq 0$ implies $\mu\leq_w\lambda$. 
Using Lemma \ref{(co)universal} we conclude that $L\simeq\Hat{L}'_{r, w}(\lambda^{|v|})$. The proof of the remaining cases is similar.
\end{proof}
According to \cite{zub2}, \S 7, the category $G$-$\mathsf{SMod}$ has a self-duality $M\to M^{<t>}$ induced by the supertransposition.  
\begin{lm}\label{equivalences}
The functor $M\to M^{<t>}$ induces an equivalence $G_r B_w^-$-$\mathsf{SMod}\simeq G_r B_w^+$-$\mathsf{SMod}$ and self-dualities of the categories $G_r T$-$\mathsf{SMod}$ and $G_r$-$\mathsf{SMod}$ such that $$\Hat{Z}'_{r, w}(\lambda^a)^{<t>}\simeq \Hat{Z}_{r, w}(\lambda^a), \ \mathcal{Z}'_{r, w}(\lambda^a)^{<t>}\simeq \mathcal{Z}_{r, w}(\lambda^a), \ Z'_{r, w}(\lambda^a)^{<t>}\simeq Z_{r, w}(\lambda^a).$$
As a consequence, there are natural isomorphisms
\[\Hat{L}'_{r, w}(\lambda^a)^{<t>}\simeq \Hat{L}_{r, w}(\lambda^a), \ \mathcal{L}_{r, w}(\lambda^a)^{<t>}\simeq \mathcal{L}_{r, w}(\lambda^a), \ L_{r, w}(\lambda^a)^{<t>}\simeq L_{r, w}(\lambda^a).\]
\end{lm}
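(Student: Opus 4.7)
The plan is to identify the functor $M\mapsto M^{<t>}$ with the contravariant duality induced by the supertransposition anti-automorphism $st$ of $G$, which fixes $T$ pointwise and exchanges $B_w^+$ with $B_w^-$. First, I would recall from \S 7 of \cite{zub2} that $M^{<t>}$ is the super-dual $M^*$ with its natural action transported via $st$. Since $st$ acts trivially on the diagonal torus, one obtains $(M^{<t>})_\mu\simeq (M_\mu)^*$ as $K$-superspaces for every $\mu\in X(T)$; in particular $(K_\lambda^a)^{<t>}\simeq K_\lambda^a$. Combined with $st(B_w^\pm)=B_w^\mp$, $st(T)=T$, and $st(G_r)=G_r$, this establishes that $<t>$ restricts to a contravariant equivalence $G_r B_w^-$-$\mathsf{SMod}\simeq G_r B_w^+$-$\mathsf{SMod}$ and to contravariant self-dualities of $G_r T$-$\mathsf{SMod}$ and $G_r$-$\mathsf{SMod}$.

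Next I would apply this duality to the induced supermodule $\Hat{Z}'_{r,w}(\lambda^a)=ind^{G_r B_w^-}_{B_w^-} K_\lambda^a$. A contravariant equivalence interchanges induction and coinduction; combining this with the identification $(K_\lambda^a)^{<t>}\simeq K_\lambda^a$, together with Lemma \ref{doubleduality} and its Frobenius-thickened analogue derived from Proposition \ref{thesameforGH'}, yields
\[
\Hat{Z}'_{r,w}(\lambda^a)^{<t>} \simeq coind^{G_r B_w^+}_{B_w^+} K_\lambda^a = \Hat{Z}_{r,w}(\lambda^a).
\]
Parallel computations with the pairs $(G_r T, B^\pm_{r,w}T)$ and $(G_r, B^\pm_{r,w})$ deliver the remaining isomorphisms for $\mathcal{Z}$ and $Z$. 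For the simples, I would invoke the fact that a contravariant equivalence sends socles to tops: by Lemma \ref{soclesandtops}, $\Hat{L}'_{r,w}(\lambda^a)$ is the socle of $\Hat{Z}'_{r,w}(\lambda^a)$, so its $<t>$-image is the top of $\Hat{Z}_{r,w}(\lambda^a)$, namely $\Hat{L}_{r,w}(\lambda^a)$. For $\mathcal{L}_{r,w}(\lambda^a)$ and $L_{r,w}(\lambda^a)$ the same reasoning shows the $<t>$-image is simple with the same $(\lambda,a)$-data, and Lemma \ref{simples} forces it to be isomorphic to the original.

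The step I expect to demand the most care is the interchange of $ind$ and $coind$ for the group $G_r B_w^\pm$, which is not infinitesimal; Lemma \ref{doubleduality} assumes infinitesimality of the ambient supergroup, so I would need to confirm via Proposition \ref{thesameforGH'} (applied with $G=G_r$, $H'=B_w^\pm$) that no extra modular-character twist appears, using that the characters $\alpha'|_{H'}\chi'^{-1}$ restrict trivially to the one-dimensional $K_\lambda^a$. The $G_r T$-case is handled by the same device, while the $G_r$-case is immediate from Lemma \ref{doubleduality}.
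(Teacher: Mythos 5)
Your route is genuinely different from the paper's, which settles everything in one line via (co)universality: since $M\mapsto M^{<t>}$ is an exact contravariant equivalence that fixes $T$-weights and exchanges $B_w^+$ with $B_w^-$ (Lemma 5.4 of \cite{zub4}), the supermodule $\Hat{Z}'_{r,w}(\lambda^a)^{<t>}$ is generated by a $B^-_{r,w}T$-primitive vector of weight $\lambda$, hence by Lemma \ref{(co)universal} it is an epimorphic image of $\Hat{Z}_{r,w}(\lambda^a)$, and comparison of formal characters (available from Lemma \ref{decompositions} independently of this lemma) upgrades the epimorphism to an isomorphism. Your treatment of the simples (a contravariant equivalence sends socles to tops, then Lemma \ref{simples}) is exactly right and is what the paper intends.

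However, the justification you offer for the step you yourself flag as the crux is incorrect: the characters $\alpha'|_{H'}\chi'^{\pm 1}$ of Proposition \ref{thesameforGH'} do \emph{not} restrict trivially. For $H'=B_w^{\pm}$ the relevant modular character is $\mp 2((p^r-1)\rho_0(w)+\rho_1(w))$ by Lemma \ref{someactions}, and there is in addition the parity shift $\Pi^{|mn|}$ of Remark \ref{parity}; this is precisely why Proposition \ref{duality} identifies $\Hat{Z}_{r,w}(\lambda)$ with $ind^{G_rB_w^+}_{B_w^+}K^{|mn|}_{\lambda-2((p^r-1)\rho_0(w)+\rho_1(w))}$ rather than with a shift-free induced module. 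If you factor $M^{<t>}$ as the plain linear dual followed by the supertransposition twist, the dual of $\Hat{Z}'_{r,w}(\lambda)$ sits at the weight $-\lambda+2((p^r-1)\rho_0(w)+\rho_1(w))$, and you must then verify that the twist both swaps $B_w^{\pm}$ and negates weights so that the shift cancels only after combining with Proposition \ref{duality} — nontrivial bookkeeping, and also dangerously close to circularity, since the dual-module isomorphisms in Proposition \ref{duality} are themselves deduced from the present lemma. The clean repair is already contained in your opening sentence: $M\mapsto M^{<t>}$ is a contravariant equivalence commuting with restriction to $B_w^{\mp}$, so by uniqueness of adjoints it must carry the right adjoint $ind^{G_rB_w^-}_{B_w^-}$ to the left adjoint $coind^{G_rB_w^+}_{B_w^+}$; together with $(K_\lambda^a)^{<t>}\simeq K_\lambda^a$ this gives $\Hat{Z}'_{r,w}(\lambda^a)^{<t>}\simeq\Hat{Z}_{r,w}(\lambda^a)$ with no modular characters appearing at all, and the $G_rT$ and $G_r$ cases follow identically.
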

\begin{proof}
All statements follow by (co)universality of the corresponding supermodules and by Lemma 5.4 of \cite{zub4}.
\end{proof}
Combining our Lemmas \ref{decompositions} and \ref{equivalences} with Theorem 10.1 of \cite{zub1} we obtain
\[\Hat{Z}'_{r, w}(\lambda^a)|_{G_r T}\simeq\mathcal{Z}'_{r, w}(\lambda^a), \ 
\Hat{Z}_{r, w}(\lambda^a)|_{G_r T}\simeq\mathcal{Z}_{r, w}(\lambda^a),\]
\[\Hat{Z}'_{r, w}(\lambda^a)|_{G_r}\simeq Z'_{r, w}(\lambda^a), \ 
\Hat{Z}_{r, w}(\lambda^a)|_{G_r}\simeq Z_{r, w}(\lambda^a).\]
Moreover, modifying the proof of Lemma 9.2 of \cite{zub2} one can show that
\[\mathsf{soc}_{G_r}\Hat{Z}'_{r, w} (\lambda^a)=\mathsf{soc}_{G_r T} \Hat{Z}'_{r, w} (\lambda)=\mathsf{soc}_{G_r B_w^-} \Hat{Z}'_{r, w} (\lambda^a)\] and
\[\mathsf{rad}_{G_r} \Hat{Z}_{r, w} (\lambda^a)=\mathsf{rad}_{G_r T} \Hat{Z}_{r, w} (\lambda^a)=\mathsf{rad}_{G_r B^+_w} \Hat{Z}_{r, w} (\lambda^a).\]

For ${\bf s}\in\{{\bf r}, {\bf l}\}$ denote by $\chi^-_{r, {\bf s}}(w)$ and $\chi^+_{r, {\bf s}}(w)$, respectively the characters corresponding to the actions 
of the supergroup $B_w^-$ on $\int_{{\bf s}, Dist(B^-_{r, w})}$ and of the supergroup $B_w^+$ on $\int_{{\bf s}, Dist(B^+_{r, w})}$, respectively. 

The following lemma generalizes Proposition II.3.4 of \cite{jan}.
\begin{lm}\label{someactions} The character $\chi^-_{r, {\bf r}}(w)$ equals $-2((p^r -1)\rho_0(w) +\rho_1(w))$ and the character 
$\chi^+_{r, {\bf r}}(w)$ equals $2((p^r -1)\rho_0(w) +\rho_1(w))$.
\end{lm}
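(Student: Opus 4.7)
My plan is to exhibit a concrete nonzero element of $\int_{{\bf r}, Dist(B^-_{r,w})}$ and read off its weight under the adjoint action of $T$. Because $B_w^-/U_w^- \simeq T$ and the unipotent radical $U_w^-$ admits only the trivial character (its coordinate superalgebra has no nontrivial group-likes), restriction gives an isomorphism $X(B_w^-) \simeq X(T)$, so the $T$-weight of any nonzero integral determines $\chi^-_{r, {\bf r}}(w)$. The formula for $\chi^+_{r, {\bf r}}(w)$ then follows by swapping $\Phi^-_w$ for $\Phi^+_w$ throughout.

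Mimicking the PBW argument in the proof of Lemma \ref{decompositions}, the multiplication map furnishes a superscheme isomorphism $U^-_{r,w}\times T_r \simeq B^-_{r,w}$, and $U^-_{r,w}$ further decomposes, in any fixed PBW order, as $\prod_{\alpha\in\Phi^-_w} U_{\alpha, r}$, where $U_{\alpha, r}$ is the $r$-th Frobenius kernel of the root supersubgroup of weight $\alpha$. Passing to distributions, $Dist(B^-_{r,w})$ acquires the PBW-type basis of monomials $\prod_\alpha e_\alpha^{(a_\alpha)} \prod_\beta e_\beta^{s_\beta}\,u$, where $\alpha$ runs through even roots of $\Phi^-_w$ with $0\le a_\alpha\le p^r-1$, $\beta$ runs through odd roots of $\Phi^-_w$ with $s_\beta\in\{0,1\}$, and $u$ ranges over a basis of $Dist(T_r)$. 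I claim that
\[
\nu_w^- \;=\; \prod_{\alpha\in\Phi^-_w,\, p(\alpha)=0} e_\alpha^{(p^r-1)}\; \prod_{\beta\in\Phi^-_w,\, p(\beta)=1} e_\beta \cdot u_{T_r},
\]
with $u_{T_r}$ spanning $\int_{{\bf r}, Dist(T_r)}$, is a right integral on $Dist(B^-_{r,w})$. Granting this, each $e_\alpha^{(p^r-1)}$ carries $T$-weight $(p^r-1)\alpha$, each $e_\beta$ carries $\beta$, and $u_{T_r}$ is $T$-fixed because $T$ is abelian so $Ad$ restricts trivially to $T_r$. Using $\sum_{\alpha\in\Phi^-_w,\,p(\alpha)=0}\alpha = -2\rho_0(w)$ and $\sum_{\beta\in\Phi^-_w,\,p(\beta)=1}\beta = -2\rho_1(w)$, the total weight is
\[
(p^r-1)\bigl(-2\rho_0(w)\bigr) + \bigl(-2\rho_1(w)\bigr) \;=\; -2\bigl((p^r-1)\rho_0(w)+\rho_1(w)\bigr),
\]
matching the stated formula for $\chi^-_{r,{\bf r}}(w)$, while swapping signs gives $\chi^+_{r,{\bf r}}(w) = 2((p^r-1)\rho_0(w)+\rho_1(w))$.

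The principal obstacle is verifying that $\nu_w^-$ really is a right integral for the twisted multiplication of $Dist(B^-_{r,w})$, which is not the pure tensor product of smaller Hopf superalgebras. Right-multiplying $\nu_w^-$ by a single generator---a divided power $e_\gamma^{(k)}$ with $k\ge 1$ and $\gamma$ even, an odd root vector $e_\beta$, or an element of $Dist(T_r)^+$---and straightening via the supercommutator relations $[h,e_\alpha]=\alpha(h)e_\alpha$ and $[e_\alpha,e_\beta]\subset\mathfrak{b}^-_w$, together with the divided-power identities $e_\alpha^{(i)}e_\alpha^{(j)}=\binom{i+j}{i}e_\alpha^{(i+j)}$, yields only PBW monomials that either exceed the truncation bound $p^r-1$ in some even-root slot, repeat an odd-root factor and vanish by $e_\beta^2=0$, or land in $Dist(T_r)^+\cdot u_{T_r}=0$. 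The lower-order correction terms produced by straightening lie in strictly smaller PBW filtration and are controlled by a straightforward induction; alternatively, one may invoke the general formula for integrals on a smash/semidirect product of Hopf superalgebras. Once this integral-property is in place, the lemma reduces to the character bookkeeping carried out above.
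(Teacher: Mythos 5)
Your argument is correct, but it is genuinely different from the paper's proof. The paper does not construct the integral at all: it computes $Ber(Ad(t)|_{Lie(B_w^\pm)})$ and $\det((Ad(t)|_{Lie(B_w^\pm)})_{11})$ and then invokes a ready-made formula (Proposition 5.11 of \cite{zubfr}) expressing the character on $\int_{{\bf r}, Dist(H_r)}$ in terms of these two determinants of the adjoint action on the Lie superalgebra. Your route instead exhibits the integral explicitly as the top PBW monomial and reads off its $T$-weight; this is more self-contained (it essentially reproves the special case of the cited formula) at the cost of the verification you correctly identify as the principal obstacle. Two remarks on that verification. First, the reduction to $U^-_{r,w}$ is clean: since $B^-_{r,w}=U^-_{r,w}\rtimes T_r$, $Dist(B^-_{r,w})$ is a smash product, and if $\nu_U$ is a right integral of $Dist(U^-_{r,w})$ and $u_{T_r}$ one of $Dist(T_r)$, then $\nu_U u_{T_r}\,a=\sum \nu_U\,(Ad(u_1)a)\,u_2=\epsilon(a)\,\nu_U u_{T_r}$ because $Ad$ preserves the counit; so your "alternatively, invoke the smash-product formula" is the right move and costs nothing. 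Second, for $\nu_U\cdot Dist(U^-_{r,w})^+=0$ your filtration induction is the one loose point: the lower-order straightening terms do \emph{not} individually die by overfull even slots or repeated odd factors (e.g.\ $e_{\gamma}^{(p^r-1)}e_{\gamma'}$ with $\gamma+\gamma'$ a root produces honest nonzero PBW monomials), so "controlled by a straightforward induction" hides the real content. The cleanest fix avoids straightening altogether: $Dist(U^-_{r,w})$ is $X(T)$-graded under $Ad(T)$ with weight set $\{\sum a_\gamma\gamma : 0\le a_\gamma\le c_\gamma\}$, $\gamma\in\Phi^-_w$; for any homogeneous $a\in Dist(U^-_{r,w})^+$ of weight $\mu\in\mathbb{N}\Phi^-_w\setminus 0$, the product $\nu_U a$ has weight $\sum c_\gamma\gamma+\mu$, which cannot be written as $\sum a_\gamma\gamma$ with $a_\gamma\le c_\gamma$ because $\mathbb{N}\Phi^-_w\cap(-\mathbb{N}\Phi^-_w)=0$; hence $\nu_U a=0$ by pure weight reasons. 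With that substitution your proof is complete and the character bookkeeping matches the paper's formulas.
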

\begin{proof}
For every $t\in T$ we have
\[Ber(Ad(t)|_{Lie(B_w^-)})=2(-\rho_0(w)+\rho_1(w))(t), \ \det((Ad(t)|_{Lie(B_w^-)})_{11})=-2\rho_1(w)(t)\]
and  
\[Ber(Ad(t)|_{Lie(B_w^+)})=2(\rho_0(w)-\rho_1(w))(t), \ \det((Ad(t)|_{Lie(B_w^+)})_{11})=2\rho_1(w)(t).\] 
The application of Proposition 5.11 of \cite{zubfr}, which is also valid for left integrals, concludes the proof. 
\end{proof}
\begin{rem}\label{parity} By Proposition 5.1 of \cite{zubfr} both superspaces $\int_{{\bf s}, Dist(B^-_{r, w})}$ and $\int_{{\bf s}, Dist(B^+_{r, w})}$ 
have the same parity $mn \pmod 2$. Denote this parity by $|mn|$.
\end{rem}
The following proposition superizes Lemma II.9.2 of \cite{jan}.
\begin{pr}\label{duality} For every $\lambda\in X(T)$ there are the isomorphisms
\[\begin{aligned}&\Hat{Z}_{r, w}(\lambda)\simeq ind_{B_w^+}^{G_r B_w^+} K^{|mn|}_{\lambda -2((p^r -1)\rho_0(w) +\rho_1(w))}\\
&\simeq \Pi^{|mn|}(\Hat{Z}_{r, w} (2((p^r -1)\rho_0(w) +\rho_1(w))-\lambda)^*\end{aligned}\] 
and 
\[\begin{aligned}&\Hat{Z}'_{r, w}(\lambda)\simeq coind^{G_r B_w^-}_{B_w^-} K^{|mn|}_{\lambda -2((p^r -1)\rho_0(w) +\rho_1(w))} \\
&\simeq \Pi^{|mn|}(\Hat{Z}'_{r, w} (2((p^r -1)\rho_0(w) +\rho_1(w))-\lambda)^*.\end{aligned}\]
Also, 
$\mathsf{ch}(\Hat{Z}_{r, w}(\lambda))=\mathsf{ch}(\Hat{Z}'_{r, w}(\lambda))=e^{\lambda}\prod_{\alpha\in\Phi_w^+ , p(\alpha)=0}\frac{1-e^{-p^r\alpha}}{1-e^{-\alpha}}
\prod_{\alpha\in\Phi_w^+, p(\alpha)=1}(1+e^{-\alpha}),$ 
$\Hat{Z}_{r, w}(\lambda+p^r\mu)\simeq \Hat{Z}_{r, w}(\lambda)\otimes p^r\mu \text{ and } \Hat{Z}'_{r, w}(\lambda+p^r\mu)\simeq \Hat{Z}'_{r, w}(\lambda)\otimes p^r\mu$
for all $\mu\in X(T)$.
\end{pr}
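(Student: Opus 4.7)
The plan is to deduce everything from a single application of Proposition \ref{thesameforGH'}, taking $(G, G', H, H') = (G_r, G_r B_w^+, B_{r,w}^+, B_w^+)$ for the $\Hat{Z}_{r,w}$ statements and symmetrically with $B_w^-$ for the $\Hat{Z}'_{r,w}$ statements. The sufficient condition in Remark \ref{asuffcond} is provided by Lemma \ref{decompositions}, which gives the splittings $G_r \simeq U_{r,w}^\mp \times B_{r,w}^\pm$ and $G_r B_w^\pm \simeq U_{r,w}^\mp \times B_w^\pm$ of superschemes, so the hypothesis is in place.

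The key computation is identifying the characters $\alpha'$, $\chi'$ and the parities $a, b$ appearing in Proposition \ref{thesameforGH'}. By Remark \ref{parity}, the integral on $Dist(B_{r,w}^\pm)$ has parity $a = |mn|$; on the other hand, the odd part of $Lie(G) = gl(m|n)$ has dimension $2mn$, so the integral on $Dist(G_r)$ has parity $b = 0$, giving $a + b = |mn|$. For $\chi'$, Lemma \ref{someactions} yields $\chi^+_{r,r}(w) = \nu := 2((p^r - 1)\rho_0(w) + \rho_1(w))$ in the $B_w^+$ case (and $-\nu$ in the $B_w^-$ case). For $\alpha'$, I would apply the same Berezinian and determinant recipe that proves Lemma \ref{someactions}, but to $Lie(G)$ instead of $Lie(B_w^\pm)$; since the even and odd roots of $GL(m|n)$ both come in $\pm$ pairs, $Ber(Ad|_{Lie(G)})$ and $\det((Ad|_{Lie(G)})_{11})$ are trivial on $T$, hence $\alpha' = 0$. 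Substituting into Proposition \ref{thesameforGH'} gives
\[
\Hat{Z}_{r,w}(\lambda) \simeq ind^{G_r B_w^+}_{B_w^+} \Pi^{|mn|} K_{\lambda} \otimes K_{-\nu} = ind^{G_r B_w^+}_{B_w^+} K^{|mn|}_{\lambda - \nu},
\]
and the parallel computation produces $\Hat{Z}'_{r,w}(\lambda) \simeq coind^{G_r B_w^-}_{B_w^-} K^{|mn|}_{\lambda - \nu}$.

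For the dual formula $\Hat{Z}_{r,w}(\lambda) \simeq \Pi^{|mn|}(\Hat{Z}_{r,w}(\nu - \lambda))^*$, I would use the identity $(coind^{GH'}_{H'} M)^* \simeq ind^{GH'}_{H'} M^*$ (the $GH'$-generalization of Lemma \ref{doubleduality} invoked in the proof of Proposition \ref{thesameforGH'}) with $M = K_{\nu - \lambda}$, and match the result with $\Hat{Z}_{r,w}(\lambda)$ via the first isomorphism already established. The character formula is immediate from Lemma \ref{decompositions}: the splitting $Dist(G_r B_w^+) \simeq Dist(U_{r,w}^-) \otimes Dist(B_w^+)$ identifies $\Hat{Z}_{r,w}(\lambda) \simeq Dist(U_{r,w}^-) \otimes K_\lambda$ as a $T$-supermodule, and the PBW-type basis of $Dist(U_{r,w}^-)$ described in the proof of Lemma \ref{decompositions} supplies the advertised product over even and odd roots. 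The equality $\mathsf{ch}(\Hat{Z}_{r,w}(\lambda)) = \mathsf{ch}(\Hat{Z}'_{r,w}(\lambda))$ then follows from Lemma \ref{equivalences}, since the supertranspose is $T$-equivariant. Finally, $K_{p^r\mu}$ is trivial on $B_{r,w}^+$ (as $p^r\mu$ vanishes on $T_r$), so it inflates to a $G_r B_w^+$-supermodule trivial on $G_r$, and the projection formula for coinduction delivers $\Hat{Z}_{r,w}(\lambda + p^r\mu) \simeq \Hat{Z}_{r,w}(\lambda) \otimes p^r\mu$. The main obstacle will be the bookkeeping of parities together with the Berezinian-based verification that $\alpha' = 0$; once these are pinned down, the remainder of the proof is essentially substitution into Proposition \ref{thesameforGH'}.
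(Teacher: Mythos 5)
Your proposal is correct and follows essentially the same route as the paper: Proposition \ref{thesameforGH'} (with the splitting hypothesis supplied by Lemma \ref{decompositions} via Remark \ref{asuffcond}), the parity count of Remark \ref{parity}, and the modular characters from Lemma \ref{someactions} together with the triviality of the action on $\int_{{\bf l}, Dist(G_r)}$ give the first isomorphism; the ind/coind duality of Lemma \ref{doubleduality} gives the second; the $Dist(U^{\mp}_{r,w})$-splitting gives the character formula; and the tensor identity gives the translation by $p^r\mu$. The paper cites the integral computations of its companion reference directly where you rederive $\alpha'=0$ by the Berezinian argument, but this is the same content, not a different method.
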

\begin{proof}
To prove the first isomorphism, combine our Proposition \ref{thesameforGH'} and our Remark \ref{parity} with Proposition 5.11 and Lemma 6.1 of \cite{zubfr}. 
Then Lemma \ref{equivalences} implies the second isomorphism. The statement about characters follows from our Lemma \ref{decompositions} and Lemma 8.2 and Remark 8.3 from \cite{zub1}. Finally, the last isomorphisms are trivial consequence of the tensor identity (cf. \cite{jan}, Lemma II.9.2(4, 5)).
\end{proof}
Let $w=w_0 w_1$ be the regular decomposition of $w$. Then \cite{zub1}, \S 7 implies $\rho_0(w)=\rho(w_0)=w_0\rho_0$ and $\rho_1(w)=w_0\rho_1(w_1)$.
\begin{lm}\label{characters}
For every $w\in S_{m+n}$, the formal characters of supermodules $\Hat{Z}_r(\lambda)$ and $\Hat{Z}_{r, w}(\lambda+(p^r -1)(\rho_0(w)-\rho_0)+(\rho_1(w)-\rho_1))$ coincide.
\end{lm}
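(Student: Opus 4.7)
The plan is to apply the explicit character formula from Proposition \ref{duality} to both supermodules and to deduce the equality by a factor-by-factor comparison of the products indexed by $\Phi^+$ and $\Phi_w^+$. The shift $(p^r-1)(\rho_0(w) - \rho_0) + (\rho_1(w) - \rho_1)$ in the highest weight of $\Hat{Z}_{r,w}$ should turn out to be exactly the exponential correction accumulated when each factor indexed by a root in $\Phi_w^+ \cap \Phi^-$ is rewritten as a factor indexed by its negative in $\Phi^+ \cap \Phi_w^-$.

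First I would split the product over $\Phi_w^+$ into the part indexed by $\Phi_w^+ \cap \Phi^+$, which already matches the corresponding piece of the formula for $\Hat{Z}_r(\lambda)$, and the part indexed by $\Phi_w^+ \cap \Phi^-$. For each even root $\alpha$ in the second set, substitute $\alpha = -\beta$ with $\beta \in \Phi^+ \cap \Phi_w^-$ and use
\[
\frac{1 - e^{p^r \beta}}{1 - e^{\beta}} = e^{(p^r - 1)\beta}\,\frac{1 - e^{-p^r \beta}}{1 - e^{-\beta}},
\]
which follows at once by pulling $-e^{p^r\beta}$ and $-e^{\beta}$ out of numerator and denominator. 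For each odd root $\alpha$ in the same set, the analogous substitution gives $(1 + e^{\beta}) = e^{\beta}(1 + e^{-\beta})$. After all these substitutions, the pieces over $\Phi_w^+ \cap \Phi^+$ and over $\Phi^+ \cap \Phi_w^-$ reassemble, separately for each parity, into the full products over $\Phi^+$ that appear in $\mathsf{ch}(\Hat{Z}_r(\lambda))$, and an overall exponential prefactor $e^{(p^r - 1)\gamma_0 + \gamma_1}$ is accumulated, where $\gamma_i = \sum\{\beta : \beta \in \Phi^+ \cap \Phi_w^-,\ p(\beta) = i\}$.

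It then remains to identify $\gamma_i$ with $\rho_i - \rho_i(w)$. Since $\beta \mapsto -\beta$ swaps $\Phi^+ \cap \Phi_w^-$ with $\Phi^- \cap \Phi_w^+$, the symmetric-difference comparison of $\rho_i = \tfrac{1}{2}\sum_{\alpha \in \Phi^+,\, p(\alpha)=i}\alpha$ with $\rho_i(w) = \tfrac{1}{2}\sum_{\alpha \in \Phi_w^+,\, p(\alpha)=i}\alpha$ yields $\rho_i - \rho_i(w) = \gamma_i$ for $i = 0, 1$. Substituting the claimed weight $\mu = \lambda + (p^r - 1)(\rho_0(w) - \rho_0) + (\rho_1(w) - \rho_1)$ into $\mathsf{ch}(\Hat{Z}_{r,w}(\mu))$ cancels the accumulated exponential prefactor, leaving $e^{\lambda}$ times the standard product over $\Phi^+$, which is exactly $\mathsf{ch}(\Hat{Z}_r(\lambda))$.

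The only subtle point I anticipate is the asymmetric behaviour of the even and odd factors under the flip $\beta \mapsto -\beta$: flipping an even positive root contributes $e^{(p^r - 1)\beta}$, while flipping an odd positive root contributes only $e^{\beta}$. This discrepancy is precisely what forces the asymmetric form of the shift in the statement, with the factor $(p^r - 1)$ appearing on the even part and no such factor on the odd part. Beyond carefully bookkeeping this asymmetry and the signs produced by the substitution $\alpha = -\beta$, no genuine difficulty remains.
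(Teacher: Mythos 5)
Your argument is correct and is exactly the paper's route: the paper's proof cites the decomposition $\Phi_a=(\Phi^+_w)_a\sqcup -(\Phi^+_w)_a$ and defers to the character formula of Proposition \ref{duality} together with Jantzen II.9.3, and your factor-by-factor rewriting of the roots in $\Phi_w^+\cap\Phi^-$ (with the $(p^r-1)\beta$ shift for even roots and the plain $\beta$ shift for odd roots) is precisely the computation being invoked there. The identification $\gamma_i=\rho_i-\rho_i(w)$ and the resulting cancellation of the exponential prefactor are carried out correctly.
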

\begin{proof}
For $a\in\mathbb{Z}_2$ denote by $(\Phi_w^+)_a$ the set $\{\alpha\in\Phi^+_w\mid p(\alpha)=a\}$.
Since 
\[\Phi_a=\Phi^+_a\sqcup -\Phi^+_a=(\Phi^+_w)_a\sqcup -(\Phi^+_w)_a\] for each $a\in\mathbb{Z}_2$, 
we can use Proposition \ref{duality} to modify the proof from \cite{jan}, II.9.3.
\end{proof}
\begin{rem}\label{othersupermodules}
Lemma \ref{characters} remains valid for the supermodules of types $\Hat{Z}', \mathcal{Z}, \mathcal{Z}', Z$ and $Z'$ as well.
\end{rem}
\begin{rem}\label{compositionfactors}
For every $\lambda\in X(T)$, the term $e^{\lambda}$ is the largest monomial of the formal character $\mathsf{ch}(\mathcal{L}_{r, w}(\lambda))$ 
with respect to the partial order $\leq_w$. In particular, all such characters are linearly independent.
Therefore if $\mathsf{ch}(M)=\sum a_{\lambda}\mathsf{ch}(\mathcal{L}_{r, w}(\lambda))$, then the non-negative integers $a_{\lambda}$ are uniquely defined by a $G_r T$-supermodule $M$and they coincide with multiplicities $[M : \mathcal{L}_{r, w}(\lambda)]$.  
\end{rem}
Denote by $\lambda<w>$ the weight $\lambda+(p^r -1)(\rho_0(w)-\rho_0)+(\rho_1(w)-\rho_1)$.
\begin{lm}\label{hom-s}
Let $(A,H)$ be one of the pairs $(\mathcal{Z}, G_r T), (\mathcal{Z}', G_r T), (Z, G_r)$ or $(Z', G_r)$. Then for every $w, w'\in S_{m+n}$ there is an isomorphism
\[Hom_H(A_{r, w}(\lambda<w>), A_{r, w'}(\lambda<w'>))\simeq K.\]
\end{lm}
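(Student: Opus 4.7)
The plan is to reduce the four cases to the core case $(\mathcal{Z}, G_rT)$ and use Frobenius reciprocity plus a linear-functional argument. By Lemma \ref{equivalences}, the self-duality $M\mapsto M^{<t>}$ exchanges $\mathcal{Z}\leftrightarrow\mathcal{Z}'$ and $Z\leftrightarrow Z'$ while reversing Hom, so the $\mathcal{Z}'$- and $Z'$-cases follow from the $\mathcal{Z}$- and $Z$-cases after swapping the two arguments. Since every $G_rT$-morphism is a $G_r$-morphism, the $G_rT$-Hom embeds into the $G_r$-Hom, so the content of the $G_r$-case is the upper bound.

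For $(\mathcal{Z}, G_rT)$, Frobenius reciprocity for the coinduced module yields
\[
\mathrm{Hom}_{G_rT}\!\bigl(\mathcal{Z}_{r,w}(\lambda\langle w\rangle),\mathcal{Z}_{r,w'}(\lambda\langle w'\rangle)\bigr)\simeq\mathcal{Z}_{r,w'}(\lambda\langle w'\rangle)^{U^+_{r,w}}_{\lambda\langle w\rangle}.
\]
By Lemma \ref{characters} and Remark \ref{othersupermodules} this target has the same formal character as $\mathcal{Z}_r(\lambda)$, whose weights are $\lambda-\sum n_\alpha\alpha$ with $0\le n_\alpha\le p^r-1$ for even $\alpha$ and $n_\alpha\in\{0,1\}$ for odd $\alpha$. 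Setting $\Psi:=\Phi^+\setminus\Phi^+_w$, the formulas $\rho_0-\rho_0(w)=\sum_{\alpha\in\Psi,\,p(\alpha)=0}\alpha$ and $\rho_1(w)-\rho_1=-\sum_{\alpha\in\Psi,\,p(\alpha)=1}\alpha$ give
\[
\lambda-\lambda\langle w\rangle=(p^r-1)\sum_{\alpha\in\Psi,\,p(\alpha)=0}\alpha+\sum_{\alpha\in\Psi,\,p(\alpha)=1}\alpha,
\]
corresponding to the extremal choice $n_\alpha^{\mathrm{ext}}=p^r-1$ (resp.\ $1$) on $\Psi_{\mathrm{even}}$ (resp.\ $\Psi_{\mathrm{odd}}$) and zero elsewhere. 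For any weight $\mu=\lambda-\sum n_\alpha\alpha$, the identity
\[
\lambda\langle w\rangle-\mu=\sum_{\alpha\in\Psi}(n_\alpha^{\mathrm{ext}}-n_\alpha)(-\alpha)+\sum_{\alpha\in\Phi^+\cap\Phi^+_w}n_\alpha\alpha
\]
expresses $\lambda\langle w\rangle-\mu$ as a nonnegative combination of elements of $\Phi^+_w$ (using $-\alpha\in\Phi^+_w$ for $\alpha\in\Psi$), so $\mu\leq_w\lambda\langle w\rangle$, with equality iff the chosen decomposition is extremal. Two decompositions of $\lambda-\lambda\langle w\rangle$ would differ by $\sum c_\alpha\alpha=0$ with $c_\alpha\ge 0$ on $\Psi$ and $c_\alpha\le 0$ on $\Phi^+\cap\Phi^+_w$; applying any linear functional $\ell$ on $X(T)\otimes\mathbb{Q}$ strictly positive on $\Phi^+_w$ (e.g.\ $\ell(\epsilon_i)=-w^{-1}(i)$) forces all $c_\alpha=0$. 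Hence the $\lambda\langle w\rangle$-weight space is $1$-dimensional; since $\lambda\langle w\rangle$ is the strict $\leq_w$-maximum, $U^+_{r,w}$ acts by zero on it, and the Hom equals $K$.

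For the $G_r$-case I must rule out $U^+_{r,w}$-primitives of $T_r$-weight $\overline{\lambda\langle w\rangle}\pmod{p^rX(T)}$ whose $T$-weight differs from $\lambda\langle w\rangle$. Any such $T$-weight $\mu=\lambda\langle w\rangle+p^r\nu$ with $\nu\ne 0$ occurring in $\mathcal{Z}_{r,w'}(\lambda\langle w'\rangle)$ satisfies $\ell(\mu)<\ell(\lambda\langle w\rangle)$, and an explicit PBW calculation in $\mathrm{Dist}(U^-_{r,w'})\otimes K_{\lambda\langle w'\rangle}$ --- in the spirit of the arguments in \cite{sdoty}, adapted to the super setting with odd generators squaring to zero and even generators carrying divided powers --- shows that some root vector in $U^+_{r,w}$ acts nontrivially on any candidate $\mu$-weight primitive, producing a vector of strictly larger $\ell$-value and contradicting primitivity. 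Thus the $G_r$-Hom coincides with the $G_rT$-Hom and equals $K$. This final PBW analysis is the main technical obstacle, since the absence of full $T$-equivariance requires careful bookkeeping of the interactions between odd and divided-power even factors.
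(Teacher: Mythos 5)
Your treatment of the $(\mathcal{Z},G_rT)$ case is correct and is exactly the paper's route: the paper's proof is a one-line reduction to Lemma 3 of Doty via the character identity of Lemma \ref{characters}, and your Frobenius-reciprocity-plus-weight-maximality argument (including the verification that $\lambda\langle w\rangle$ is the unique $\leq_w$-maximal weight and occurs with multiplicity one, via the functional $\ell$) is precisely what that reduction unpacks. The reduction of the $\mathcal{Z}'$ and $Z'$ cases to the $\mathcal{Z}$ and $Z$ cases via the contravariant duality $M\mapsto M^{\langle t\rangle}$ of Lemma \ref{equivalences} is also fine.

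The genuine gap is the $(Z,G_r)$ case, and you have correctly diagnosed but not closed it. Over $G_r$ the adjunction only fixes the $T_r$-weight of the image of the generator, and — as your own example-free remark concedes — the $T_r$-weight space of residue $\overline{\lambda\langle w\rangle}$ in $Z_{r,w'}(\lambda\langle w'\rangle)$ genuinely contains $T$-weights $\mu=\lambda\langle w\rangle-\pi$ with $0\neq\pi\in p^rX(T)\cap\sum_{\beta\in\Phi^+_w}\mathbb{N}\beta$ (already for $GL(3)$ one has $(p^r-1)\alpha_1+(p^r-1)\alpha_2+(\alpha_1+\alpha_2)=p^r(\alpha_1+\alpha_2)$ within the allowed exponent bounds). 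Ruling out $U^+_{r,w}$-primitive vectors in those weight spaces is therefore the entire content of the $G_r$ statement, and "an explicit PBW calculation shows that some root vector acts nontrivially" is an assertion of the conclusion, not an argument: a putative primitive vector is by definition killed by all of $Dist(U^+_{r,w})^+$, so no single "produces a vector of larger $\ell$-value" step is available; one must actually analyze the commutation of the divided powers of $Dist(U^+_{r,w})$ against the PBW monomials of $Dist(U^-_{r,w'})$ (or, alternatively, decompose $Hom_{G_r}=\bigoplus_{\nu}Hom_{G_rT}(\mathcal{Z}_{r,w}(\lambda\langle w\rangle),\mathcal{Z}_{r,w'}(\lambda\langle w'\rangle)\otimes p^r\nu)$ using the tensor identity of Proposition \ref{duality} and kill the summands with $\nu\neq 0$). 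The paper delegates exactly this point to Remark 5.3 of \cite{zub4}; without that input or the omitted computation, your proof establishes the lemma only for the pairs $(\mathcal{Z},G_rT)$ and $(\mathcal{Z}',G_rT)$.
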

\begin{proof}
Since $\mathsf{ch}(A_{r, w}(\lambda<w>))=\mathsf{ch}(A_{r, w'}(\lambda<w'>))$, one can modify the proof of Lemma 3 of \cite{sdoty} using Remark 5.3 of \cite{zub4}, 
applied to an arbitrary $B_w^{\pm}$.
\end{proof}
Let $\tilde{w}$ denote the longest element in $S_{m+n}$.
\begin{lm}\label{atopof}
There is an isomorphism $\mathcal{Z}_{r, \tilde{w}}'(\lambda<\tilde{w}>)\simeq\mathcal{Z}_r(\lambda^{|mn|})$. Consequently, 
$\mathcal{L}_r(\lambda)$ is isomorphic (up to a parity shift) to the top of the supermodule $\mathcal{Z}'_{r, \tilde{w}}(\lambda<\tilde{w}>)$.
\end{lm}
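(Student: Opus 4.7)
The plan is to obtain the isomorphism as a direct application of the duality in Proposition \ref{duality}, extended to the $\mathcal{Z}'$-type by Remark \ref{othersupermodules}, after unwinding what the data $\tilde{w}$ and $\lambda\langle \tilde{w}\rangle$ mean.

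First I would record the combinatorics. Since $\tilde{w}$ is the longest element of $S_{m+n}$, we have $\Phi^+_{\tilde{w}}=-\Phi^+$, so $B^+_{\tilde{w}}=B^-$ and $B^-_{\tilde{w}}=B^+$; moreover $\rho_0(\tilde{w})=-\rho_0$ and $\rho_1(\tilde{w})=-\rho_1$. Consequently
\[
\lambda\langle \tilde{w}\rangle=\lambda+(p^r-1)(\rho_0(\tilde{w})-\rho_0)+(\rho_1(\tilde{w})-\rho_1)=\lambda-2(p^r-1)\rho_0-2\rho_1.
\]
By definition $\mathcal{Z}'_{r,\tilde{w}}(\lambda\langle \tilde{w}\rangle)=\mathrm{ind}^{G_rT}_{B^-_{r,\tilde{w}}T}K_{\lambda\langle \tilde{w}\rangle}=\mathrm{ind}^{G_rT}_{B^+_rT}K_{\lambda\langle \tilde{w}\rangle}$, while $\mathcal{Z}_r(\lambda^{|mn|})=\mathrm{coind}^{G_rT}_{B^+_rT}K^{|mn|}_{\lambda}$, so the two supermodules are induced and coinduced from the \emph{same} supersubgroup $B^+_rT$, only twisted by different weights.

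Next I would invoke the $\mathcal{Z}'$-analogue of Proposition \ref{duality} (valid by Remark \ref{othersupermodules}): for any $\nu\in X(T)$,
\[
\mathcal{Z}'_{r,w}(\nu)\simeq \mathrm{coind}^{G_rT}_{B^-_{r,w}T}K^{|mn|}_{\nu-2((p^r-1)\rho_0(w)+\rho_1(w))}.
\]
Specializing to $w=\tilde{w}$ and $\nu=\lambda\langle \tilde{w}\rangle$ and using $-2((p^r-1)\rho_0(\tilde{w})+\rho_1(\tilde{w}))=2((p^r-1)\rho_0+\rho_1)$, the exponent becomes
\[
\bigl(\lambda-2(p^r-1)\rho_0-2\rho_1\bigr)+2(p^r-1)\rho_0+2\rho_1=\lambda,
\]
and $B^-_{r,\tilde{w}}T=B^+_rT$, so the right-hand side is exactly $\mathrm{coind}^{G_rT}_{B^+_rT}K^{|mn|}_{\lambda}=\mathcal{Z}_r(\lambda^{|mn|})$. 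This gives the first claim. The consequence then follows from Lemma \ref{soclesandtops}, which identifies the top of $\mathcal{Z}_r(\lambda^{|mn|})$ with $\mathcal{L}_r(\lambda^{|mn|})$, i.e.\ with $\mathcal{L}_r(\lambda)$ up to the parity shift $\Pi^{|mn|}$.

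The only real obstacle is bookkeeping rather than any new idea: one has to (i) correctly translate $B^{\pm}_{\tilde{w}}$ into $B^{\mp}$, (ii) track the parity shift $|mn|$ coming from the parity of the integrals on $Dist(B^{\pm}_{r,w})$ as in Remark \ref{parity}, and (iii) confirm that the duality formula of Proposition \ref{duality}, which is stated for the $\Hat{Z}$'s, transfers to $\mathcal{Z}'_{r,\tilde{w}}$ via restriction to $G_rT$ as claimed in Remark \ref{othersupermodules}. Once these identifications are in place, everything else is the straightforward substitution above.
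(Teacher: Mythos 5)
Your argument is correct and essentially the paper's own: both rest on the identities $B^-_{r,\tilde{w}}=B^+_r$, $\rho_0(\tilde{w})=-\rho_0$, $\rho_1(\tilde{w})=-\rho_1$ and then a direct application of Proposition \ref{duality} restricted to $G_rT$ (the paper applies the first isomorphism at $w=1$ to rewrite $\mathcal{Z}_r(\lambda^{|mn|})$ as an induced module, while you apply the second at $w=\tilde{w}$ to rewrite $\mathcal{Z}'_{r,\tilde{w}}(\lambda\langle\tilde{w}\rangle)$ as a coinduced one --- the same computation read in the opposite direction). One tiny bookkeeping note: the transfer of Proposition \ref{duality} to the $\mathcal{Z}$-level is justified by the restriction isomorphisms recorded after Lemma \ref{equivalences}, not by Remark \ref{othersupermodules}, which concerns only formal characters.
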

\begin{proof}
Since $B^-_{r, \tilde{w}}=B^+_r, \rho_0(\tilde{w})=-\rho_0$ and $\rho_1(\tilde{w})=-\rho_1$, it implies 
\[\mathcal{Z}'_{r, \tilde{w}}(\lambda<\tilde{w}>)=ind^{G_r T}_{B^+_r T} K_{\lambda-2((p^r -1)\rho_0+\rho_1)}.\]
We finish the proof by applying the first isomorphism from Proposition \ref{duality}, restricted on $G_r T$.
\end{proof}

\section{Minimal parabolic supersubgroups and adjacent Borel supersubgroups}

For every subset $S\subseteq\Pi_w$ one can define a parabolic supersubgroup $P_w(S)$ in the following way. 
If $S=\{\alpha_{i_1},\ldots , \alpha_{i_r}\}$, then $P_w(S)$ is equal to the stabilizer of the flag
\[W_{j_1}\subseteq W_{j_2}\subseteq\ldots \subseteq W_{j_{m+n-r-1}},\]
where $j_1 <\ldots < j_{m+n-r-1}$ is a listing of elements of the set $\{1, \ldots, m+n-1\}\setminus\{m+n-i_1, \ldots, m+n-i_r\}$.
For example, if $S=\{\alpha_i\}$, then $P_w(\alpha_i)$ is the stabalizer of the flag
\[W_1\subseteq\ldots\subseteq W_{m+n-i-1}\subseteq W_{m+n-i+1}\subseteq\ldots\subseteq W_{m+n}.\] 
Denote by $S_i$ the supersubspace $Kv_{wi}+Kv_{w(i+1)}$ and set $H_{i,w}=Stab_G(S_i)\cap (\cap_{j\neq i, i+1}Stab_G(Kv_{wj}))$. 
It is clear that $H_{i, w}\simeq GL(2)\times T'$ whenever $\alpha_i$ is even; and if $\alpha_i$ is odd then $H_{i, w}\simeq GL(1|1)\times T'$, where
$T'(A)=\{t\in T(A)| t|_{S_i\otimes 1}=id_{S_i\otimes 1}\}$ for every $A\in\mathsf{SAlg}_K$.

Let $UP_w(\alpha_i)$ be the largest supersubgroup of $U_w^-$ whose elements act trivially on $W_{m+n-i+1}/W_{m+n-i-1}$. 
It follows from Lemma 11.1 of \cite{zub1} that $P_w(\alpha_i)=UP_w(\alpha_i)\rtimes H_{i, w}$. 
Moreover, for every positive integer $r$, the supersubgroup $P_{r, w}(\alpha_i) T$ can be decomposed as $UP_{r, w}(\alpha_i)\rtimes (H_{i, w})_r T$. 
\begin{rem}\label{coincidenceforodd}
It $\alpha_i$ is odd, then $(H_{i, w})_r T=H_{i, w}T$.
\end{rem}
Two Borel supergroups $B^-_w$ and $B^-_{w'}$ are {\it adjacent} via $\alpha_i\in\Pi_w$ if  $\Phi^+_{w'}=\Phi_w^+\setminus\{\alpha_i\}\bigcup\{-\alpha_i\}$. If $\alpha_i$ is odd, then 
$B^-_w$ and $B^-_{w'}$ are called {\it odd adjacent}; if $\alpha_i$ is even, they are called {\it even adjacent}.
It is clear that $UP_w(\alpha_i)\leq B^-_w$ and $B^-_{w'}\leq P_w(\alpha)$. Moreover, 
\[B^-_w=UP_w(\alpha_i)\rtimes (\mathcal{B}^-\times T') \text{ and }  B^-_{w'}=UP_w(\alpha_i)\rtimes (\mathcal{B}^+\times T'),\]
where $\mathcal{B}^-$ and $\mathcal{B}^+$ are the corresponding Borel supersubgroups of $GL(1|1)$ or of $GL(2)$, depending on the parity of $\alpha_i$ (cf. \cite{zub1}, \S 11). 
Analogously, we have
\[B^-_{r, w}=UP_{r, w}(\alpha_i)\rtimes (B_r^-\times T'_r) , \ B^-_{r, w'}=UP_{r, w}(\alpha_i)\rtimes (B^+_r\times T'_r),\]
\[B^-_{r, w} T=UP_{r, w}(\alpha_i)\rtimes (B_r^- T) \text{ and } B^-_{r, w'}=UP_{r, w}(\alpha_i)\rtimes (B_r^+ T).\]

To simplify our notations we will omit the subindices $w, w'$ and $i$. For example, $B_w^-$ and $B_{w'}^-$ are denoted just by $B$ and $B'$ correspondingly, 
$P_w(\alpha_i)$ by $P(\alpha)$ etc. 
Let $L^{(r)}_{P(\alpha)}(\lambda^a)$ denote the socle of $ind^{P(\alpha)_r T}_{B_r T} K^a_{\lambda}$ for $\lambda\in X(T)$ and $a\in\mathbb{Z}_2$. 
Since $P(\alpha)_r T\simeq \mathcal{U}^+_r\times B_r T$, where $\mathcal{U}^+_r$ is the unipotent radical of $\mathcal{B}^+_r$, the arguments analogous to those in the proof of Lemma \ref{soclesandtops} show that $L^{(r)}_{P(\alpha)}(\lambda^a)$ is irreducible.
\begin{pr}\label{inducedoverparabolic}
The following statements are valid.
\begin{enumerate}
\item $UP(\alpha)_r$ acts trivially on $ind^{P(\alpha)_r T}_{B_r T} K_{\lambda}^a$.
\item If $\alpha$ is odd, then $ind^{P(\alpha)_r T}_{B_r T} K^a_{\lambda}=L^{(r)}_{P(\alpha)}(\lambda^a)$ if and only if $p\not|(\lambda, \alpha)$. Moreover, in this case there is a unique (up to a scalar multiple) isomorphism 
\[g_{\alpha} : ind^{P(\alpha)_r T}_{B'_r T} K_{\lambda-\alpha}^{a+1}\to ind^{P(\alpha)_r T}_{B_r T} K^a_{\lambda}.\]
\item If $\alpha$ is odd and $p|(\lambda, \alpha)$, then $ind^{P(\alpha)_r T}_{B_r T} K^a_{\lambda}$ has a composition series
\[\begin{array}{c}
L^{(r)}_{P(\alpha)}((\lambda-\alpha)^{a+1}) \\
| \\
L^{(r)}_{P(\alpha)}(\lambda^a)
\end{array}
\]
and $ind^{P(\alpha)_r T}_{B'_r T} K^a_{\lambda}$ has a composition series
\[\begin{array}{c}
L^{(r)}_{P(\alpha)}((\lambda+\alpha)^{a+1}) \\
| \\
L^{(r)}_{P(\alpha)}(\lambda^a)
\end{array}.
\]
In addition, there is a unique (up to a scalar multiple) non-zero homomorphism 
\[g_{\alpha} : ind^{P(\alpha)_r T}_{B'_r T} K_{\lambda-\alpha}^{a+1}\to ind^{P(\alpha)_r T}_{B_r T} K^a_{\lambda}\]
such that its kernel and cokernel are both isomorphic to $L^{(r)}_{P(\alpha)}((\lambda-\alpha)^{a+1})$.
\item If $\alpha$ is even, then there is an unique (up to a scalar multiple) homomorphism
\[g_{\alpha} : ind^{P(\alpha)_r T}_{B'_r T} K_{\lambda-(p^r -1)\alpha}^a\to ind^{P(\alpha)_r T}_{B_r T} K^a_{\lambda}\]
such that the image of $g_{\alpha}$ is $L^{(r)}_{P(\alpha)}(\lambda^a)$.
\item If $\alpha$ is even and $L^{(r)}_{P(\alpha)}(\mu^b)$ is a composition factor of $ind^{P(\alpha)_r T}_{B_r T} K^a_{\lambda}$, where $b$ denotes the parity of the highest weight vector, then $\mu$ is $(\alpha, r)$-linked to $\lambda$ in the sense of \cite{sdoty}.  
Here the linkage is with respect to $B_{ev}=(B_{w_0})_{ev}$ and the corresponding $\rho_0(w)=\rho_0(w_0)$ is the half sum of positive roots of $B_{ev}$.
\end{enumerate}
\end{pr}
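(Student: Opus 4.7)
The plan is to reduce everything to the representation theory of the minimal Levi $(H)_rT$ — where $H\simeq GL(1|1)$ if $\alpha$ is odd and $H\simeq GL(2)$ if $\alpha$ is even — via the semidirect decomposition $P(\alpha)_rT \simeq UP(\alpha)_r \rtimes (H)_rT$, and then either carry out an explicit two-dimensional computation or invoke the known strong linkage principle for $GL(2)_rT$.

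First I would handle (1). Since $B_rT = UP(\alpha)_r \rtimes \mathcal{B}^-_rT$ and $UP(\alpha)_r$ acts trivially on $K^a_\lambda$, transitivity of induction for semidirect products (equivalently, Proposition \ref{thesameforGH'} applied with $G'=P(\alpha)_rT$, $G=UP(\alpha)_r$, $H'=\mathcal{B}^-_rT$) yields
\[
ind^{P(\alpha)_rT}_{B_rT} K^a_\lambda \;\simeq\; ind^{(H)_rT}_{\mathcal{B}^-_rT} K^a_\lambda,
\]
on which $UP(\alpha)_r$ acts trivially. All remaining statements can therefore be checked inside $(H)_rT$-$\mathsf{SMod}$, and by Remark \ref{coincidenceforodd}, for odd $\alpha$ even inside $HT$-$\mathsf{SMod}$.

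For parts (2) and (3), with $H\simeq GL(1|1)$, I would compute the induced module explicitly as a two-dimensional superspace. A $\mathcal{B}^-_rT$-primitive element $v$ of weight $\lambda$ and parity $a$ spans the socle; applying the positive odd root vector $E_\alpha\in Dist(H)_1$ produces a second vector $\bar v=E_\alpha v$ of weight $\lambda-\alpha$ and parity $a+1$, which together with $v$ spans the whole induced module. The relation $E_{-\alpha}E_\alpha + E_\alpha E_{-\alpha}$ evaluated in $Dist(GL(1|1))$, applied to $\bar v$, contributes the scalar $(\lambda,\alpha)\in K$, so $Kv$ is a submodule precisely when $(\lambda,\alpha)\equiv 0\pmod p$. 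This establishes simplicity for $p\nmid(\lambda,\alpha)$ and gives the claimed composition series otherwise. The intertwiner $g_\alpha$ is produced by Frobenius reciprocity applied to the $\mathcal{B}^+_rT$-primitive vector $\bar v$ (respectively a primitive vector in the opposite induced module); uniqueness up to scalar follows from Lemma \ref{hom-s}, or directly from the one-dimensionality of the appropriate $Hom$ space. An entirely symmetric calculation handles $ind^{P(\alpha)_rT}_{B'_rT}K^a_\lambda$.

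For parts (4) and (5), the reduction is to $H\simeq GL(2)$, where the induced modules coincide with the classical $ind^{GL(2)_rT}_{\mathcal{B}^\mp_rT}$ studied in Section II.3 of \cite{jan} and in \cite{sdoty}. The existence of a unique (up to scalar) nonzero map
\[
g_\alpha\colon ind^{P(\alpha)_rT}_{B'_rT}K^a_{\lambda-(p^r-1)\alpha}\longrightarrow ind^{P(\alpha)_rT}_{B_rT}K^a_\lambda
\]
whose image is the simple socle is Proposition II.3.19 of \cite{jan} (or Lemma 3 of \cite{sdoty}), again arising from Frobenius reciprocity applied to a $\mathcal{B}^+_rT$-primitive vector of weight $\lambda-(p^r-1)\alpha$; (5) is then exactly the $(\alpha,r)$-linkage statement of Lemma 4 of \cite{sdoty} transported to $GL(2)_rT$. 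The main obstacle is the bookkeeping in (2)--(3): one must verify that the $GL(1|1)$ computation respects the sign conventions in the bilinear form $(\,,\,)$ and the parity shifts, so that the dichotomy $p\mid(\lambda,\alpha)$ versus $p\nmid(\lambda,\alpha)$ is truly the one that governs simplicity and that the resulting length-two module is identified correctly with the stated composition series.
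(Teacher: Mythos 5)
Your route is essentially the paper's: reduce to the minimal Levi via $P(\alpha)_rT\simeq UP(\alpha)_r\rtimes (H_{i,w})_rT$, settle the odd case by an explicit $GL(1|1)$ computation, and quote Doty/Jantzen for the $GL(2)$ case. The paper does exactly this, except that it outsources the $GL(1|1)$ analysis to Propositions 11.5 and 12.2(1) of \cite{zub1}, performs the Levi reduction via Lemma 10.4 and Corollary 10.2 of \cite{zub1}, and then cites Lemma 1 and the proof of Proposition 1 of \cite{sdoty} for parts (4) and (5).

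Two local corrections. First, the reference for the reduction in (1) should not be Proposition \ref{thesameforGH'} (which compares $ind$ with $coind$); what is needed is the Mackey-type restriction isomorphism $(ind^{GH'}_{H'}M)|_G\simeq ind^G_H M|_H$ (Theorem 10.1 of \cite{zub1}, quoted at the start of Section 2). Second, your rank-one computation is internally inconsistent as written: in $ind^{P(\alpha)_rT}_{B_rT}K^a_\lambda$ the weight-$\lambda$ primitive vector $v$ \emph{cogenerates} the module (it spans the socle in the atypical case), so there every root vector kills it, and $\bar v$ cannot be $E_\alpha v$ --- which would in any case have weight $\lambda+\alpha$, not $\lambda-\alpha$. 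The correct bookkeeping is either to run the Verma-type computation in the coinduced module $Dist(H)\otimes_{Dist(\mathcal{B}^+)}K_\lambda$, where the generator $u$ does generate and $E_\alpha(E_{-\alpha}u)=\pm(\lambda,\alpha)u$ by the odd relation $E_\alpha E_{-\alpha}+E_{-\alpha}E_\alpha=H_\alpha$, and then transfer to the induced module via Proposition \ref{coind and ind} or Lemma \ref{equivalences}; or to observe from the character $e^{\lambda}(1+e^{-\alpha})$ that the induced module is two-dimensional and determine the $Dist(GL(1|1))$-action on a weight basis directly. With that fix the dichotomy $p\mid(\lambda,\alpha)$ versus $p\nmid(\lambda,\alpha)$ and the stated composition series come out as claimed, and the rest of your argument stands.
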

\begin{proof}
To prove the first three statements one can use our Remark \ref{coincidenceforodd} to modify the proofs of Proposition 11.5 and Proposition 12.2(1) of \cite{zub1}. To prove the last two statements we use Lemma 10.4 and Corollary 10.2 of \cite{zub1} to show that $ind^{P(\alpha)_r T}_{B_r T} K^a_{\lambda}\simeq ind^{H_r\mathcal{T}}_{\mathcal{B}_r\mathcal{T}} K_{\lambda}^a|_{\mathcal{B}}$, where $\mathcal{T}=T\cap\mathcal{B}$. Then we refer to Lemma 1 and to the proof of Proposition 1 (see the penultimate paragraph on page 138) of \cite{sdoty}.
\end{proof}

\section{Strong linkage principle for $G_r T$}

As an intermediate step towards proving the linkage principle for $G$, we establish a corresponding result for the Frobenius thickening $G_r T$.

Let us start with the observation that Remark \ref{compositionfactors} combined with Lemma \ref{characters} implies that, for every $w\in S_{m+n}$, $\mathcal{L}_r(\mu)$ is a composition factor of $\mathcal{Z}'_r(\lambda)$ if and only if it is a composition factor of $\mathcal{Z}'_{r, w}(\lambda<w>)$. 

Fix an ordering $\alpha_1, \ldots, \alpha_{mn}$ of all odd roots from $\Phi^+$ such that $\alpha_i< \alpha_j$ implies $i< j$. According to \cite{brunkuj}, \S 4, the longest element $\tilde{w}_1$ of $D_{m, n}$ can be written as $s_{\alpha_{mn}}s_{\alpha_{mn-1}}\ldots s_{\alpha_1}$. 
Analogously, we fix an ordering of the even roots $\beta_1, \ldots, \beta_N$, where $N=\frac{m(m-1)}{2}+\frac{n(n-1)}{2}$, from $\Phi^+$. Then $\tilde{w}_0$ can be written as 
$s_{\beta_N}\ldots s_{\beta_1}$.

Set $y_0=1$, $y_i=s_{\alpha_i}\ldots s_{\alpha_1}$ for $1\leq i\leq mn$, and $ y_j=s_{\beta_{j-mn}}\ldots s_{\beta_1}y_{mn}$ for $mn < j\leq mn +N$. Then, for every $1\leq i\leq m$, 
$B^-_{y_i}$ is odd adjacent to $B_{y_{i-1}}^-$ via $\alpha_i$; and for every $mn < j\leq mn +N$, $B^-_{y_j}$ is even adjacent to $B^-_{y_{j-1}}$ via $\beta_{j-mn}$.
\begin{lm}\label{f_i} For $1\leq i\leq mn+N$ there is a unique (up to a scalar multiple) homomorphism 
$f_i : \mathcal{Z}'_{r, y_i}(\lambda<y_i>)\to\mathcal{Z}'_{r, y_{i-1}}(\lambda<y_{i-1}>)$ constructed as follows. 
Let $P$ denote $P_{r, y_{i-1}}(\alpha_i)$ or $P_{r, y_{i-1}}(\beta_{i-mn})$, respectively provided $i\leq mn$ or $mn<i$, respectively. Then $f_i$ is given as follows.
\begin{enumerate}
\item If $1\leq i\leq mn$ and $p\not| (\lambda+\rho, \alpha_i)$, then $f_i=ind^{G_r T}_{P_r T} g_{\alpha_i}$ is an (even or odd) isomorphism.
\item If $1\leq i\leq mn$ and $p|(\lambda+\rho, \alpha_i)$, then $f_i=ind^{G_r T}_{P_r T} g_{\alpha_i}$ is an (even or odd) homomorphism such that its kernel and cokernel are isomorphic (up to a parity shift) to $ind^{G_r T}_{P_r T} L^{(r)}_P((\lambda-\alpha_i)<y_{i-1}>)$.
\item If $i > mn$, then $f_i=ind^{G_r T}_{P_r T} g_{\beta_{i-mn}}$.
\end{enumerate}
\end{lm}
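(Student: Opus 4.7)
The plan is to exhibit $f_i$ as the image of the intermediate-level map $g_\gamma$ of Proposition \ref{inducedoverparabolic} under the outer induction functor $ind^{G_r T}_{P_r T}$, where $P = P_{y_{i-1}}(\gamma)$ is the minimal parabolic containing both adjacent Borels and $\gamma$ is the simple root that flips sign (so $\gamma = \alpha_i$ for $i \le mn$, $\gamma = \beta_{i-mn}$ for $i > mn$). Transitivity of induction through the chain $B^-_{r,y} T \le P_r T \le G_r T$ identifies
\[
\mathcal{Z}'_{r,y}(\lambda{<}y{>}) \simeq ind^{G_r T}_{P_r T}\, ind^{P_r T}_{B^-_{r,y} T} K_{\lambda{<}y{>}}, \qquad y \in \{y_{i-1}, y_i\},
\]
and the inner modules are exactly the $P_r T$-modules governed by Proposition \ref{inducedoverparabolic} with $B = B^-_{r,y_{i-1}}$ and $B' = B^-_{r,y_i}$.

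Before applying $g_\gamma$ one must match the weight labels. Because flipping the sign of a single simple root $\gamma$ changes only $\rho_1$ (resp.\ $\rho_0$) when $\gamma$ is odd (resp.\ even), a direct computation gives
\[
\lambda{<}y_i{>} = \begin{cases} \lambda{<}y_{i-1}{>} - \alpha_i & \text{if } i \le mn,\\ \lambda{<}y_{i-1}{>} - (p^r-1)\beta_{i-mn} & \text{if } i > mn,\end{cases}
\]
which is exactly the weight shift built into $g_\gamma$ in parts (2)--(4) of Proposition \ref{inducedoverparabolic}. In the odd case one also has to translate the divisibility hypothesis: Proposition \ref{inducedoverparabolic}(2) requires $p \nmid (\mu, \alpha_i)$ for the inner label $\mu = \lambda{<}y_{i-1}{>}$, and I would establish $(\lambda{<}y_{i-1}{>}, \alpha_i) \equiv (\lambda+\rho, \alpha_i) \pmod p$ by expanding the definition of $\lambda{<}y_{i-1}{>}$ and using the identity $(\rho(y_{i-1}), \alpha_i) = 0$, which holds because $\alpha_i$ is isotropic and simple for the Borel $B^-_{y_{i-1}}$ (verifiable by direct inspection of how $\rho_0(y_{i-1})$ and $\rho_1(y_{i-1})$ pair against a simple odd root).

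Once the weights match, set $f_i := ind^{G_r T}_{P_r T} g_\gamma$. The outer functor $ind^{G_r T}_{P_r T}$ is exact because Lemma \ref{decompositions} (together with Remark \ref{asuffcond}) makes $Dist(G_r T)$ free as a right $Dist(P_r T)$-supermodule via a complementary factor inside $U^-_{r,y_{i-1}}$. Case (1) and case (3) of the lemma are then immediate from the corresponding properties of $g_\gamma$. In case (2) exactness transports the kernel and cokernel description of $g_{\alpha_i}$ into the stated kernel and cokernel of $f_i$, namely $ind^{G_r T}_{P_r T} L^{(r)}_P((\lambda - \alpha_i){<}y_{i-1}{>})$. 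Finally, uniqueness of each $f_i$ up to a scalar multiple is immediate from Lemma \ref{hom-s}, which guarantees
\[
\dim_K Hom_{G_r T}\bigl(\mathcal{Z}'_{r,y_i}(\lambda{<}y_i{>}),\, \mathcal{Z}'_{r,y_{i-1}}(\lambda{<}y_{i-1}{>})\bigr) = 1.
\]

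The main subtleties lie in the two matching checks: verifying the weight-shift congruence modulo $p$ (which rests on the identity $(\rho(w), \alpha) = 0$ for an isotropic simple root $\alpha$ of $\Pi_w$, something that is standard in characteristic zero Lie-superalgebra theory but should be spelled out here via direct computation), and making precise the exactness/flatness of $ind^{G_r T}_{P_r T}$ using the superalgebra machinery of Sections 1--2. Once those two points are in place the rest is essentially formal.
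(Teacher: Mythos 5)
Your proposal is correct and follows essentially the same route as the paper: transitivity of induction through $P_rT$, the weight-shift identities $\lambda{<}y_i{>}=\lambda{<}y_{i-1}{>}-\alpha_i$ (resp. $-(p^r-1)\beta_{i-mn}$), the congruence $(\lambda{<}y_{i-1}{>},\alpha_i)\equiv(\lambda+\rho,\alpha_i)\pmod p$ via $(\rho(y_{i-1}),\alpha_i)=0$ for the isotropic simple root (the paper cites Proposition 1.28 of Cheng--Wang for this), and exactness of $ind^{G_rT}_{P_rT}$ to transport Proposition \ref{inducedoverparabolic}. The only cosmetic difference is that the paper justifies exactness by the affineness of $G_rT/P_rT\simeq G_r/P_r$ rather than by your freeness-of-distributions splitting, and your explicit appeal to Lemma \ref{hom-s} for uniqueness is a clean way to make precise what the paper leaves implicit.
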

\begin{proof}
First observe that $\lambda<y_i>=\lambda<y_{i-1}>-\alpha_i=(\lambda-\alpha_i)<y_{i-1}>$ for $1\leq i\leq mn$ and $\lambda<y_i>=\lambda<y_{i-1}>-(p^r -1)\beta_{i-mn}$
for $mn+1\leq i\leq mn+N$.

Assume $i\leq mn$. Since $\alpha_i$ belongs to $\Pi_{y_{i-1}}$, Proposition 1.28 of \cite{chengwang} implies
\[(\lambda<y_{i-1}>, \alpha_i)\equiv (\lambda+\rho-\rho(y_{i-1}), \alpha_i)=(\lambda+\rho, \alpha_i)\pmod p.\]
Therefore $p|(\lambda<y_{i-1}>, \alpha_i)$ if and only if $p|(\lambda+\rho, \alpha_i)$. 

Combine Theorem 10.1 of \cite{zub1} and Theorem 0.1 of \cite{zub5} to obtain that $G_r T/P_r T\simeq G_r/P_r$ are affine superschemes. 
Therefore, the functor $ind^{G_r T}_{P_r T}$ is faithfully exact by Theorem 5.2 of \cite{zub3}, and all statements follow by Proposition \ref{inducedoverparabolic}. 
\end{proof}

The next proposition inspired by the work of Doty describes a single step of the strong linkage for $G_r T$. 

\begin{pr}\label{auxiliaryprop}
Assume that $\mathcal{L}_r(\mu)$ (or $\Pi\mathcal{L}_r(\mu)$) is a composition factor of $\mathcal{Z}'_r(\lambda)$. If $\mu\neq\lambda$, then either there is 
$\lambda' <\lambda$ such that $L_r(\mu)$ is a composition factor of $\mathcal{Z}_r'(\lambda')$ and $\lambda'$ is $(\alpha, r)$-linked to $\lambda$, 
where $\alpha\in\Phi^+$ and $p(\alpha)=0$, or $\mathcal{L}_r(\mu)$ is a composition factor of
$\mathcal{Z}'_r(\lambda-\alpha)$, where $\alpha\in\Phi^+$, $p(\alpha)=1$ and $p|(\lambda+\rho, \alpha)$. 
\end{pr}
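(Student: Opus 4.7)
The plan is to track the composition factor $\mathcal{L}_r(\mu)$ along the chain
\[\mathcal{Z}'_r(\lambda)=\mathcal{Z}'_{r,y_0}(\lambda<y_0>)\xleftarrow{f_1}\mathcal{Z}'_{r,y_1}(\lambda<y_1>)\xleftarrow{f_2}\cdots\xleftarrow{f_{mn+N}}\mathcal{Z}'_{r,y_{mn+N}}(\lambda<y_{mn+N}>)\]
produced by Lemma~\ref{f_i}. Lemma~\ref{characters} together with Remark~\ref{othersupermodules} forces every term of this chain to have the same formal character as $\mathcal{Z}'_r(\lambda)$, so $\mathcal{L}_r(\mu)$ is a composition factor of each of them. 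At the far right, Lemma~\ref{atopof} identifies $\mathcal{Z}'_{r,y_{mn+N}}(\lambda<y_{mn+N}>)$ up to parity with $\mathcal{Z}_r(\lambda)$, whose unique top is $\mathcal{L}_r(\lambda)$ by Lemma~\ref{soclesandtops}; since $\mu\neq\lambda$, the factor $\mathcal{L}_r(\mu)$ lies strictly inside the radical there. At the far left, $\mathcal{Z}'_r(\lambda)$ has unique socle $\mathcal{L}_r(\lambda)$ by the same Lemma~\ref{soclesandtops}.

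I would then locate the first index $i\in\{1,\ldots,mn+N\}$ at which $\mathcal{L}_r(\mu)$ appears as a composition factor of $\ker f_i$ or $\operatorname{coker} f_i$. Such an $i$ must exist: otherwise each $f_j$ is either an isomorphism (Lemma~\ref{f_i}(1)) or preserves the $\mathcal{L}_r(\mu)$-factor inside its image, so the sole copy of $\mathcal{L}_r(\mu)$ would survive from $\mathcal{Z}_r(\lambda)$ all the way through into $\mathcal{Z}'_r(\lambda)$ and land in its socle, forcing $\mu=\lambda$.

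Having fixed such an $i$, I split according to the parity of the corresponding root. If $i\leq mn$, then Lemma~\ref{f_i}(1) rules out $p\nmid(\lambda+\rho,\alpha_i)$, so $p\mid(\lambda+\rho,\alpha_i)$; Lemma~\ref{f_i}(2) identifies $\ker f_i$ and $\operatorname{coker} f_i$ up to parity with $ind^{G_r T}_{P_r T} L^{(r)}_P((\lambda-\alpha_i)<y_{i-1}>)$, while Proposition~\ref{inducedoverparabolic}(3) realises $L^{(r)}_P((\lambda-\alpha_i)<y_{i-1}>)$ as a quotient of $ind^{P_r T}_{B_r T} K^a_{(\lambda-\alpha_i)<y_{i-1}>}$; transitivity of induction then produces a $G_r T$-epimorphism $\mathcal{Z}'_{r,y_{i-1}}((\lambda-\alpha_i)<y_{i-1}>)\twoheadrightarrow\ker f_i$ (or $\operatorname{coker} f_i$), which forces $\mathcal{L}_r(\mu)$ to be a composition factor of $\mathcal{Z}'_{r,y_{i-1}}((\lambda-\alpha_i)<y_{i-1}>)$; a further application of Lemma~\ref{characters} transports this to $\mathcal{Z}'_r(\lambda-\alpha_i)$, which is the odd-linkage conclusion with $\alpha=\alpha_i$. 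If instead $i>mn$, set $\beta=\beta_{i-mn}$; then $f_i=ind^{G_r T}_{P_r T} g_\beta$ by Lemma~\ref{f_i}(3), and Proposition~\ref{inducedoverparabolic}(4)--(5) forces every composition factor $L^{(r)}_P(\nu^b)$ of $\ker f_i$ or $\operatorname{coker} f_i$ to satisfy that $\nu$ is $(\beta,r)$-linked to $\lambda<y_{i-1}>$; undoing the regular shift $\lambda<y_{i-1}>$ and reapplying Lemma~\ref{characters} produces a strictly smaller $\lambda'<\lambda$ with $\lambda'$ $(\beta,r)$-linked to $\lambda$, such that $\mathcal{L}_r(\mu)$ is a composition factor of $\mathcal{Z}'_r(\lambda')$, which is the even-linkage conclusion.

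The hardest step I anticipate is the bookkeeping around this ``first significant index'': justifying rigorously that such an $i$ must exist, via a careful multiplicity count balancing the top of $\mathcal{Z}_r(\lambda)$ against the socle of $\mathcal{Z}'_r(\lambda)$ and using the precise form of the kernel/cokernel in Lemma~\ref{f_i}, together with verifying that the parity shifts $|mn|$, the regular translations $\lambda<y_{i-1}>$, and the transitivity of $ind^{G_r T}_{P_r T}$ combine compatibly so that $\mathcal{L}_r(\mu)$ really reappears in $\mathcal{Z}'_r(\lambda-\alpha_i)$ (respectively, in $\mathcal{Z}'_r(\lambda')$) with the linkage relation to $\lambda$ claimed in the statement.
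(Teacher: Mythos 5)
Your proposal follows essentially the same route as the paper: push $\mathcal{L}_r(\mu)$ through the chain of maps $f_i$ from $\mathcal{Z}'_{r,\tilde{w}}(\lambda<\tilde{w}>)\simeq\mathcal{Z}_r(\lambda^{|mn|})$ down to $\mathcal{Z}'_r(\lambda)$, note that the composite has image $0$ or $\mathcal{L}_r(\lambda)$ so the factor must occur in some $\ker f_i$, and then split into the odd and even cases exactly as the paper does via Lemma \ref{f_i} and Proposition \ref{inducedoverparabolic}. The only slip is that $L^{(r)}_P((\lambda-\alpha_i)<y_{i-1}>)$ is the socle (hence a submodule), not a quotient, of $ind^{P_r T}_{B_r T} K_{(\lambda-\alpha_i)<y_{i-1}>}$, but this is harmless: either realization transports the composition factor into $\mathcal{Z}'_{r,y_{i-1}}((\lambda-\alpha_i)<y_{i-1}>)$ and hence, by Lemma \ref{characters} and Remark \ref{compositionfactors}, into $\mathcal{Z}'_r(\lambda-\alpha_i)$.
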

\begin{proof}
By Lemma \ref{atopof}, $\mathcal{L}_r(\mu)$ is a composition factor of $\mathcal{Z}'_{r, \tilde{w}}(\lambda<\tilde{w}>)$. Set $f=f_{mn+N}\cdot f_{mn+N-1}\cdot\ldots\cdot f_1$. If $f\neq 0$, then by Lemma \ref{atopof} the image of $f$ equals $\mathcal{L}_r(\lambda)$, the socle of $\mathcal{Z}_r(\lambda)$. Since $\mu\neq\lambda$, there is a positive integer $i$ such that $\mathcal{L}_r(\mu)$ is a composition factor of $\ker f_i$. 

If $i\leq mn$, then by Lemma \ref{f_i}(2), $p$ divides $(\lambda+\rho, \alpha_i)$ and $\mathcal{L}_r(\mu)$ is a composition factor of 
\[ind^{G_r T}_{P_r T} L^{(r)}_P((\lambda-\alpha_i)<y_{i-1}>)\subseteq
\mathcal{Z}'_{r, y_{i-1}}((\lambda-\alpha_i)<y_{i-1}>),\]
where $P=P_{y_{i-1}}(\alpha_i)$. 
Therefore Remarks \ref{othersupermodules} and \ref{compositionfactors} imply that $\mathcal{L}_r(\mu)$ is a composition factor of $\mathcal{Z}'_r(\lambda-\alpha_i)$. 

If $i> mn$, then we can use Proposition \ref{inducedoverparabolic}(4, 5) and Lemma \ref{f_i}(3) to modify the proof of Proposition 1 from \cite{sdoty}.
\end{proof}

The strong linkage principle for $G_r T$ now follows easily.

\begin{tr}\label{linkageoverG_r T}
If $\mathcal{L}_r(\mu)$ (or $\Pi\mathcal{L}_r(\mu)$) is a composition factor of $\mathcal{Z}'_r(\lambda)$ and $\mu\neq\lambda$, then there is a sequence $\lambda=\lambda_0, \lambda_1, \ldots, \lambda_t=\mu$ such that for each $1\leq i\leq t$ either $\lambda_{i}=\lambda_{i-1}-\alpha_i$, where $\alpha_i\in\Phi^+, p(\alpha_i)=1$ and 
$p|(\lambda_{i-1}+\rho, \alpha_i)$, or $\lambda_i <\lambda_{i-1}$ and $\lambda_i$ is $(\alpha_i, r)$-linked with $\lambda_{i-1}$, where $\alpha_i\in\Phi^+$ and $p(\alpha_i)=0$. 
\end{tr}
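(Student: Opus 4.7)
The plan is to iterate the single-step result of Proposition \ref{auxiliaryprop} and argue by induction. By Remark \ref{weights}, every composition factor of $\mathcal{Z}'_r(\lambda)$ has highest weight $\leq\lambda$, so we may write $\lambda-\mu=\sum_{\beta\in\Pi} c_\beta\beta$ with $c_\beta\geq 0$, and set $\mathrm{ht}(\lambda-\mu):=\sum_\beta c_\beta$. Since $\mu\neq\lambda$, we have $\mathrm{ht}(\lambda-\mu)\geq 1$. The induction will be on this non-negative integer.

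For the inductive step I would apply Proposition \ref{auxiliaryprop} to obtain $\lambda_1<\lambda$ such that $\mathcal{L}_r(\mu)$ (or $\Pi\mathcal{L}_r(\mu)$) remains a composition factor of $\mathcal{Z}'_r(\lambda_1)$, and the pair $(\lambda,\lambda_1)$ is of one of the two admissible types: either $\lambda_1=\lambda-\alpha$ with $\alpha\in\Phi^+$ odd and $p\mid(\lambda+\rho,\alpha)$, or $\lambda_1<\lambda$ is $(\alpha,r)$-linked to $\lambda$ for some even $\alpha\in\Phi^+$. In both alternatives the inequality $\lambda_1<\lambda$ forces $\lambda-\lambda_1$ to be a non-trivial non-negative integer combination of simple roots, so $\mathrm{ht}(\lambda_1-\mu)<\mathrm{ht}(\lambda-\mu)$.

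The recursion then runs as follows. If $\lambda_1=\mu$, the chain $\lambda,\lambda_1$ of length one suffices. Otherwise, the induction hypothesis applied to the pair $(\lambda_1,\mu)$ yields a chain $\lambda_1,\lambda_2,\ldots,\lambda_t=\mu$ of the required form, and prepending $\lambda_0=\lambda$ produces the desired sequence. The argument is essentially formal once Proposition \ref{auxiliaryprop} is available, so I do not anticipate a genuine obstacle; the only point to verify carefully is that both alternatives of that proposition indeed produce $\lambda_1$ strictly below $\lambda$ in the partial order, which is immediate ($\alpha\in\Phi^+$ in the odd case, and the bound $\lambda'<\lambda$ is explicit in the even case). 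This strict descent of the height function guarantees termination in finitely many steps.
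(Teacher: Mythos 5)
Your proposal is correct and follows essentially the same route as the paper: iterate Proposition \ref{auxiliaryprop} to produce a strictly descending chain $\lambda>\lambda_1>\lambda_2>\cdots$ with $\mathcal{L}_r(\mu)$ remaining a composition factor at each stage, and conclude by a termination argument. The only cosmetic difference is that you justify termination via strict descent of the height $\mathrm{ht}(\lambda-\mu)$, while the paper invokes finiteness of the interval $\{\pi\mid\mu\leq\pi\leq\lambda\}$; these are the same observation.
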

\begin{proof}
By Proposition \ref{auxiliaryprop}, there is $\lambda_1$ such that $\lambda_1 <\lambda$ and $\mathcal{L}_r(\mu)$ is a composition factor of $\mathcal{Z}'_r(\lambda_1)$. Then 
either $\lambda$ is $(\alpha, r)$-linked with $\lambda_1$ for an even positive root $\alpha$, or
$\lambda_1=\lambda-\alpha$ for an odd positive root $\alpha$ such that $p|(\lambda+\rho, \alpha)$.
Repeating the same arguments for $\lambda_1$ we will find $\lambda_2$ etc. Since $\mu\leq\ldots <\lambda_2 <\lambda_1 <\lambda$ and the interval $\{\pi\mid \mu\leq\pi\leq\lambda\}$ is finite, the theorem follows.
\end{proof}
\begin{rem}\label{replacing}
The statement of Theorem \ref{linkageoverG_r T} remains true if we replace $\mathcal{Z}'_r(\lambda)$ by $\mathcal{Z}_r(\lambda)$.
\end{rem}

\section{Blocks over $GL(m|n)$}

Recall from \cite{zub4} that every irreducible $G$-supermodule $L$ is uniquely defined by its highest weight $\lambda$ and by the parity $a$ of a primitive vector $v$ 
of weight $\lambda$; that is $L=L(\lambda^a)$, where $a\in\mathbb{Z}_2$. 
We have already observed that
\[Ext^1_G(L(\lambda^a), L(\mu^b))\simeq \Pi^{a+b}Ext^1_G(L(\lambda), L(\mu)),\] 
where $\Pi$ is a parity shift functor, which implies that blocks of simple $G$-supermodules correspond uniquely to equivalence classes of dominant weights $X(T)^+$, 
that we will also call blocks.
Therefore, instead of working with blocks of simple supermodules one can work with blocks of dominant weights. The block containing a dominant weight $\lambda$ will be denoted by 
$B(\lambda)$.

We start with a simple technical result.
\begin{lm}\label{Verma}
If $r\leq r'$, then the $G_r T$-supermodule $\mathcal{Z}_r(\lambda)$ is naturally embedded into $\mathcal{Z}_{r'}(\lambda)|_{G_r T}$. 
Moreover, a superspace $\lim\limits_{\rightarrow}\mathcal{Z}_r(\lambda)$ has a natural structure of the $Dist(G)$-supermodule such that
\[\lim\limits_{\rightarrow}\mathcal{Z}_r(\lambda)\simeq M(\lambda)=Dist(G)\otimes_{Dist(B^+)} K_{\lambda}.\]
\end{lm}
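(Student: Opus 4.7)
The plan is to reduce both claims to a PBW-type decomposition furnished by Lemma \ref{decompositions}. Applying that lemma (with $w=1$) we get the superscheme isomorphism $U_r^-\times B_r^+T \simeq G_r T$, which its proof refines to a superspace isomorphism $Dist(G_r T)\simeq Dist(U_r^-)\otimes Dist(B_r^+T)$ that is immediately checked to be an isomorphism of right $Dist(B_r^+T)$-supermodules. Tensoring over $Dist(B_r^+T)$ with $K_\lambda$ identifies
\[
\mathcal{Z}_r(\lambda)\simeq Dist(U_r^-)\otimes K_\lambda
\]
as superspaces. For $r\leq r'$, the superalgebra inclusions $Dist(G_r T)\subseteq Dist(G_{r'}T)$ and $Dist(B_r^+T)\subseteq Dist(B_{r'}^+T)$ induce the canonical map $\mathcal{Z}_r(\lambda)\to \mathcal{Z}_{r'}(\lambda)$, $\phi\otimes v\mapsto\phi\otimes v$; under the identification above it becomes the obvious embedding $Dist(U_r^-)\otimes K_\lambda\hookrightarrow Dist(U_{r'}^-)\otimes K_\lambda$, which is injective and $Dist(G_r T)$-equivariant by construction.

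For the direct limit I would use that any distribution at the identity has finite order, hence $Dist(G)=\bigcup_r Dist(G_r)$; combined with $Dist(G_r)\subseteq Dist(G_r T)$ this yields $Dist(G)=\bigcup_r Dist(G_r T)$, and the analogous equalities hold for $B^+$ and $U^-$. Because the transition maps $\mathcal{Z}_r(\lambda)\hookrightarrow\mathcal{Z}_{r'}(\lambda)$ are compatible with the inclusions of the acting superalgebras, $\lim\limits_{\rightarrow}\mathcal{Z}_r(\lambda)$ acquires a natural $\lim\limits_{\rightarrow}Dist(G_r T)=Dist(G)$-supermodule structure, and the canonical maps $\phi\otimes v\mapsto\phi\otimes v$ assemble into a $Dist(G)$-equivariant map $\Psi\colon\lim\limits_{\rightarrow}\mathcal{Z}_r(\lambda)\to M(\lambda)$.

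To see that $\Psi$ is an isomorphism I would pass to the colimit in the decompositions $Dist(G_r T)\simeq Dist(U_r^-)\otimes Dist(B_r^+T)$, which are nested as $r$ grows, to obtain a right $Dist(B^+)$-supermodule decomposition $Dist(G)\simeq Dist(U^-)\otimes Dist(B^+)$. Hence $M(\lambda)\simeq Dist(U^-)\otimes K_\lambda$, and under this identification $\Psi$ reduces to the canonical isomorphism $\lim\limits_{\rightarrow}(Dist(U_r^-)\otimes K_\lambda)\simeq Dist(U^-)\otimes K_\lambda$ coming from $\lim\limits_{\rightarrow}Dist(U_r^-)=Dist(U^-)$. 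The main obstacle is really bookkeeping: verifying that the bases of $Dist(G_r T)$ over $Dist(B_r^+T)$ exhibited in the proof of Lemma \ref{decompositions} are nested as $r$ increases, so that the colimit of these decompositions yields the claimed global decomposition of $Dist(G)$. Once this is established, both the isomorphism and the $Dist(G)$-equivariance of $\Psi$ follow formally.
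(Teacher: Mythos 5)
Your proposal is correct and follows essentially the same route as the paper, whose entire proof is ``apply the arguments from the proof of Lemma \ref{decompositions}'': the PBW bases $\prod f_i^{(t_i)}\prod f_i^{s_i}$ with $0\le t_i\le p^r-1$ exhibited there are visibly nested as $r$ grows, which settles the bookkeeping point you flag and yields both the embedding and the identification of the colimit with $Dist(U^-)\otimes K_\lambda\simeq M(\lambda)$.
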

\begin{proof}
Apply the arguments from the proof of Lemma \ref{decompositions}.
\end{proof}
Let $X_r(T)^+$ denote the set \[\{\lambda\in X(T)\mid 0\leq \lambda_i-\lambda_{i+1}\leq p^r-1 \text{ for } 1\leq i\leq m+n, i\neq m\}.\]
It is clear that $X_r(T)^+\subseteq X(T)^+$ and for every dominant weight $\lambda\in X(T)^+$ there is a positive integer $r$ such that $\lambda\in X_{r'}(T)^+$ for all $r'\geq r$. Moreover, \cite{zubfr}, \S 7 and \cite{shuweiq}, \S 4 imply that $L(\lambda)|_{G_r}=L_r(\lambda)$ and $L(\lambda)|_{G_r T}=\mathcal{L}_r(\lambda)$. 

The following proposition captures the essence of the linkage for $G$.
\begin{pr}\label{stronglinkageforGL}
If $L(\mu)$ (or $\Pi L(\mu)$) is a composition factor of the Weyl supermodule $V(\lambda)$, then 
there is a positive integer $r$ and a sequence $\lambda=\lambda_0, \lambda_1, \ldots, \lambda_t=\mu$ such that for each $1\leq i\leq t$
either $\lambda_{i}=\lambda_{i-1}-\alpha_i$, where $\alpha_i\in\Phi^+, p(\alpha_i)=1$ and $p|(\lambda_{i-1}+\rho, \alpha_i)$, or $\lambda_i <\lambda_{i-1}$ and $\lambda_i$ is $(\alpha_i, r)$-linked with $\lambda_{i-1}$, where $\alpha_i\in\Phi^+$ and $p(\alpha_i)=0$.
\end{pr}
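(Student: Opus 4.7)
The plan is to reduce the statement to the strong linkage principle for $G_r T$ established in Theorem \ref{linkageoverG_r T} together with Remark \ref{replacing}, by restricting the Weyl supermodule to a sufficiently large Frobenius thickening.

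Recall that the Weyl supermodule $V(\lambda)$ is a finite-dimensional highest-weight $G$-supermodule generated by a $B^+$-primitive vector $v_{\lambda}$ of weight $\lambda$, and so it is a quotient of the universal Verma supermodule $M(\lambda) = Dist(G)\otimes_{Dist(B^+)} K_{\lambda}$ from Lemma \ref{Verma}. I choose a positive integer $r$ so as to satisfy two conditions simultaneously. First, both $\lambda$ and $\mu$ must belong to $X_r(T)^+$; this is possible since every dominant weight lies in $X_{r'}(T)^+$ for all sufficiently large $r'$, and under this condition the identities $L(\lambda)|_{G_r T} = \mathcal{L}_r(\lambda)$ and $L(\mu)|_{G_r T} = \mathcal{L}_r(\mu)$ recalled in the paragraph preceding the proposition apply. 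Second, I arrange that $V(\lambda) = Dist(G_r T)\cdot v_{\lambda}$, which is feasible because $V(\lambda)$ is finite-dimensional and $Dist(G) = \bigcup_s Dist(G_s T)$.

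Under these choices $v_{\lambda}$ is a $B^+_r T$-primitive vector of weight $\lambda$ generating the $G_r T$-supermodule $V(\lambda)|_{G_r T}$. By the universal property of $\mathcal{Z}_r(\lambda)$ established in Lemma \ref{(co)universal}, this yields an epimorphism $\mathcal{Z}_r(\lambda) \twoheadrightarrow V(\lambda)|_{G_r T}$ of $G_r T$-supermodules. Since $L(\mu)$ (respectively $\Pi L(\mu)$) is a composition factor of $V(\lambda)$ and restriction to $G_r T$ preserves composition series, the irreducible $G_r T$-supermodule $\mathcal{L}_r(\mu)$ (respectively $\Pi \mathcal{L}_r(\mu)$) is a composition factor of $V(\lambda)|_{G_r T}$, and therefore of $\mathcal{Z}_r(\lambda)$. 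Applying Remark \ref{replacing} (the analog of Theorem \ref{linkageoverG_r T} for $\mathcal{Z}_r(\lambda)$) I obtain the required chain $\lambda = \lambda_0, \lambda_1, \ldots, \lambda_t = \mu$ with each step either an odd link $\lambda_i = \lambda_{i-1} - \alpha_i$ with $\alpha_i \in \Phi^+$, $p(\alpha_i)=1$, and $p \mid (\lambda_{i-1}+\rho,\alpha_i)$, or an even $(\alpha_i, r)$-link with $\alpha_i \in \Phi^+$, $p(\alpha_i)=0$.

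The main obstacle lies in the middle step: verifying that for $r$ sufficiently large, $V(\lambda)|_{G_r T}$ is indeed an epimorphic image of $\mathcal{Z}_r(\lambda)$, and that the $(G_r T)$-composition factors of $V(\lambda)|_{G_r T}$ correspond to the $G$-composition factors of $V(\lambda)$ in a way compatible with the parity shift. This requires the filtration $Dist(G) = \bigcup_s Dist(G_s T)$ together with the finite-dimensionality of $V(\lambda)$, and uses crucially that the restriction $L(\mu)|_{G_r T}$ is already irreducible for $\mu \in X_r(T)^+$, so that composition factors of $V(\lambda)$ are not fragmented upon restriction.
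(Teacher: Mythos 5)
Your proof is correct and takes essentially the same route as the paper's: choose $r$ large enough that $\lambda,\mu\in X_r(T)^+$ and that $V(\lambda)$, being a finite-dimensional quotient of $M(\lambda)$, is already a quotient of $\mathcal{Z}_r(\lambda)$, then conclude with Remark \ref{replacing}. The paper packages the quotient step via the direct-limit description of $M(\lambda)$ in Lemma \ref{Verma} rather than via the universal property of $\mathcal{Z}_r(\lambda)$, but this is the same observation.
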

\begin{proof} By Lemma 4.1 (iii) of \cite{brunkuj}, $V(\lambda)=M(\lambda)/N$ for an appropriate supermodule $N$. 
Using Lemma \ref{Verma}, there is a sufficiently large positive integer $r$ such that $\lambda, \mu\in X_r(T)^+$ and 
$V(\lambda)\simeq\mathcal{Z}_r(\lambda)/(\mathcal{Z}_r(\lambda)\cap N)$. The claim then follows using Remark \ref{replacing}.
\end{proof}
\begin{rem}\label{notdominant}
Weights $\lambda_i$ from Proposition \ref{stronglinkageforGL} are not necessarily dominant.
\end{rem}

The blocks in the category of $G_{res}=GL(m)\times GL(n)$-modules can be described as follows.

Let $\lambda$ be a (not necessarily dominant) weight. Define the {\it defect} of $\lambda$ as a pair
$d(\lambda)=(d_+(\lambda)\mid d_-(\lambda))$, where
\[d_+(\lambda)=\max\{d\geq 0\mid (\lambda+\rho_0, (\epsilon_i-\epsilon_j)^{\vee})\equiv 0\!\!\!\!\pmod{p^d} \text{ for } 1\leq i< j\leq m\},\] 
\[d_-(\lambda)=\max\{d\geq 0\mid (\lambda+\rho_0, (\epsilon_i-\epsilon_j)^{\vee})\equiv 0\!\!\!\!\pmod{p^d} \text{ for }  m+1\leq i< j\leq m+n\}.\]

Denote by $B_{ev}(\lambda)$ that block in the category of $G_{res}=GL(m)\times GL(n)$-modules which contains $\lambda\in X(T)^+$. Denote
\[p^{d(\lambda)+(1\mid 1)}\mathbb{Z}\Phi=\sum_{1\leq i\neq j\leq m}p^{d_+(\lambda)+1}\mathbb{Z}(\epsilon_i-\epsilon_j) +\sum_{m+1\leq i\neq j\leq m+n}p^{d_-(\lambda)+1}\mathbb{Z}(\epsilon_i-\epsilon_j)\]
and the dot action $W=S_m\times S_n$ on $X(T)$ as $w.\mu=w(\mu+\rho_0)-\rho_0$. 
It follows from \cite{jan}, II.7.2(3) that $B_{ev}(\lambda)=(W.\lambda+p^{d(\lambda)+(1\mid 1)}\mathbb{Z}\Phi)\cap X(T)^+$.

We call dominant weights $\lambda$ and $\mu$ {\it even-linked} if $B_{ev}(\lambda)=B_{ev}(\mu)$. From the above description it follows that the defects of even-linked weights coincide.

Next, we will derive a few simple results to rectify the deficiency in Proposition \ref{stronglinkageforGL} and show that we can replace weights $\lambda_i$ by dominant weights.

Let $\lambda$ be a (not necessarily dominant) weight. A {\it companion} of $\lambda$ is any dominant weight $\mu$ such that $d(\mu)=d(\lambda)$ and
\[W.\lambda+p^{d(\lambda)+(1\mid 1)}\mathbb{Z}\Phi =W.\mu+p^{d(\lambda)+(1\mid 1)}\mathbb{Z}\Phi .\]
If $\lambda$ is dominant, then $\lambda$ and $\mu$ are obviously even-linked.
\begin{lm}\label{companion}
Any weight $\lambda$ has a companion.
\end{lm}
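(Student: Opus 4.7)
The plan is to first use the dot action of $W=S_m\times S_n$ to bring $\lambda$ into a shape that is nearly dominant, and then to adjust by a suitable element of $p^{d(\lambda)+(1\mid 1)}\mathbb{Z}\Phi$ to obtain full dominance without disturbing the defect. Before doing anything else, I would establish two invariance facts. First, the defect is preserved by the dot action: since $w.\lambda+\rho_0=w(\lambda+\rho_0)$ and $W$ permutes the positive even roots inside each of the two blocks, $(w.\lambda+\rho_0,(\epsilon_i-\epsilon_j)^{\vee})=(\lambda+\rho_0,w^{-1}(\epsilon_i-\epsilon_j)^{\vee})$ ranges over the same multiset in each block as for $\lambda$, so $d_{\pm}(w.\lambda)=d_{\pm}(\lambda)$. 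Second, adding any element $\gamma\in p^{d_+(\lambda)+1}\mathbb{Z}\Phi_{GL(m)}\oplus p^{d_-(\lambda)+1}\mathbb{Z}\Phi_{GL(n)}$ preserves the defect, since $(\gamma,(\epsilon_i-\epsilon_j)^{\vee})$ is an integer multiple of $p^{d_{\pm}(\lambda)+1}$, so the residues of the pairings modulo $p^{d_{\pm}(\lambda)+1}$ stay the same and the minimum $p$-adic valuation across pairings is unchanged.

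Next, I would pick $w\in W$ so that $w(\lambda+\rho_0)$ has weakly decreasing entries in each of the blocks $\{1,\ldots,m\}$ and $\{m+1,\ldots,m+n\}$, and set $\mu_0=w.\lambda=w(\lambda+\rho_0)-\rho_0$. Because $(\rho_0)_i-(\rho_0)_{i+1}=1$ for $i\neq m$, the weight $\mu_0$ is ``almost dominant'' in the sense that $(\mu_0)_i-(\mu_0)_{i+1}\geq -1$ within each block, with failure of dominance only at positions where the reordered $\lambda+\rho_0$ has two consecutive equal entries.

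To repair dominance I would add to $\mu_0$ an explicit staircase element from the adjustment lattice. Observe that $p^{d_+(\lambda)+1}\mathbb{Z}\Phi_{GL(m)}$ consists precisely of the integer vectors with entry sum zero and with every coordinate divisible by $p^{d_+(\lambda)+1}$. Choosing $\gamma_+=q_+p^{d_+(\lambda)+1}\sum_{i=1}^{m}(m-2i+1)\epsilon_i$ for a sufficiently large positive integer $q_+$, and similarly $\gamma_-$ for the second block, one has $(\gamma_+)_i-(\gamma_+)_{i+1}=2q_+p^{d_+(\lambda)+1}$, which dominates any deficit of at most $1$ in $\mu_0$. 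Hence $\mu=\mu_0+\gamma_++\gamma_-$ is strictly decreasing within each block, so dominant; by the two invariances $d(\mu)=d(\lambda)$, and by construction $\mu\in W.\lambda+p^{d(\lambda)+(1\mid 1)}\mathbb{Z}\Phi$, which is exactly the companion condition.

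The main delicate point I anticipate is the degenerate case in which $d_+(\lambda)$ or $d_-(\lambda)$ is infinite, since then the corresponding summand of the adjustment lattice collapses to zero and no adjustment is available in that block; in such a case $\lambda$ is forced to have the shape $\lambda+\rho_0$ constant on the affected block, and the statement either has to be proved separately by direct inspection or, more naturally, deduced from the observation that such weights do not occur among the intermediate terms produced by Proposition \ref{stronglinkageforGL}. All remaining steps are routine linear-algebraic manipulations over $\mathbb{Z}$.
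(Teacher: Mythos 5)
Your proof is correct and its core move is the same as the paper's: repair dominance by adding a large ``staircase'' element of $p^{d_+(\lambda)+1}\mathbb{Z}\Phi_{GL(m)}\oplus p^{d_-(\lambda)+1}\mathbb{Z}\Phi_{GL(n)}$, which lies in the adjustment lattice and therefore changes neither the defect nor the coset $W.\lambda+p^{d(\lambda)+(1\mid 1)}\mathbb{Z}\Phi$. The only difference is that the paper dispenses with your preliminary Weyl-sorting step: it writes $\lambda$ in the fundamental-weight basis, lets $A$ be the largest deficit $|\lambda_i-\lambda_{i+1}|$ among the negative differences, and adds $p^t(\pi_++\pi_-)$ (corrected on $\epsilon_m,\epsilon_{m+n}$ to have block-sum zero) with $t>d_{\pm}(\lambda)$ and $p^t>A$, so the staircase absorbs an arbitrary deficit rather than one of size at most $1$. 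Your closing worry about $d_{\pm}(\lambda)=\infty$ (i.e.\ $\lambda+\rho_0$ constant on a block of size $\geq 2$) is legitimate and is not addressed in the paper either --- the choice ``$t>d_{\pm}(\lambda)$'' tacitly assumes finiteness --- so your remark that this degenerate case must be excluded or handled via the provenance of the weights in Proposition \ref{stronglinkageforGL} is a fair observation rather than a defect of your argument.
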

\begin{proof}
Set $\omega_i=\sum_{1\leq k\leq i}\epsilon_k$ for $1\leq i\leq m$ and $\omega_j=\sum_{m+1\leq k\leq j}\epsilon_k$ for $m+1\leq j\leq m+n$. Then 
\[\lambda=\sum_{1\leq i <m}(\lambda_i-\lambda_{i+1})\omega_i +\lambda_m\omega_m +
\sum_{m+1\leq j < m+n}(\lambda_j-\lambda_{j+1})\omega_j +\lambda_{m+n}\omega_{m+n}.\]
If $\lambda$ is not dominant, then there is $i< m$ or $j < m+n$ such that $\lambda_i-\lambda_{i+1}< 0$ or $\lambda_j-\lambda_{j+1}< 0$. 
Let $A$ be the maximal number of all absolute values of such negative differences. Choose a positive integer $t$ such that $t> d_{\pm}(\lambda)$ and $p^t > A$ and denote  
$\sum_{1\leq i< m}\omega_i$ by $\pi_+$ and $\sum_{m+1\leq j< m+n}\omega_j$ by $\pi_-$. 

Then the required companion of $\lambda$ is
\[\mu=\lambda +p^t\pi_+ +p^t\pi_- -p^t|\pi_+|\epsilon_m-p^t|\pi_-|\epsilon_{m+n}.\]
\end{proof}
\begin{cor}\label{allcompanionsareevenlinked}
Any companion of $\mu$ from $W.\lambda+p^{d(\lambda)+(1|1)}\mathbb{Z}\Phi$ is even-linked with any companion of $\lambda$. In particular, $d(\mu)=d(\lambda)$.
\end{cor}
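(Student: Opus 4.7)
The plan is to first establish the auxiliary claim $d(\mu)=d(\lambda)$; the even-linkage of companions then follows formally from the formula $B_{ev}(\nu)=(W.\nu+p^{d(\nu)+(1|1)}\mathbb{Z}\Phi)\cap X(T)^+$ recorded just before Lemma \ref{companion}, so the main technical content is stability of the defect under the shift.

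By hypothesis I may write $\mu=w.\lambda+p^{d_+(\lambda)+1}\gamma_++p^{d_-(\lambda)+1}\gamma_-$, where $w\in W=S_m\times S_n$, $\gamma_+$ is an integer combination of the roots $\epsilon_i-\epsilon_j$ with $1\leq i\neq j\leq m$, and $\gamma_-$ is the analog for $m+1\leq i\neq j\leq m+n$. For any coroot $\alpha^{\vee}=(\epsilon_i-\epsilon_j)^{\vee}$ with $1\leq i<j\leq m$ the key computation is
\[
(\mu+\rho_0,\alpha^{\vee})=(w(\lambda+\rho_0),\alpha^{\vee})+p^{d_+(\lambda)+1}(\gamma_+,\alpha^{\vee})=(\lambda+\rho_0,w^{-1}\alpha^{\vee})+p^{d_+(\lambda)+1}(\gamma_+,\alpha^{\vee}),
\]
where $(\gamma_-,\alpha^{\vee})=0$ by disjointness of the $\epsilon$-supports. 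Since $W$ permutes the set $\{(\epsilon_i-\epsilon_j)^{\vee}\mid 1\leq i\neq j\leq m\}$, the first summand is divisible by $p^{d_+(\lambda)}$, hence so is $(\mu+\rho_0,\alpha^{\vee})$, giving $d_+(\mu)\geq d_+(\lambda)$. Rearranging the same identity yields $\lambda=w^{-1}.\mu-p^{d_+(\lambda)+1}w^{-1}\gamma_+-p^{d_-(\lambda)+1}w^{-1}\gamma_-$, with $w^{-1}\gamma_{\pm}$ still lying in the corresponding block-lattices by $W$-invariance; if $d_+(\mu)$ were strictly greater than $d_+(\lambda)$, substituting this expression into $(\lambda+\rho_0,\alpha^{\vee})$ would force $p^{d_+(\lambda)+1}$ to divide $(\lambda+\rho_0,\alpha^{\vee})$ for every such $\alpha$, contradicting the maximality in the definition of $d_+(\lambda)$. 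Hence $d_+(\mu)=d_+(\lambda)$, and the symmetric argument in the other block gives $d_-(\mu)=d_-(\lambda)$, so $d(\mu)=d(\lambda)$.

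With the defects equal, the rearrangement above also shows that $\lambda$ lies in $W.\mu+p^{d(\mu)+(1|1)}\mathbb{Z}\Phi$, and therefore
\[
W.\lambda+p^{d(\lambda)+(1|1)}\mathbb{Z}\Phi=W.\mu+p^{d(\mu)+(1|1)}\mathbb{Z}\Phi.
\]
For any companions $\lambda'$ of $\lambda$ and $\mu'$ of $\mu$, the defining equalities of these sets for companions, combined with the formula for $B_{ev}$, then deliver $B_{ev}(\lambda')=B_{ev}(\mu')$, i.e., $\lambda'$ and $\mu'$ are even-linked. The only point that needs some care is the $W$-invariance of each block-lattice, which is immediate because $W=S_m\times S_n$ permutes $\{1,\ldots,m\}$ and $\{m+1,\ldots,m+n\}$ separately; I do not expect a serious obstacle beyond this bookkeeping, so the bulk of the argument is just the two-line defect computation followed by an appeal to the explicit description of $B_{ev}$.
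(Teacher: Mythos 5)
Your argument is correct, and it is organized differently from the paper's. The paper's proof is a one-liner: construct one particular companion $\mu'$ of $\mu$ by the recipe of Lemma \ref{companion}, taking the parameter $t$ larger than $d_{\pm}(\lambda)$ as well as $d_{\pm}(\mu)$, so that $\mu'-\mu\in p^{d(\lambda)+(1\mid 1)}\mathbb{Z}\Phi$ and hence the dominant weight $\mu'$ lies in $W.\lambda'+p^{d(\lambda)+(1\mid 1)}\mathbb{Z}\Phi$, whose dominant part is $B_{ev}(\lambda')$; this gives the even-linkage of $\mu'$ with $\lambda'$ (and of any other companions, since all companions of a fixed weight determine the same $B_{ev}$), and $d(\mu)=d(\lambda)$ is then read off from the earlier remark that even-linked weights have equal defects. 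You instead establish $d(\mu)=d(\lambda)$ first, by the direct two-sided divisibility computation on $(\mu+\rho_0,\alpha^{\vee})$ in each block, and only afterwards pass to the coset equality and to the description of $B_{ev}$. Your route is more self-contained: the paper's remark that even-linked weights have equal defects is stated without proof, and your divisibility argument is essentially the verification of it; moreover you treat an arbitrary companion of $\mu$ uniformly instead of reducing to the specific one built in Lemma \ref{companion}. What the paper's route buys is brevity, by reusing the explicit construction already at hand. Both arguments ultimately rest on the two facts you isolate, namely the $W$-stability of the lattice $p^{d+(1\mid 1)}\mathbb{Z}\Phi$ and the coset description of $B_{ev}$; your bookkeeping (disjointness of the $\epsilon$-supports, $W$ permuting each block of coroots up to sign) is accurate, and the only degenerate case, $m=1$ or $n=1$, where the relevant set of coroots in one block is empty and the maximality argument is vacuous, is harmless for both proofs.
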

\begin{proof}
Construct a companion $\mu'$ of $\mu$ as in Lemma \ref{companion} such that $t$ is bigger than $d_{\pm}(\lambda)$. 
Then $\mu'\in W.\lambda+p^{d(\lambda)+(1|1)}\mathbb{Z}\Phi=W.\lambda'+p^{d(\lambda)+(1|1)}\mathbb{Z}\Phi$, where $\lambda'$ is a companion of $\lambda$.
\end{proof}
Following \cite{sdoty}, for a given weight $\lambda$ and a positive even root $\alpha$ we define a {\it lower} $p^e$-reflection $R_{\alpha, e}$ as 
$s_{\alpha, ap^e}$, where $(\lambda+\rho_0, \alpha^{\vee})=ap^e+s$ and $0\leq s < p^e$. Since $s_{\alpha, m}$ acts on $X(T)$ by the rule
$s_{\alpha, m}(\mu)=s_{\alpha}.\mu+m\alpha, m\in\mathbb{Z}$, we obtain $R_{\alpha, e}(\lambda)=\lambda-s\alpha$. 
\begin{lm}\label{aboutDonkin'scriterion}
A companion of $R_{\alpha, e}(\lambda)$ is even-linked with a companion of $\lambda$.
\end{lm}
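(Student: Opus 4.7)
My plan is to reduce the claim to a single algebraic identity and then invoke Corollary \ref{allcompanionsareevenlinked}. Write $(\lambda+\rho_0, \alpha^{\vee}) = ap^e + s$ with $0 \leq s < p^e$, so that $R_{\alpha, e}(\lambda) = \lambda - s\alpha$ by the formula preceding the lemma. Using the dot action one computes $s_{\alpha}.\lambda = \lambda - (\lambda+\rho_0, \alpha^{\vee})\alpha$, and subtracting gives
\[R_{\alpha, e}(\lambda) = s_{\alpha}.\lambda + ap^e \alpha.\]
Since $\alpha$ is an even root, $s_{\alpha} \in W = S_m \times S_n$, so $s_{\alpha}.\lambda \in W.\lambda$, and the task reduces to showing $ap^e \alpha \in p^{d(\lambda)+(1|1)}\mathbb{Z}\Phi$.

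The verification of this lattice condition splits cleanly into two cases according to the size of $e$. Because $\alpha$ is a positive even root, it has the form $\epsilon_i - \epsilon_j$ either with $1 \leq i < j \leq m$ or with $m+1 \leq i < j \leq m+n$, so exactly one of the components of $d(\lambda) = (d_+(\lambda) \mid d_-(\lambda))$ is relevant; call it $d_\ast(\lambda)$. If $e \leq d_\ast(\lambda)$, then by the very definition of $d_\ast(\lambda)$ we have $p^e \mid (\lambda + \rho_0, \alpha^{\vee})$, which forces $s = 0$ and $R_{\alpha, e}(\lambda) = \lambda$; the conclusion is then trivial. Otherwise $e \geq d_\ast(\lambda) + 1$, hence $ap^e \in p^{d_\ast(\lambda)+1}\mathbb{Z}$, and therefore $ap^e \alpha \in p^{d_\ast(\lambda)+1}\mathbb{Z}\alpha$, which sits inside the appropriate summand of $p^{d(\lambda)+(1|1)}\mathbb{Z}\Phi$.

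Having placed $R_{\alpha, e}(\lambda)$ in the coset $W.\lambda + p^{d(\lambda)+(1|1)}\mathbb{Z}\Phi$, applying Corollary \ref{allcompanionsareevenlinked} with $\mu = R_{\alpha, e}(\lambda)$ concludes that any companion of $R_{\alpha, e}(\lambda)$ is even-linked with any companion of $\lambda$, which is the assertion of the lemma. I do not foresee any substantive obstacle; the argument is essentially the identification of $R_{\alpha, e}(\lambda)$ as a dot-affine translate of $\lambda$, combined with a divisibility count matching the defect. The only bookkeeping point is correctly choosing between $d_+(\lambda)$ and $d_-(\lambda)$ according to which diagonal block contains $\alpha$, after which the result falls out.
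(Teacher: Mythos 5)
Your proof is correct and follows essentially the same route as the paper: identify $R_{\alpha,e}(\lambda)$ as $s_\alpha.\lambda + ap^e\alpha$ with $e > d_\ast(\lambda)$ in the nontrivial case, place it in $W.\lambda + p^{d(\lambda)+(1\mid 1)}\mathbb{Z}\Phi$, and invoke Corollary \ref{allcompanionsareevenlinked}. The only cosmetic difference is that you split on the size of $e$ whereas the paper splits on whether $s=0$, which amounts to the same thing.
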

\begin{proof}
Assume first that $\alpha=\epsilon_i-\epsilon_j$, where $1\leq i<j\leq m$. 

If $\lambda=R_{\alpha, e}(\lambda)$, then the statement is trivial. Otherwise, $s>0$ and
since $(\lambda+\rho_0, \alpha^{\vee})\equiv 0 \!\pmod {p^{d_+(\lambda)}}$, we also have $e >d_+(\lambda)$. 
Thus $R_{\alpha, e}(\lambda)\in W.\lambda+p^{d(\lambda)+(1|1)}\mathbb{Z}\Phi$ and Corollary \ref{allcompanionsareevenlinked} concludes the proof.

The case $\alpha=\epsilon_i-\epsilon_j$, where $m+1\leq i<j\leq m+n$ is analogous.
\end{proof}
The weights $\lambda$ and $\mu$ are called {\it simply-odd-linked} if there is a positive odd root $\alpha$ such that $\mu=\lambda\pm\alpha$ and $p|(\lambda+\rho, \alpha)$.
If $\alpha=\epsilon_i-\epsilon_j$, then \[(\lambda+\rho,\alpha) =\lambda_i+\lambda_j+2m+1-i-j\] and $\lambda$ and $\lambda-\alpha$ are simply-odd-linked provided 
\[\lambda_i+\lambda_j+2m+1-i-j \equiv 0 \pmod p.\]

\begin{lm}\label{oddlinked}
Assume that $\lambda$ and $\mu$ are simply-odd-linked. Then there is a companion $\lambda'$ of $\lambda$ such that $p|(\lambda'+\rho, \alpha)$ and $\lambda'-\alpha$ is a companion of $\mu$.
\end{lm}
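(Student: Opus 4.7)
The plan is to combine the explicit companion construction from Lemma \ref{companion} with a careful choice of the parameter $t$ so that a single shift suffices for both $\lambda$ and $\mu$. Without loss of generality assume $\mu = \lambda - \alpha$; the opposite sign $\mu = \lambda + \alpha$ reduces to this case after interchanging the roles of $\lambda$ and $\mu$, since $(\alpha, \alpha) = 0$ for an odd root implies $(\mu + \rho, \alpha) = (\lambda + \rho, \alpha)$.

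Set $\lambda' := \lambda + p^t v$ with
\[
v = \pi_+ - |\pi_+|\epsilon_m + \pi_- - |\pi_-|\epsilon_{m+n},
\]
the shift appearing in the proof of Lemma \ref{companion}. The central observation is that $v$ expands as
\[
v = \sum_{1 \le k < m}(m-k)(\epsilon_k - \epsilon_m) + \sum_{m+1 \le k < m+n}(m+n-k)(\epsilon_k - \epsilon_{m+n}),
\]
a non-negative integer combination of \emph{even} positive roots, so it lies in the lattice generated by the even roots. Choose $t$ large enough that (a) $p^t$ exceeds every absolute value of negative consecutive-coordinate differences of both $\lambda$ and $\lambda - \alpha$ within each block, and (b) $t > \max\{d_+(\lambda), d_-(\lambda), d_+(\mu), d_-(\mu)\}$.

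The three required properties now follow. First, $\lambda'$ is a companion of $\lambda$: Lemma \ref{companion} supplies dominance and $d(\lambda') = d(\lambda)$, while $\lambda' - \lambda = p^t v$ lies in $p^{d(\lambda)+(1|1)}\mathbb{Z}\Phi$ because $v$ is in the even-root lattice and $t \ge d_{\pm}(\lambda) + 1$. Second, $(\lambda' + \rho, \alpha) = (\lambda + \rho, \alpha) + p^t(v, \alpha)$, so $p \mid (\lambda' + \rho, \alpha)$ from the hypothesis. Third, $\lambda' - \alpha = \mu + p^t v$ is dominant: subtracting $\alpha = \epsilon_i - \epsilon_j$ perturbs $\lambda'$ only in coordinates $i$ and $j$ by $\pm 1$, and condition (a) makes the consecutive differences of $\lambda'$ in each block at least $1$, so dominance survives this $\pm 1$ perturbation. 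The same identity together with condition (b) shows that $\lambda' - \alpha$ belongs to the coset $\mu + p^{d(\mu)+(1|1)}\mathbb{Z}\Phi$, and a routine $p$-adic valuation comparison modulo $p^{d_{\pm}(\mu)+1}$ forces $d(\lambda' - \alpha) = d(\mu)$.

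The main obstacle is arranging dominance of both $\lambda'$ and $\lambda' - \alpha$ simultaneously while preserving the defects and the correct cosets at both $\lambda$ and $\mu$. What makes this possible is precisely that the shift $v$ lies in the even-root lattice, so the same $p^t v$ is absorbed into every relevant even-root sublattice for $t$ large. Once $t$ is chosen that large, the only genuinely new content beyond Lemma \ref{companion} is the dominance check that consecutive coordinate differences of $\lambda'$ strictly dominate the $\pm 1$ perturbations introduced by subtracting $\alpha$, which is secured by condition (a).
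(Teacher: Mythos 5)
Your proposal is correct and follows essentially the same route as the paper: the paper's proof simply says to apply "the same trick as in Lemma \ref{companion}", i.e.\ to add a single shift $\pi=p^t v$ lying in $p^t$ times the even-root lattice, chosen with $t$ large enough to make both $\lambda+\pi$ and $\mu+\pi=\lambda'-\alpha$ dominant while preserving defects and cosets, which is exactly what you do. Your verification that $v$ lies in the even-root lattice, the simultaneous choice of $t$ for $\lambda$ and $\mu$, and the observation that $p\mid p^t(v,\alpha)$ are precisely the details the paper leaves implicit.
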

\begin{proof}
Using the same trick as in Lemma \ref{companion} one can find an element $\pi\in p^{t|t}\mathbb{Z}\Phi$ such that $\lambda'=\lambda+\pi$ is a companion of $\lambda$ and $\mu+\pi=\lambda'-\alpha$ is a companion of $\mu$. 
\end{proof}

Now we are ready to prove the linkage principle for $GL(m|n)$.

\begin{pr}\label{achain}
If dominant weights $\lambda$ and $\mu$ are linked, then there is a chain of dominant weights 
$\lambda=\lambda_0, \lambda_1, \ldots, \lambda_s=\mu$ such that for each $1\leq i\leq s-1$ either $\lambda_i$ and $\lambda_{i+1}$ are even-linked or 
$\lambda_i$ and $\lambda_{i+1}$ are simply-odd-linked. 
\end{pr}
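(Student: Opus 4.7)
The plan is to reduce to a single $\mathrm{Ext}^1$ step, invoke Proposition \ref{stronglinkageforGL}, and then push the resulting possibly non-dominant chain into $X(T)^+$ using the companion machinery of Lemmas \ref{companion}, \ref{aboutDonkin'scriterion}, \ref{oddlinked} together with Corollary \ref{allcompanionsareevenlinked}.

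First I would reduce to the single-step case. By the definition of linkage there is a chain of dominant weights $\lambda = \nu_0, \nu_1, \ldots, \nu_\ell = \mu$ for which, at each stage, either $\mathrm{Ext}^1_G(L(\nu_{j-1}), L(\nu_j)) \neq 0$ or $\mathrm{Ext}^1_G(L(\nu_j), L(\nu_{j-1})) \neq 0$. Concatenating dominant chains produced pair by pair, it suffices to prove the conclusion when $\lambda$ and $\mu$ are joined directly by a non-trivial $\mathrm{Ext}^1$. Since $G$-$\mathsf{SMod}$ is a highest weight category in the sense of \cite{brunkuj}, a non-zero $\mathrm{Ext}^1$ between simple supermodules forces one highest weight to strictly dominate the other and forces the smaller simple to occur as a composition factor of the Weyl supermodule of the larger. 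Swapping names if necessary, I may therefore assume $\mu < \lambda$ and that $L(\mu)$ (or $\Pi L(\mu)$) is a composition factor of $V(\lambda)$.

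Next I would invoke Proposition \ref{stronglinkageforGL} to produce a sequence $\lambda = \lambda_0, \lambda_1, \ldots, \lambda_t = \mu$ of (not necessarily dominant) weights whose consecutive steps are of two kinds: an \emph{odd} step $\lambda_i = \lambda_{i-1} - \alpha$ with $\alpha \in \Phi^+$, $p(\alpha)=1$, $p \mid (\lambda_{i-1}+\rho, \alpha)$; or an \emph{even} step in which $\lambda_i$ is obtained from $\lambda_{i-1}$ by a lower $p^e$-reflection $R_{\beta,e}$ associated to some even positive root $\beta$. The key step is to replace this chain by a chain inside $X(T)^+$.

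For this, I pick companions $\lambda_i' \in X(T)^+$ of each $\lambda_i$ via Lemma \ref{companion}, taking $\lambda_0' = \lambda$ and $\lambda_t' = \mu$ (their own companions, being already dominant). For an even step, Lemma \ref{aboutDonkin'scriterion} asserts that a companion of $R_{\beta, e}(\lambda_{i-1})$ is even-linked to a companion of $\lambda_{i-1}$, and Corollary \ref{allcompanionsareevenlinked} guarantees that any two companions of the same weight are even-linked; together these force $\lambda_{i-1}'$ and $\lambda_i'$ to be even-linked. For an odd step, Lemma \ref{oddlinked} supplies a companion $\widetilde{\lambda}_{i-1}$ of $\lambda_{i-1}$ with $p \mid (\widetilde{\lambda}_{i-1}+\rho, \alpha)$ such that $\widetilde{\lambda}_{i-1}-\alpha$ is a companion of $\lambda_i$. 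Inserting the pair $\widetilde{\lambda}_{i-1},\ \widetilde{\lambda}_{i-1}-\alpha$ between $\lambda_{i-1}'$ and $\lambda_i'$ produces three transitions whose middle link is simply-odd-linked by construction and whose outer two links are even-linked by Corollary \ref{allcompanionsareevenlinked}. Concatenating these segments yields the required chain of dominant weights.

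The main obstacle is precisely this passage from the strong linkage chain of Proposition \ref{stronglinkageforGL}, which lives in $X(T)$ and can leave $X(T)^+$, to a chain inside $X(T)^+$ without spoiling the clean even/odd classification of steps; the companion apparatus is designed exactly for this, and once one grants the interpolations provided by Corollary \ref{allcompanionsareevenlinked} the remainder is a bookkeeping argument over the two step types.
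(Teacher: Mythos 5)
Your proposal is correct and follows essentially the same route as the paper: invoke Proposition \ref{stronglinkageforGL} (unpacking the even steps via Definition 1 of \cite{sdoty} into lower $p^e$-reflections) and then replace every term of the resulting chain by a companion using Lemmas \ref{aboutDonkin'scriterion} and \ref{oddlinked}. The only difference is that you spell out the preliminary reduction from the $\mathrm{Ext}^1$-definition of linkage to composition factors of Weyl supermodules via the highest weight category structure, a step the paper leaves implicit.
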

\begin{proof}
By our Proposition \ref{stronglinkageforGL} and Definition 1 from \cite{sdoty}, there is a chain
$\lambda=\lambda_0, \lambda_1, \ldots, \lambda_s=\mu$ such that either $\lambda_i$ and $\lambda_{i+1}$ are simply-odd-linked or one of these two weights is obtained from the other one by the action of the reflection $R_{\alpha, e}$. Using Lemma \ref{aboutDonkin'scriterion} and Lemma \ref{oddlinked} we can replace all members of the above chain by their companions and the claim follows.    
\end{proof}
Let $\approx$ denote an equivalence on $X(T)^+$ such that $\lambda\approx\mu$ if and only if $\lambda$ and $\mu$ can be connected by a chain from Proposition \ref{achain}. 
Proposition \ref{achain} implies that each block $B(\lambda)$ is contained in the equivalence class $\approx(\lambda)$ of $\lambda$. 

Lemma 5.3 of \cite{marko} states that if dominant weights $\lambda$ and $\mu$ are even-linked, then they belong to the same block of $G$. 
Therefore to prove that the equivalence class of $\approx(\lambda)$ is contained in the block $B(\lambda)$, we only need to show that if $\lambda$ and $\mu$ are odd-linked, then they belong to the same block. The rest of the paper is devoted to proving this statement.

If the weights $\lambda$ and $\mu$ are simply-odd-linked, the 
either $\lambda=\mu-\alpha$ belongs to $H^0(\mu)$ or $\mu=\lambda-\alpha$ belongs to $H^0(\lambda)$. We will assume the latter since the former case is analogous.

The $G_{ev}$-structure of the induced supermodule $H^0(\lambda)$ has been studied in \cite{marko}. The main tool there was the identification of $H^0(\lambda)$ with 
$H^0_{ev}\otimes \Lambda(Y)$ for a suitable supermodule $Y$ (see \S 1.2 of \cite{marko}). The $G_{ev}$-module $H^0(\lambda)$ is a direct sum of floors 
$F_k=H^0_{ev}\otimes \Lambda^k(Y)$ for $k=0, \ldots, mn$. There is a $G_{ev}$-morphism $\phi_1:F_1 \to F_1$ (see  \S 3.5 of \cite{marko}) such that its image is spanned by
$(w)_{ij}D$ for $1\leq i\leq m<j\leq m+n$, where $w\in H^0_{ev}(\lambda)$ and $_{ij}D$ are odd superderivation corresponding to elements of $Dist(V^+)$. 
\begin{lm}\label{ql1}
The image $\phi_1(F_1)$ of the map $\phi_1$ is $\langle F_0 \rangle_G \cap F_1$.
\end{lm}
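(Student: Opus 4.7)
The plan is to treat the two inclusions separately. The inclusion $\phi_1(F_1) \subseteq \langle F_0 \rangle_G \cap F_1$ should be immediate from the description of $\phi_1(F_1)$ recalled just above the statement: every spanning element $(w){}_{ij}D$ is the distribution action of ${}_{ij}D \in Dist(V^+) \subseteq Dist(G)$ on $w \in F_0 = H^0_{ev}(\lambda)$, hence lies in $Dist(G) \cdot F_0 = \langle F_0 \rangle_G$ (the equality following from $G$ being connected, cf.\ Lemma 9.4 of \cite{zub3} and Proposition 3.4 of \cite{zubgrish}), and sits inside $F_1$ by construction.

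For the reverse inclusion I would exploit a PBW-type triangular decomposition. From $Lie(G) = Lie(V^-) \oplus Lie(G_{ev}) \oplus Lie(V^+)$, together with the bounded-exponent basis argument used in the proof of Lemma \ref{decompositions}, one obtains a superspace isomorphism
\[ Dist(G) \cong Dist(V^-) \otimes Dist(G_{ev}) \otimes Dist(V^+). \]
Given $v \in \langle F_0 \rangle_G \cap F_1$, I would write $v = \sum_k \psi_k\, \phi_k\, \chi_k\, w_k$ with $\psi_k \in Dist(V^-)$, $\phi_k \in Dist(G_{ev})$, $\chi_k \in Dist(V^+)$, and $w_k \in F_0$. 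Since $F_0$ is $G_{ev}$-stable one has $\phi_k w_k \in F_0$, so after renaming this becomes $v = \sum_k \psi_k\, \chi_k\, w_k'$ with $w_k' \in F_0$.

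The next step tracks how odd distributions shift the floor index in the identification $H^0(\lambda) \simeq H^0_{ev}(\lambda) \otimes \Lambda(Y)$ from \cite{marko}, \S 1.2. Since $Dist(V^+) \cong \Lambda(Lie(V^+))$ and its action corresponds to multiplication by odd generators in the $K[V^+]$-picture of $\Lambda(Y)$, a degree-$s$ element of $Dist(V^+)$ sends $F_0$ into $F_s$. For the $Dist(V^-)$-factors I would commute each $\psi_k$ to the right past $\chi_k$ using the super-commutation $[Lie(V^-),Lie(V^+)] \subseteq Lie(G_{ev})$ (so that, via Lemma \ref{apropertyofadj}, the $G_{ev}$-residues can be absorbed back into $F_0$) and iterate. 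Projecting onto $F_1$ and using $v \in F_1$, only the terms with $\chi_k$ of exterior degree one and floor-preserving $\psi_k$-contribution survive, yielding $v = \sum_{1 \leq i \leq m < j \leq m+n}({}_{ij}D)\, u_{ij}$ for certain $u_{ij} \in F_0$, which by definition lies in $\phi_1(F_1)$.

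The main obstacle is precisely the bookkeeping of the $Dist(V^-)$-contributions in $F_1$: unlike $Dist(V^+)$, the subalgebra $Dist(V^-)$ does not preserve $F_0$ and its action interacts nontrivially with the floor decomposition. The work consists of systematically rewriting each $\psi_k$-contribution as a combination of degree-one $Dist(V^+)$-elements acting on $F_0$, modulo floor-preserving terms, using the explicit action formulas of \cite{marko}, \S 1.2 together with iterated super-commutation relations in $gl(m|n)$.
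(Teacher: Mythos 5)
Your first inclusion is fine and agrees with the paper. The issue is the reverse inclusion, which is where the content of the lemma lies and which your proposal does not actually establish. You choose the PBW normal form $Dist(V^-)\otimes Dist(G_{ev})\otimes Dist(V^+)$, so that in a monomial $\psi_k\phi_k\chi_k$ the floor-raising odd factor $\chi_k$ acts on $w_k\in F_0$ first and the opposite odd factor $\psi_k$ acts last, on an element of a higher floor. With this ordering you are forced to ``commute each $\psi_k$ to the right past $\chi_k$'' and to control the resulting contributions to $F_1$ --- and you explicitly flag this bookkeeping as the main obstacle without carrying it out. As written, the claim that only the terms with $\chi_k$ of exterior degree one survive is an assertion, not a proof: a term $\psi_k\chi_k w_k'$ with $\deg\chi_k=s\geq 2$ and $\psi_k$ of complementary degree $s-1$ also lands in $F_1$, and showing that such a term lies in $\phi_1(F_1)$ is precisely the unproved point. (There is also a smaller slip: in $\psi_k\phi_k\chi_k w_k$ the even factor $\phi_k$ is not adjacent to $w_k$, so you cannot replace $\phi_k w_k$ by an element of $F_0$ without first moving $\phi_k$ past $\chi_k$; you would need the ordering $Dist(V^-)\otimes Dist(V^+)\otimes Dist(G_{ev})$, or the fact that each floor $F_s$ is $G_{ev}$-stable, to justify that reduction.)

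The fix --- and what the paper does --- is to choose the opposite PBW ordering, so that the odd factor which annihilates $F_0$ is the one acting first, the even factor (which preserves $F_0$) acts second, and the floor-raising odd factor acts last. Then every PBW monomial applied to $F_0$ either vanishes immediately or is a product of raising generators applied to $F_0$; a monomial with exactly $k$ raising generators sends $F_0$ into $F_k$, so $\langle F_0\rangle_G$ is graded by floors and its $F_1$-component is exactly the span of single raising generators applied to $F_0$, i.e.\ $\phi_1(F_1)$, with no re-commutation needed. Your approach, if pushed through, would simply re-derive this normal form term by term via iterated super-commutators; reordering the tensor factors at the outset removes the obstacle entirely.
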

\begin{proof}
%Since the image of $\phi_{1,k}$ is spanned by elements $(v)_{ij}D$ for $v\in S_{k,0}$, it is clear that $Im \, \phi_{1,k} \subset \langle S_{k,0} \rangle_G \cap S_{k,1}$.
Since the image $\phi_1(F_1)$ is the span of all elements $(w)_{ij}D$, where $w\in F_0$ and $1\leq i\leq m<j\leq m+n$, it is clear that
$\phi_1(F_1) \subset \langle F_0 \rangle_G \cap F_1$.

For the opposite inclusion, use the Poincare-Birkhoff-Witt theorem and order the elements of the distribution algebra $Dist(G)$ by listing elements from $Dist(V^+)$ first, followed by elements of $Dist(G_{ev})$ and finally by elements of $Dist(V^-)$. Every superderivation from $Dist(V^+)$ annihilates each element from $F_0$ and every superderivation from 
$Dist(G_{ev})$ sends elements from $F_0$ to itself. If we apply one superderivation from $Dist(V^-)$ to an element of $F_0$, the image lies in $\phi_1(F_1)$. 
If we apply more than one superderivation from $Dist(V^-)$ to an element of $F_0$, the image lies in the higher floors $F_k$ for $k>1$.
Therefore the opposite inclusion $\langle F_0 \rangle_G \cap F_1\subset \phi_1(F_1)$ is also valid.
\end{proof}

\begin{lm}\label{ql3}
Assume that the simple module $L_{ev}(\mu)$ is a composition factor of the module $F_1/Im \, \phi_1$. Then the simple supermodule $L(\mu)$ is a composition factor 
of $\langle F_0,F_1\rangle_G \subset H^0(\lambda)$.
\end{lm}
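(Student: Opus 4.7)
The plan is to find a $B^+$-primitive vector of weight $\mu$ in a suitable $G$-subquotient of $M := \langle F_0,F_1\rangle_G$, which will force $L(\mu)$ (up to a parity shift) to appear as a composition factor of $M$. Set $N := \langle F_0\rangle_G$. By Lemma \ref{ql1}, $N\cap F_1 = \mathrm{Im}\,\phi_1$, so the composition $F_1\hookrightarrow M\twoheadrightarrow M/N$ induces an embedding of $G_{ev}$-modules $F_1/\mathrm{Im}\,\phi_1\hookrightarrow M/N$. Hence $L_{ev}(\mu)$ occurs as a $G_{ev}$-composition factor of $M/N$. Choose $G_{ev}$-submodules $\mathrm{Im}\,\phi_1\subseteq U\subsetneq V\subseteq F_1$ with $V/U\cong L_{ev}(\mu)$, and pick $v\in V$ a weight-$\mu$ lift of a highest weight vector of $V/U$, so that $Dist(U^+_{ev})^+\cdot v\subseteq U$.

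From the proof of Lemma \ref{ql1} one has $V^+\cdot F_0 = 0$ and $V^-\cdot F_0\subseteq F_1$; more generally the odd subgroups $V^\pm$ shift the floor by $\mp 1$. In particular $V^+\cdot v\subseteq F_0\subseteq N$. Set $W' := \langle U\rangle_G + N$, a $G$-supersubmodule of $M$. In $M/W'$ the image $\bar v$ is killed by $Dist(U^+_{ev})^+$ (since $Dist(U^+_{ev})^+\cdot v\subseteq U\subseteq W'$) and by $V^+$ (since $V^+\cdot v\subseteq N\subseteq W'$); via the PBW decomposition $Dist(U^+) = Dist(U^+_{ev})\cdot Dist(V^+)$ this gives $Dist(U^+)^+\cdot\bar v = 0$, so $\bar v$ is $B^+$-primitive of weight $\mu$ in $M/W'$.

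The crux is to verify $\bar v\neq 0$, equivalently $v\notin W'$. Both $N$ and $\langle U\rangle_G$ are graded by floor: each is the $G$-submodule generated by a homogeneous subspace (in floor $0$ and floor $1$, respectively), and each monomial in $Dist(G)$, written via PBW in terms of $Dist(V^\pm)$ and $Dist(G_{ev})$, has a well-defined floor-shift. Thus $F_1\cap W' = (F_1\cap\langle U\rangle_G) + (F_1\cap N) = (F_1\cap\langle U\rangle_G) + \mathrm{Im}\,\phi_1$. To show $F_1\cap\langle U\rangle_G = U$ one inspects an arbitrary element of $\langle U\rangle_G$ sitting in floor $1$: it must come from $U$ by applying equally many $V^+$'s and $V^-$'s interleaved with $Dist(G_{ev})$. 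Using the supercommutation $V^+V^- + V^-V^+ = [V^+,V^-]\in Dist(G_{ev})$ together with $V^+\cdot U\subseteq F_0\subseteq N$ and $N\cap F_1 = \mathrm{Im}\,\phi_1\subseteq U$, each such composition reduces to an element of $U$; this floor-grading bookkeeping is the main technical obstacle.

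Granting $\bar v\neq 0$, the $G$-submodule of $M/W'$ generated by $\bar v$ is cyclic with a $B^+$-primitive generator of weight $\mu$, and therefore has $L(\mu)$ (up to a parity shift) as its unique irreducible quotient. Hence $L(\mu)$ is a composition factor of $M/W'$, and therefore of $M = \langle F_0,F_1\rangle_G$, as claimed.
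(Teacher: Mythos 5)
Your argument is correct and is essentially the paper's own proof: the paper likewise lifts a $G_{ev}$-filtration $\mathrm{Im}\,\phi_1\subseteq Q_1\subsetneq Q_2\subseteq F_1$ with $Q_2/Q_1\cong L_{ev}(\mu)$ to $G$-submodules $R_i=\langle F_0,Q_i\rangle_G$ (your $W'$ is exactly $R_1$) and proves $R_i\cap F_1=Q_i$ by the same PBW/floor-shift bookkeeping, concluding that $R_2/R_1$ is a highest weight module with head $L(\mu)$. The step you flag as the main obstacle is precisely the step the paper carries out, in the same way.
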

\begin{proof}
If $L_{ev}(\mu)$ is a composition factor of the module $F_1/Im \, \phi_1$, then there is a filtration 
\[Q_0=Im \, \phi_1 \subset Q_1 \subsetneq Q_2 \subset Q_3=F_1\]
such that $Q_2/Q_1\cong L_{ev}(\mu)$.

If we denote by $R_i$ the supermodule $\langle F_0, Q_i \rangle_G$ for $i=0, \ldots 3$, then using Lemma \ref{ql1} we obtain the corresponding filtration of supermodules
\[R_0=\langle F_0 \rangle_G \subset R_1 \subset R_2 \subset R_3=\langle F_0,F_1\rangle_G.\]

Use the Poincare-Birkhoff-Witt theorem and order elements of $Dist(G)$ in such a way that we elements from $Dist(G_{ev})$ come first, followed by 
elements from $Dist(V^+)$ and finally by elements from $Dist(V^-)$. 
We will show that $R_i\cap F_1 = Q_i$ for each . If we apply any superderivation from $Dist(G_{ev})$ to an element from $Q_i$, the image stays in $Q_i$ since $Q_i$ is an $G_{ev}$-module.
Application of a superderivation from $Dist(V^+)$ to an element of $Q_i\subset F_1$ gives an element in $F_0$. Finally, it follows from the definition of the map
$\phi_1$ that when we apply a superderivation from $Dist(V^-)$ to an element from $F_0$ we end up in $Im \phi_1=Q_0 \subsetneq Q_i$. The image of a compositions of superderivations from $Dist(V^-)$ to an element from $Q_i$ will lie in higher floors of $F_k$ for $k>1$. Therefore $R_i\cap F_1 = Q_i$. 

Consequently, \[R_0=\langle F_0 \rangle_G \subset R_1 \subsetneq R_2 \subset \langle F_0,F_1\rangle_G.\]
Since the superfactormodule $R_2/R_1$ is generated by the highest vector of weight $\mu$ and $(R_2/R_1) \cap F_1 \cong L_{ev}(\mu)$, 
we infer that $R_2/R_1$ contains the supermodule $L(\mu)$ as a composition factor and the claim follows.
\end{proof}

If $\alpha=\epsilon_i-\epsilon_j$ is an odd root, then $\lambda-\alpha$ is dominant implies that $\lambda_i >\lambda_{i+1}$ provided $1\leq i <m$ and 
$\lambda_i>0$ if $i=m$, and $\lambda_j>\lambda_{j+1}$ provided $m+1\leq j <m+n$.

Lemma 3.1 of \cite{marko} implies that the $G_{ev}$-module $F_1$ has a good filtration, in which factors are the modules 
$H^0_{ev}(\lambda-\beta)$, where $\beta$ is a positive odd root, and each dominant weight $\lambda-\beta$ appears with single multiplicity. 
Therefore all $G_{ev}$-primitive vectors in $F_1$ are scalar multiples of primitive elements $v_{\beta}$ of weight $\lambda-\beta$, where  
$v_{\beta}$ for $\beta=\epsilon_k-\epsilon_l$ was denoted by $\pi_{kl}$ in \cite{marko}. 

The last piece of the puzzle is revealed in the following theorem.

\begin{tr}\label{link1} Assume weights $\lambda, \mu$ are dominant, $\mu=\lambda-\alpha$, where $\alpha$ is a positive odd root and $\lambda$ is simply-odd-linked to $\mu$. 
Then $L(\mu)$ is a composition factor of $H^0(\lambda)$.
\end{tr}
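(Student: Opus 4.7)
The plan is to invoke Lemma \ref{ql3}: it suffices to prove that $L_{ev}(\mu)$ is a composition factor of the $G_{ev}$-module $F_1/\phi_1(F_1)$. Since $\mu = \lambda - \alpha$ is dominant by hypothesis, the good filtration of $F_1$ (recalled from Lemma 3.1 of \cite{marko} in the discussion preceding the statement) contains $H^0_{ev}(\mu)$ with multiplicity one; let $v_\alpha = \pi_{kl}$ (where $\alpha = \epsilon_k - \epsilon_l$, $1 \le k \le m < l \le m+n$) denote the corresponding distinguished primitive vector of weight $\mu$ in $F_1$.

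The core reduction is to show that $v_\alpha \notin \phi_1(F_1)$, so that $\bar v_\alpha$ is nonzero in $F_1/\phi_1(F_1)$. Granting this, $\bar v_\alpha$ is a nonzero $G_{ev}$-primitive vector of weight $\mu$ (as $\phi_1$ is $G_{ev}$-equivariant). It therefore generates a nonzero quotient of the Weyl module $V_{ev}(\mu)$, whose simple head is $L_{ev}(\mu)$; consequently $L_{ev}(\mu)$ appears as a composition factor of this quotient, hence of $F_1/\phi_1(F_1)$, and Lemma \ref{ql3} yields that $L(\mu)$ is a composition factor of $\langle F_0, F_1\rangle_G \subseteq H^0(\lambda)$.

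To verify $v_\alpha \notin \phi_1(F_1)$, I would compute the action of $\phi_1$ on $v_\alpha$ using the explicit definition of $\phi_1$ in \S 3.5 of \cite{marko}. The map $\phi_1$ arises essentially from a symmetrized composition of opposite odd superderivations ${}_{ij}D$ and ${}_{ji}D$; applied to the primitive vector $\pi_{kl}$ associated to $\alpha$, the super-commutator $[{}_{ji}D, {}_{ij}D]$ produces an element of $Dist(T)$ that evaluates on $v_\alpha$ to the weight-character scalar. A standard super-Casimir computation then shows that $\phi_1$ acts on $v_\alpha$ by the scalar $\pm(\lambda+\rho,\alpha)$ modulo lower layers of the good filtration of $F_1$. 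Under the simply-odd-linked hypothesis $p\mid(\lambda+\rho,\alpha)$, this scalar vanishes in $K$, so the projection of $\phi_1(F_1)$ onto the highest weight line $K v_\alpha$ in the $H^0_{ev}(\mu)$-subquotient is zero; hence $\bar v_\alpha$ is indeed nonzero.

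The main obstacle is this scalar computation: translating the definition of $\phi_1$ from \cite{marko} into its precise action on $\pi_{kl}$ involves careful sign bookkeeping, and one must check that contributions from the other layers $H^0_{ev}(\lambda - \beta)$ of the good filtration (for $\beta \neq \alpha$) cannot produce a vector of weight $\mu$ in $\phi_1(F_1)$ that agrees with $v_\alpha$ modulo those lower layers.
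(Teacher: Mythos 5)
Your overall skeleton is the right one (reduce via Lemma \ref{ql3} to showing that $L_{ev}(\mu)$ occurs in $F_1/\mathrm{Im}\,\phi_1$, and exploit the vanishing of the scalar $(\lambda+\rho,\alpha)$ on the primitive vector $v_\alpha=\pi_{kl}$), but there is a genuine gap at the pivotal step. The scalar computation you describe — which is exactly Lemma 3.6 of \cite{marko}, giving $\phi_1(v_\alpha)=(\lambda+\rho,\alpha)v_\alpha=0$ — only shows that $v_\alpha\in\ker\phi_1$. It does not show $v_\alpha\notin\mathrm{Im}\,\phi_1$: since $\phi_1$ is an endomorphism of $F_1$ rather than a map between two different modules, $v_\alpha$ could a priori be hit by some other vector in the weight space $(F_1)_\mu$ coming from the layers $H^0_{ev}(\lambda-\beta)$, $\beta\neq\alpha$. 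You flag precisely this as ``the main obstacle'' and leave it unresolved, so as written the argument is incomplete; moreover, even settling that obstacle would only tell you about the single vector $v_\alpha$, whereas what you ultimately need is a statement about composition factors of the cokernel.

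The paper sidesteps the obstacle entirely with a Grothendieck-group observation: for the endomorphism $\phi_1$ of the finite-dimensional $G_{ev}$-module $F_1$ one has $[F_1]=[\ker\phi_1]+[\mathrm{Im}\,\phi_1]=[\mathrm{Im}\,\phi_1]+[\mathrm{coker}\,\phi_1]$, hence $[\ker\phi_1]=[\mathrm{coker}\,\phi_1]$. Since $v_\alpha$ is a nonzero $G_{ev}$-primitive vector of weight $\mu$ lying in $\ker\phi_1$, the simple module $L_{ev}(\mu)$ is a composition factor of $\ker\phi_1$, and therefore also of $\mathrm{coker}\,\phi_1=F_1/\mathrm{Im}\,\phi_1$; Lemma \ref{ql3} then finishes the proof. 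If you want to salvage your route, replace the attempt to prove $v_\alpha\notin\phi_1(F_1)$ by this equality of classes in the Grothendieck group; no further analysis of $\phi_1$ on the other layers of the good filtration is needed.
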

\begin{proof}
Since $\mu$ is dominant, according to \cite{marko}, there is a $G_{ev}$-primitive element $v_{\alpha}\in H^0(\lambda)$ of weight $\mu$ which belongs to $F_1$. 

Denote the kernel of $\phi_1$ by $K_1$, its image by $I_1$ and its cokernel by $C_1$.
Denote by $[M]$ the element of the Grotendieck group corresponding to the $G_{ev}$-module $M$.
Since $[F_1]=[K_1]+[I_1]$, and $[F_1]=[I_1]+[C_1]$, we obtain that $[K_1]=[C_1]$; 
hence the $G_{ev}$-composition factors (and their multiplicities) of $C_1$ are identical to those of $K_1$.

By Lemma 3.6 of \cite{marko}, $\phi_1(v_{\alpha})=(\lambda+\rho,\alpha)v_{\alpha}=0$. Since $v_{\alpha}$ is a nonzero element of $K_1$, 
the module $L_{ev}(\mu)$ is a composition factor of $K_1$, and hence of $C_1$.
By Lemma \ref{ql3}, $L(\mu)$ is a composition factor of $\langle F_0,F_1\rangle_G$, hence that of $H^0(\lambda)$.
\end{proof}

We have proved our main theorem.

\begin{tr}\label{blocksoverGL}
Every block $B(\lambda)$ coincides with the equivalence class $\approx(\lambda)$.
\end{tr}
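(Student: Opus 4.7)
The plan is to deduce Theorem \ref{blocksoverGL} directly from the accumulated machinery. Proposition \ref{achain} already establishes the inclusion $B(\lambda)\subseteq \approx(\lambda)$, so I only need to prove the reverse inclusion $\approx(\lambda)\subseteq B(\lambda)$. By transitivity of the block relation, it suffices to verify that any two weights which are a single link of the chain, i.e.\ either even-linked or simply-odd-linked, belong to the same block of $G$. The even-linked case is handled by Lemma 5.3 of \cite{marko}, so the only remaining work is the simply-odd-linked case.

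For the simply-odd-linked case, assume $\lambda,\mu\in X(T)^+$ satisfy $\mu=\lambda\pm\alpha$ for a positive odd root $\alpha$ with $p\mid(\lambda+\rho,\alpha)$. After possibly swapping $\lambda$ and $\mu$, I may assume $\mu=\lambda-\alpha$; the other case follows by symmetry once one notes that $(\mu+\rho,\alpha)\equiv(\lambda+\rho,\alpha)\pmod p$, since $(\alpha,\alpha)=0$ for any odd root. Theorem \ref{link1} then gives that the simple supermodule $L(\mu)$ occurs as a composition factor of the induced supermodule $H^0(\lambda)$.

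The decisive step is to combine this with indecomposability of $H^0(\lambda)$. Recall that by the general principle at the end of Section 1, two simple $G$-supermodules that are composition factors of the same indecomposable supermodule lie in the same block. Hence it remains only to argue that $H^0(\lambda)$ is indecomposable, which I will do by showing its socle is simple. Using the (super) Frobenius reciprocity $\operatorname{Hom}_G(L,H^0(\lambda))\simeq\operatorname{Hom}_{B^+}(L|_{B^+},K_\lambda)$ together with the classification of irreducible $G$-supermodules by highest weight and parity, the only simple $G$-supermodule admitting a nonzero $B^+$-morphism into $K_\lambda$ is $L(\lambda)$, so the socle of $H^0(\lambda)$ is $L(\lambda)$. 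Any decomposition $H^0(\lambda)=M\oplus N$ would split the socle, which is impossible; thus $H^0(\lambda)$ is indecomposable, and $L(\lambda)$ and $L(\mu)$ lie in the same block, i.e.\ $\mu\in B(\lambda)$.

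The main obstacle I anticipate is nothing deep: it is the verification that $H^0(\lambda)$ really has simple socle $L(\lambda)$ in the supermodule category, where one has to be careful with parity shifts. Once this standard Frobenius-reciprocity argument is confirmed, the proof is complete: for any chain $\lambda=\lambda_0,\lambda_1,\dots,\lambda_t=\mu$ of dominant weights whose consecutive terms are even-linked or simply-odd-linked, each successive pair $\lambda_i,\lambda_{i+1}$ lies in the same block by the two cases above, and by transitivity $\lambda$ and $\mu$ belong to the same block. This yields $\approx(\lambda)\subseteq B(\lambda)$ and hence the theorem.
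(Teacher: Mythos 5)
Your proposal is correct and follows essentially the same route as the paper: Proposition \ref{achain} gives $B(\lambda)\subseteq\;\approx(\lambda)$, Lemma 5.3 of \cite{marko} handles even-linked pairs, and Theorem \ref{link1} combined with the indecomposability of $H^0(\lambda)$ (simple socle $L(\lambda)$, so two composition factors lie in one block) handles simply-odd-linked pairs. Your explicit remark that $(\lambda\pm\alpha+\rho,\alpha)\equiv(\lambda+\rho,\alpha)\pmod p$ because odd roots are isotropic is exactly the justification the paper leaves implicit when it says the case $\lambda=\mu-\alpha$ is "analogous."
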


Since the linkage principle for general linear supergroups is now established, the next natural problem we intent to consider in the future is the linkage principle for
orthosymplectic supergroups over the ground field of positive characteristic $p\neq 2$.

\end{document}